\newcommand\restr[2]{\ensuremath{\left.#1\right|_{#2}}}
\newtheorem{thm}{Theorem}
\newtheorem{df}[thm]{Definition}
\newtheorem{lema}[thm]{Lemma}
\newtheorem{dosledok}[thm]{Corollary}
\newcommand{\tDET}[2]{\mathrm{DET}_{#1}^{#2}}
\newcommand{\tdet}[2]{\mathrm{det}_{#1}^{#2}}
\newcommand\utdet{\underline{\mathrm{det}}}
\newcommand\otdet{\overline{\mathrm{det}}}
\newcommand{\trr}[2]{\mathrm{rr}_{#1}^{#2}}
\newcommand{\tRR}[2]{\mathrm{RR}_{#1}^{#2}}
\newcommand{\tc}[2]{\mathrm{c}_{#1}^{#2}}
\newcommand{\tC}[2]{\mathrm{C}_{#1}^{#2}}
\newcommand\ve{r}
\newcommand{\dis}{\varrho}
\newcommand{\ZZZ}{\mathbb{Z}}
\newcommand{\NNN}{\mathbb{N}}
\newcommand{\RRR}{\mathbb{R}}
\newcommand{\abs}[1]{\lvert #1 \rvert}
\newcommand{\diam}{\operatorname{diam}}
\newcommand{\lp}{+} 
\newcommand{\pp}{\oplus} 
\newcommand{\lm}{-}
\newcommand{\pminus}{\ominus}
\newcommand{\eper}{\textnormal{ePer}}
\newcommand{\cores}{\kappa}
\newcommand{\card}{\operatorname{card}}
\newcommand{\Orb}{\mathrm{Orb}}
\newcommand\id{\mathrm{id}}
\begin{document}

\title[Correlation integral and determinism for a family of $2^\infty$ maps]{Correlation integral and determinism \\for a family of $2^\infty$ maps}

\author{J. Majerov\'a}

\address{
Department of Mathematics, Faculty of Natural Sciences, Matej Bel University, Tajovsk\'eho 40, 97401 Bansk\'a Bystrica, Slovakia\newline \indent
Slovanet a.s., Z\'ahradn\'icka 151, 821 08 Bratislava, Slovakia}
\email{majerova.jana@yahoo.com}


\begin{abstract}
The correlation integral and determinism are quantitative characteristics of a dynamical system based on the recurrence of orbits.
For strongly non-chaotic interval maps, the determinism equals $1$ for every small enough threshold. This means that trajectories of such systems are perfectly predictable in the infinite horizon.
In this paper we study the correlation integral and determinism for the family of $2^\infty$ non-chaotic maps, first considered by Delahaye in 1980. The determinism in a finite horizon equals $1$. However, the behaviour of the determinism in the infinite horizon is counter-intuitive. Sharp bounds on the determinism are provided.
\end{abstract}

\keywords{adding machine; determinism; correlation integral}

\date{\today}

\maketitle


\section{Introduction}
\label{del:intro}

The correlation integral was first introduced in~\cite{GP-cor-sum} to measure a quantity of recurrences of trajectory. It is tightly connected with the correlation dimension and correlation entropy. For example, these characteristics are used in chaos theory and in time series analysis, see e.g.~\cite{KS-tsa}.

Let $g$ be a continuous map on a compact metric space $(M, \dis)$. Throughout the paper we consider the distance $\sup\{\dis(g^i(x), g^i(y))\ |\ 0\leq i < \ell\}$, denoted by $\dis_{\ell}(x, y)$, where $\ell\in \NNN\cup\{\infty\}$. For a fixed ergodic measure $\mu$ on $M$, the $\ell$--\emph{correlation integral} $\tc{\ell}{g}(\mu, \ve)$ for $\ve > 0$ is a measure of pairs of points which are $\ve$--close with respect to the metric $\dis_{\ell}$. Then $\tc{\ell_2|\ell_1}{g}(\mu, \ve) = \tc{\ell_2 + \ell_1}{g}(\mu, \ve)/\tc{\ell_1}{g}(\mu, \ve)$, where $1\leq\ell_1 < \infty$ and $1\leq\ell_2\leq\infty$, is a conditional probability. Moreover, it is a measure of partial predictability. Especially, $\tc{\infty|1}{g}(\mu, \ve)$ is a measure of total predictability. The natural question arises whether the predictability of a system can be maximal for all small enough $\ve$'s. This holds for $g$ being a contraction, an isometry, or whenever the support of $\mu$ is a periodic orbit. In contrary, if a map $g$ has sensitive dependence on initial conditions and $\mu$ is not atomic then $\tc{\infty|1}{g}(\mu, \ve)$ can be equal to zero for all small enough thresholds $\ve > 0$. Hence, we can say that this measure quantifies how sensitive a map is towards initial conditions on the support of $\mu$. We call it an \emph{$\infty$--asymptotic determinism} or an asymptotic determinism in the infinite horizon and denote it by $\tdet{\infty}{g}(\mu, \ve)$. Similarly, an $\ell$--\emph{asymptotic determinism} is defined as a linear combination of the conditional probabilities
\begin{align*}
&\tdet{\ell}{g}(\mu, \ve) = \ell\cdot \frac{\tc{\ell}{g}(\mu, \ve)}{\tc{1}{g}(\mu, \ve)} - (\ell - 1)\cdot \frac{\tc{\ell + 1}{g}(\mu, \ve)}{\tc{1}{g}(\mu, \ve)} = \ell\cdot \tc{\ell - 1|1}{g}(\mu, \ve) - (\ell - 1)\cdot \tc{\ell|1}{g}(\mu, \ve).
\end{align*}

The map $g_{1/3}$ goes back to the work of Delahaye, cf.~\cite{delahaye}. There has since been systematic work on this map together with its variations (e.g.~\cite{CK-delahaye}, \cite[p.~137]{Dev-book}, \cite{H-delahaye}). It is often presented as the simplest non-chaotic $2^\infty$ map. In this paper we study the $\ell$--correlation integral and the $\ell$--asymptotic determinism for a special family of interval maps denoted by $\{f_{\alpha}\}_{0 < \alpha < 1/2}$. Especially, $f_{1/3} = 1 - g_{1/3}\circ(1 - \id_{[0, 1]})$, e.g.~defined in \cite[Example~5.54]{Rue-chaos}. These maps are non-chaotic in the sense of Li and Yorke and conjugate to an isometry on the support of the unique non-atomic ergodic measure $\mu_{\alpha}$. Some properties of these imply the maximal predictability for every ergodic measure and for small enough $\ve$. Theorem~\ref{thm:determinizmus_ohranicenie_tretina} shows that the hypothesis is wrong pointing out the fact that for $\alpha > 1/3$ and the non-atomic ergodic measure $\mu_{\alpha}$, the predictability is never maximal. Moreover, for each $\underline{det}\in(1/3, 8/15]$, we can find $\alpha\in(0, 1/2)$ satisfying $\lim\inf_{\ve\to 0^+}\tdet{\infty}{f_{\alpha}}(\mu_{\alpha}, \ve) = \underline{det}$ (Corollary~\ref{cor:liminf}). However, by Theorem~\ref{thm:det_jeden}, measures of partial predictability $\tc{\ell|1}{g}(\mu_{\alpha}, \ve) = 1$ for all $\ve \leq \ve_{\ell}$.

By Lemma~\ref{lema:det_epsilon}, it is not necessary to study the asymptotic determinism in the infinite horizon for $\mu_{\alpha}$ on the interval $[0, 1]$. Since $\tdet{\infty}{f_{\alpha}}(\mu_{\alpha}, \ve) = \tdet{\infty}{f_{\alpha}}(\mu_{\alpha}, \alpha\cdot \ve)$ for each $\ve \leq (1 - 2\alpha)/\alpha$, all properties can be determined from $\tdet{\infty}{f_{\alpha}}(\mu_{\alpha}, [\alpha^h(1 - 2\alpha), \alpha^{h - 1}(1 - 2\alpha)])$ where $h > 0$ is such that $\alpha^{h - 1}(1 - 2\alpha) \leq 1$. In addition, it is not necessary to use the map $f_{\alpha}$ for the computation either. By Corollary~\ref{cor:rozdiel_ck_c}, for approximation with a desired accuracy, we can use a simpler map $f_{\alpha, k}$ and $f_{\alpha, k}$--ergodic measure $\mu_{\alpha, k}$. All points from the support of $\mu_{\alpha}$ are periodic for $f_{\alpha, k}$ and it is sufficient to investigate only a finite number of pairs of points which can be written into the matrix which contains the patterns simplifying the computation (Lemma~\ref{lema:postupnost_gulky}).

For all $x\in [0, 1]$, there is $f_{\alpha}$--ergodic measure $\mu_{\alpha, x}$ such that for every $\ell \leq \infty$ the limit of $\card\{(i, j)\ |\ 0\leq i, j < n, \dis_{\ell}(f_{\alpha}^i(x), f_{\alpha}^j(x))\leq \ve\}/n^2$ is the correlation integral $\tc{\ell}{f_\alpha}(\mu_{\alpha, x}, \ve)$ (Theorem~\ref{thm:ostatok_body}). Moreover, if $x = 0$, then the measure $\mu_{\alpha, 0}$ is the unique non-atomic ergodic measure $\mu_{\alpha}$ (Lemma~\ref{lema:c_l_limita}). The functions $\tc{\ell}{f_{\alpha}}(\mu_{x}, \ve)$ and $\tdet{\ell}{f_\alpha}(\mu_{x}, \ve)$ are not continuous in the universal measure $\mu_{\alpha}$. If $x$ is not eventually periodic then the correlation integrals and asymptotic determinisms are continuous with respect to the radius $\ve$ (Theorem~\ref{thm:prechod}) and parameter $\alpha$ (Lemma~\ref{lema:int_conti}).

\section{Preliminaries}
\label{del:preliminaries}
\label{del-addm}

Let a dynamical system $(M, g)$ be given where $M$ is a metric space with metric $\dis$. Define the metric $\dis_{\ell}$ by
\begin{align*}
\dis_{\ell}(x, y) = \sup\{\dis(g^i(x), g^i(y))\ |\ 0\leq i < \ell\}
\end{align*}
where $1\leq \ell\leq \infty$. The \emph{$\ell$--correlation sum} for $\ve > 0$ was first defined in~\cite{GP-cor-sum} as
\begin{equation}
\label{eq:cor_sum}
\tC{\ell}{g}(x, n, \ve) = \frac{1}{n^2}\card\{(i, j)\ |\ 0\leq i, j < n, \dis_{\ell}(g^i(x), g^j(x))\leq\ve\}.
\end{equation}
Similarly, the \emph{$\ell$--recurrence rate} and \emph{$\ell$--determinism} (notions of the \emph{recurrence quantification analysis}, cf.~\cite{Z-rqa}) are defined by
\begin{equation}
\label{eq:rec_sum}
\begin{split}
&\tRR{\ell}{g}(x, n, \ve) = \frac{1}{n^2}\card\{(i, j)\ |\ 0\leq i, j < n, \eta_{\ell}(i, j, \ve) = 1\},\\
&\tDET{\ell}{g}(x, n, \ve) = \frac{\tRR{\ell}{g}(x, n, \ve)}{\tRR{1}{g}(x, n, \ve)}
\end{split}
\end{equation}
where $\eta_{\ell}(i, j, \ve) = 1$, if there is $0\leq k\leq \min\{i, j, \ell - 1\}$ such that $\dis_{\ell}(g^{i - k}(x), g^{j - k}(x))\leq \ve$, and $h(i, j, \ve) = 0$ otherwise. If $\ell = \infty$, we define $\min\{i, j, \infty - 1\} = \min\{i, j\}$ for all $0\leq i, j < \infty$.

The next lemma shows that for the computation of determinism and recurrence rate it is sufficient to know only the correlation sums. Its first part was proved in~\cite{strong-laws}, hence only the case $\ell = \infty$ remains to prove.

\begin{lema}
\label{lema:korelacny_rekurent}
Let $(M, g)$ be a dynamical system where $M$ is a metric space. For all $n > 0, 1\leq \ell < \infty$, $x\in M$ and $\ve > 0$ the recurrence rate $\tRR{\ell}{g}(x, n, \ve) = \ell\cdot \tC{\ell}{g}(x, n, \ve) - (\ell - 1)\cdot \tC{\ell + 1}{g}(x, n, \ve)$. If $\ell = \infty$, so $\tRR{\infty}{g}(x, n, \ve) = \tC{\infty}{g}(x, n, \ve)$.
\end{lema}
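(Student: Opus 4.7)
The lemma has two parts: the finite-$\ell$ identity is attributed to \cite{strong-laws}, so only the case $\ell = \infty$ genuinely needs proof. My plan is to sketch a single combinatorial argument covering the finite case, then handle $\ell = \infty$ by a short direct manipulation of the suprema defining $\dis_\infty$.

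For the finite-$\ell$ identity I would set $R(a, b) = 1$ when $\dis(g^a(x), g^b(x)) \le \ve$ and $R(a, b) = 0$ otherwise, viewing the data as a binary array indexed by pairs of nonnegative integers. On each diagonal $a - b = d$, the $1$'s decompose into \emph{maximal diagonal line segments} (maximal runs of consecutive recurrences). The key translation is that $\dis_\ell(g^i(x), g^j(x)) \le \ve$ iff $(i, j)$ is the start of a length-$\ell$ run of $1$'s on diagonal $i - j$, and $\eta_\ell(i, j, \ve) = 1$ iff $(i, j)$ lies inside some length-$\ell$ run of $1$'s on that diagonal. Consequently both $n^2\tC{\ell}{g}(x, n, \ve)$ and $n^2\tRR{\ell}{g}(x, n, \ve)$ become sums of contributions indexed by maximal segments. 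A maximal segment of length $m$ fully inside the window of consideration contributes $\max(0, m - \ell + 1)$ starts to $n^2\tC{\ell}{g}$, $\max(0, m - \ell)$ starts to $n^2\tC{\ell + 1}{g}$, and $m$ covered points to $n^2\tRR{\ell}{g}$ when $m \ge \ell$ (and $0$ otherwise). The identity then reduces to the per-segment algebraic equality
\[
\ell\cdot\max(0, m - \ell + 1) - (\ell - 1)\cdot\max(0, m - \ell) \;=\; \begin{cases} m & \text{if } m \ge \ell, \\ 0 & \text{if } m < \ell, \end{cases}
\]
which one verifies by checking the cases $m < \ell$, $m = \ell$, and $m > \ell$ separately; summing over all maximal segments yields the first formula. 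The main subtlety, which is what \cite{strong-laws} pins down, is the boundary accounting for segments whose support straddles the edge of $[0, n)^2$: the three counts must truncate compatibly for the per-segment identity to survive the restriction.

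For $\ell = \infty$ I would argue directly that $\eta_\infty(i, j, \ve) = 1$ iff $\dis_\infty(g^i(x), g^j(x)) \le \ve$; summing this equivalence over $(i, j) \in [0, n)^2$ yields the identity. Unfolding the supremum, $\dis_\infty(g^a(x), g^b(x)) \le \ve$ says that $\dis(g^{a + s}(x), g^{b + s}(x)) \le \ve$ for every $s \ge 0$. If $\eta_\infty(i, j, \ve) = 1$ is witnessed by some $k \in [0, \min\{i, j\}]$, then $\dis(g^{i - k + s}(x), g^{j - k + s}(x)) \le \ve$ for every $s \ge 0$; restricting to $s \ge k$ and reindexing $s' = s - k$ gives $\dis(g^{i + s'}(x), g^{j + s'}(x)) \le \ve$ for every $s' \ge 0$, which is exactly $\dis_\infty(g^i(x), g^j(x)) \le \ve$. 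The converse is immediate by taking $k = 0$. Hence $\eta_\infty$ coincides with the indicator of $\dis_\infty(g^i(x), g^j(x)) \le \ve$, and the $\ell = \infty$ identity follows.
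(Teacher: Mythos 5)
Your treatment of the only part that actually requires proof here, the case $\ell = \infty$, is correct and coincides with the paper's argument: the converse direction is the trivial choice $k = 0$, and the forward direction follows because a witness $k$ for $\eta_\infty(i,j,\ve) = 1$ gives $\dis_\infty(g^{i}(x), g^{j}(x)) \le \ve$ by discarding the first $k$ terms of the supremum. The finite-$\ell$ identity is, as you note, delegated to the cited reference in the paper as well, so your segment-decomposition sketch is a correct but optional supplement.
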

\begin{proof}
By definition, $\tC{\infty}{g}(x, n, \ve)\leq \tRR{\infty}{g}(x, n, \ve)$. For $\ell = \infty$, consider the pair $(i, j)$ with the property that $\dis_{\infty}(g^{i - k}(x), g^{j - k}(x))\leq \ve$ for some $0\leq k\leq \min\{i, j\}$. Therefore $\dis_{\infty}(g^{i}(x), g^{j}(x))\leq \ve$. It follows that $\tRR{\ell}{g}(x, n, \ve)\leq \tC{\ell}{g}(x, n, \ve)$.
\end{proof}

From Lemma~\ref{lema:korelacny_rekurent},
\begin{equation}
\label{eq:det_nekonecno}
\begin{aligned}
&\tDET{\ell}{g}(x, n, \ve) = \frac{\ell\cdot \tC{\ell}{g}(x, n, \ve) - (\ell - 1)\cdot \tC{\ell + 1}{g}(x, n, \ve)}{\tC{1}{g}(x, n, \ve)}\quad \text{for}\ 1\leq\ell < \infty,\\
&\tDET{\infty}{g}(x, n, \ve) = \frac{\tC{\infty}{g}(x, n, \ve)}{\tC{1}{g}(x, n, \ve)}.
\end{aligned}
\end{equation}

In this paper we consider the generalized rotated version of Delahaye's $g_{1/3}$ (cf.~\cite{delahaye}), namely $f_{\alpha}$. In special case $f_{1/3} = 1 - g_{1/3}\circ(1 - \id_{[0, 1]})$. Maps $f_{\alpha}$ are usually defined by sequences of maps. But for us it is more convenient to work with an exact definition.

\begin{df}
\label{df:del}
Let $\alpha\in(0, 1/2)$ and $k\geq 1$. Then $f_\alpha$ is defined by
\begin{eqnarray*}
f_{\alpha}(x) = \left\{
\begin{array}{ll}
x - 1 + 2\alpha^{j - 1} - \alpha^j                                                              & x\in[1 - \alpha^{j - 1}, 1 - \alpha^{j - 1} + \alpha^j], j\geq 1,\\
\frac{1 - \alpha + \alpha^2}{2\alpha - 1}(x - 1) + \alpha^{j + 1}\frac{2 - \alpha}{2\alpha - 1} & x\in(1 - \alpha^{j - 1} + \alpha^{j}, 1 - \alpha^{j}), j\geq 1,\\
0																								& x = 1.
\end{array}
\right.
\end{eqnarray*}
We call the approximation map $f_{\alpha, k}: [0, 1]\to [0, 1]$ a continuous map equal to $f_{\alpha}$ on the interval $[0, 1 - \alpha^{k - 1} + \alpha^k]$, it is equal to $x - 1 + \alpha^k$ on the interval $[1 - \alpha^k, 1]$, and it is linear elsewhere. Define the function $f_0: [0, 1]\to [0, 1]$ by $f_0(x) = x$.
\end{df}

\begin{figure}[!ht]
    \centering
        \includegraphics[width = 1\textwidth]{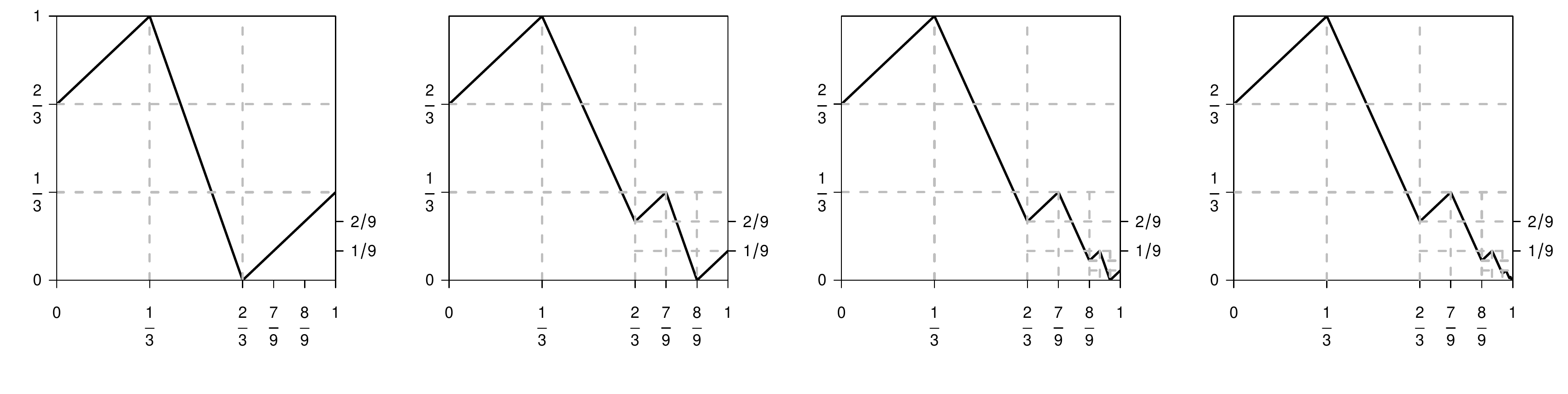}
    \caption[Examples of maps $f_\alpha, f_{\alpha, k}$]{From the left to the right $f_{1/3, 1}, f_{1/3, 2}, f_{1/3, 3}, f_{1/3}$.}
    \label{fig:delahaye}
\end{figure}

If $\alpha \in (0, 1/2)$ is known and there is no uncertainty about its value, then we write $f$ and $f_k$ instead of $f_{\alpha}$ and $f_{\alpha, k}$. For brevity, we omit $\alpha$ when there is no risk of confusion.

From now on, $\alpha$ and $\ve$ are always assumed to be fixed, $\alpha\in (0, 1/2)$ and $\ve\in(0, 1]$, unless stated otherwise (e.g.~we study the correlation integral as a function of these arguments).

\subsection{Adding Machine}

In this section, we provide some information about the adding machine and its connection with the dynamics of $f_{\alpha}$ and $f_{\alpha, k}$ restricted to their attractors. For a more abstract approach see e.g.~\cite{M-add-mach}.

Put $\Sigma = \{0,1\}$ and let $1\leq k \leq\infty$. To simplify the notation, we set $\infty + c = \infty$ for every $c\in\RRR$. The members $u$ of $\Sigma^k$ are called \emph{words} of length $\abs{u}=k$. For $u\in\Sigma^k$ and $1\leq i\leq j < k + 1$, denote $u_i^j=u_i u_{i+1}\ldots u_j$. If there is an ambiguity about the length of words, e.g~we use words of different lengths, we write $u^{(k)}, u_1^k\in\Sigma^k$. The \emph{concatenation} of words is understood as usually. By $0^k$, $1^k$ we denote the words $00\dots 0$, $11\dots 1$ from $\Sigma^k$. Symbols $v^{(0)}$, $v_1^0$, $1^0$, $0^0$ all denote the word of the length zero, the \emph{empty word}. It has the following properties: 
\begin{itemize}
	\item $u^{(0)} u^{(k)} = u^{(k)}$ for every word $u^{(k)}$ of the length $k\geq 1$ and
	\item $u^{(k)} u^{(0)} = u^{(k)}$ for every word $u^{(k)}$ of the length $k < \infty$. 
\end{itemize}	
We denote the set containing the empty word by $\Sigma^0$. For $u\in \Sigma^k$ and $v\in \Sigma^m$ where $v = u_1^m$ we write $u\succeq v$ and we say that $u$ \emph{begins with} $v$ if $m < k + 1$ and if $v = u_1^m$. 

For $1 \leq k \leq \infty$, denote the \emph{addition} from the left to the right on $\Sigma^k$ by $\lp$; e.g.~$100 \lp 110 = 001$.

We are led to the following lemma which is well known, e.g.~\cite{BC-1d}.

\begin{lema}
\label{lema:grupa}
For each $1\leq k\leq\infty$, $(\Sigma^k, \lp)$ is a cyclic group. If $k < \infty$, the group is isomorphic with $(\ZZZ_{2^k}, +)$ and generated by $1 0^{k - 1}$. $(\Sigma^\infty, \lp)$ is isomorphic with $(\ZZZ, +)$ and it is generated by $1 0^\infty$.
\end{lema}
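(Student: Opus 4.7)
My overall approach is to recognise $\lp$ as little-endian binary addition. If a word $u_1 u_2 \cdots u_k \in \Sigma^k$ is interpreted as the binary numeral $\sum_{i=1}^{k} u_i \, 2^{i-1}$ (reduced modulo $2^k$ when $k < \infty$), then the rule ``add left to right with the carry passed one position to the right, discarding any carry that would overflow past position $k$'' is precisely the schoolbook binary-addition algorithm applied to those numerals. This reduces both claims to routine verifications built around explicit isomorphisms.

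For $1 \le k < \infty$, I would define
\[
\phi_k : \Sigma^k \longrightarrow \ZZZ_{2^k}, \qquad \phi_k(u) = \sum_{i=1}^{k} u_i \, 2^{i-1} \pmod{2^k}.
\]
Bijectivity is just uniqueness of length-$k$ binary expansions of residues modulo $2^k$. The homomorphism identity $\phi_k(u \lp v) = \phi_k(u) \lp \phi_k(v)$ is then a direct inspection of the carry-propagation rule defining $\lp$ against the grade-school addition algorithm. The group structure on $\Sigma^k$ is inherited from $\ZZZ_{2^k}$, and since $\phi_k(1 0^{k-1}) = 1$ generates $\ZZZ_{2^k}$, the word $1 0^{k-1}$ generates $(\Sigma^k, \lp)$.

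For $k = \infty$ my plan is analogous: define $\phi_\infty : \ZZZ \to \Sigma^\infty$ inductively by $\phi_\infty(0) = 0^\infty$, $\phi_\infty(n+1) = \phi_\infty(n) \lp 10^\infty$, and extend to negative $n$ by $\lp$-inverses. Since no overflow position exists on $\Sigma^\infty$, the bit-level inspection of $\lp$ shows $\phi_\infty$ is a group homomorphism, and injectivity follows because distinct integers already have distinct length-$k$ little-endian binary representations once $k$ is large enough relative to $|n|$. The generator claim is immediate from $\phi_\infty(1) = 1 0^\infty$.

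The main technical obstacle I anticipate is in the infinite case: before one can speak of $(\Sigma^\infty, \lp)$ as a group at all, one must verify that $\lp$ is a well-defined total binary operation — that is, that for any two infinite words the carry propagation at each coordinate stabilises to produce a well-defined output bit. I would handle this by induction on bit position, reading off each output coordinate once the incoming carry from the previous coordinate has been computed. Once this point is settled, the group axioms (associativity, identity $0^\infty$, inverses) and the cyclic structure are transported along $\phi_\infty$ with no further work.
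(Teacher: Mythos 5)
The paper offers no proof of this lemma at all --- it is stated as ``well known'' with a citation to Block--Coppel --- so the only question is whether your argument is correct. Your finite case is fine: $\phi_k(u)=\sum_{i=1}^k u_i 2^{i-1} \bmod 2^k$ is a bijection, $\lp$ is little-endian addition with overflow discarded, and $\phi_k(10^{k-1})=1$ generates $\ZZZ_{2^k}$. You are also right to flag, and correctly dispose of, the well-definedness of $\lp$ on infinite words (each output bit depends only on finitely many input positions because carries travel rightward).

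The genuine gap is in the infinite case: your map $\phi_\infty:\ZZZ\to\Sigma^\infty$ is not surjective, and your closing claim that ``the cyclic structure is transported along $\phi_\infty$ with no further work'' silently assumes that it is. The image of $\phi_\infty$ is exactly the set of eventually constant words ($u0^\infty$ for $n\geq 0$ and $u1^\infty$ for $n<0$), a countable subset of the uncountable set $\Sigma^\infty$. The full group $(\{0,1\}^{\NNN},\lp)$ is the group of $2$-adic integers, which is not cyclic and not isomorphic to $(\ZZZ,+)$; the subgroup generated by $10^\infty$ is only dense in it. So the literal statement of the lemma cannot be proved by your route (or any other); what is true, and what the paper actually uses downstream --- note its later remark that $\{w\lp n\ |\ n\in\ZZZ\}$ is the whole of $\Sigma^k$ only for $k<\infty$ --- is that the subgroup generated by $10^\infty$ is infinite cyclic, equivalently that the $\ZZZ$-action $u\mapsto u\lp n$ is free. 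Your injectivity argument does establish that $n\mapsto 0^\infty\lp n$ is one-to-one (for $n\geq 0$ the images end in $0^\infty$ with distinct finite prefixes; for $n<0$ they end in $1^\infty$), so your proof, once you delete the surjectivity claim, proves the corrected statement rather than the stated one. You should say so explicitly instead of asserting an isomorphism onto all of $\Sigma^\infty$.
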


For $u\in\Sigma^k$ and $n\in\ZZZ$ we abbreviate $u \lp n\cdot 10^{k-1}$ by $u \lp n$. We will write the inverse element of $u = u_1^k$, $1\leq k\leq \infty$, as $\lm u$, i.e.~for such a $u$ and $\lm u$ we have $u \lp (\lm u) = 0^k$. Clearly, $\lm u = v \lp 1$ where $v_i = 1 - u_i$. For $u\in\Sigma^k$, where $k < \infty$ or $u = v^{(m)}0^\infty$ with $m < \infty$, there exists $n < 2^k$ such that $0^k \lp n = u$. In fact, if $u = u_1^m 0^{k - m}$, $0\leq m < k + 1$, then
\begin{align}
\label{eq:iteracia}
n = u_1\cdot 1 + u_2\cdot 2 + \ldots + u_j \cdot 2^{j - 1} + \ldots + u_m\cdot 2^{m - 1}.
\end{align}
This can be easily checked by induction. For $w\in\Sigma^k$ and $v\in \{w \lp n\ |\ n\in\ZZZ\}$ (note that this set is the whole $\Sigma^k$ for $k < \infty$), we denote the unique integer $0\leq n < 2^k$ by $w \lm v$ satisfying $w = v\lp n$.

Let $0\leq k\leq \infty$. The dynamical system $(\Sigma^k, g)$, where $g^n(u) = u \lp n$ for $n\in\ZZZ$ and $u\in\Sigma^k$, is called the \emph{adding machine} (or \emph{odometer}). Later we write $(\Sigma^k, \lp)$ instead of $(\Sigma^k, g)$. Let $u_1^k, v_1^k\in\Sigma^k$ be such that $u_1^h = v_1^h$ and $u_{h + 1} \neq v_{h + 1}$ for some $0\leq h < k$. Then $(u \lp 1)_1^h = (v \lp 1)_1^h$ and $(u \lp 1)_{h + 1} \neq (v \lp 1)_{h + 1}$, therefore the adding machine with the metric defined by $d(u_1^k, v_1^k) = \max(\{0\}\cup\{1/2^i\ |\ 1\leq i\leq k, u_i\neq v_i\})$ is an isometry. From now on we make the assumption that this metric defines the topology on $\Sigma^k$, i.e.~sets of words of the length $k$ beginning with the same word form a basis for topology on $\Sigma^k$.

Let $1\leq k < \infty$ and $u = u_1^k \in\Sigma^k$. Sometimes it is more comfortable to think about orders of words instead of their dyadic codes. We write $\gamma(u_1^k) = 1 + u_k\cdot 1 + u_{k - 1}\cdot 2 + \ldots + u_j\cdot 2^{k - j} + \ldots u_1\cdot 2^{k - 1}$. For $u, v\in\Sigma^k$ we thus write $u < v$ whenever it holds that $\gamma(u) < \gamma(v)$. For $k = \infty$ we write $u^{(\infty)} < v^{(\infty)}$ if there is $0 < h < \infty$ with $u_1^{h} < v_1^{h}$. If $u\neq v$, such an $h$ always exists. The relations $\leq, >, \geq$ on the words are defined similarly.

We define $\pp$ to be the addition from the right to the left on $\Sigma^k$ where $1\leq k < \infty$, e.g.~$100 \pp 110 = 010$. The pair $(\Sigma^k, \pp)$ is a cyclic group with the identity element $0^k$ and the inverse element $\pminus u = v \pp 0^{k - 1} 1$ where $v_i = (1 - u_i)$ for all $1\leq i\leq k$. For $n\in\NNN$, we write $u \pp n = u \pp n\cdot 0^{k - 1}1$. Let $0\leq n < \gamma(v)$, then $v\pminus n$ is defined as the unique word $u\in\Sigma^k$ such that $u\pp n = v$.

Let the map $\cores^{\alpha}: \Sigma^\infty\to [0, 1]$ be given by
\begin{align*}
\cores^{\alpha}(u_1 u_2\ldots ) = \cores(u_1 u_2 \ldots) = (1 - \alpha)\cdot(u_1 + u_2\cdot\alpha + u_3\cdot\alpha^2 + \ldots + u_k\cdot\alpha^{k - 1} + \ldots).
\end{align*}
It can be easily checked that $\cores$ is injective for all $\alpha$. If $k < \infty$ and $u\in\Sigma^k$, then we mean $\cores(u 0^\infty)$ by $\cores(u)$.

Let $u, v\in \Sigma^k$; $\hat{u}, \hat{v}\in\Sigma^m$,  and $1\leq k, m < \infty$. We now prove that
\begin{align}
\label{eq:iff}
\gamma(u) < \gamma(v) \Leftrightarrow \gamma(u\hat{u}) < \gamma(v\hat{v}) \Leftrightarrow \cores(u\hat{u}) < \cores(v\hat{v}) \Leftrightarrow \cores(u) < \cores(v).
\end{align}
Suppose that $\gamma(u) < \gamma(v)$. Obviously, we now have $u\neq v$. Then, there are $h\geq 0$ and the words $u^{(h)}\in\Sigma^h$, $u^{(k - h - 1)}, v^{(k - h - 1)}\in\Sigma^{k - h - 1}$ such that $u = u^{(h)} 0 u^{(k - h - 1)}$ and $v = u^{(h)} 1 v^{(k - h - 1)}$. It is always true that $\alpha^{k - h} > \alpha^{k - h + 1} + \alpha^{k - h + 2} + \ldots + \alpha^k$, hence $\cores(u) < \cores(v)$. Since the proof does not depend on $k$ or on the words $u^{(k - h - 1)}, v^{(k - h - 1)}$, we can assume that $u = u^{(h)} 0 u^{(k - h - 1)}\hat{u}$ and $v = u^{(h)} 1 v^{(k - h - 1)} \hat{v}$. We can use the same arguments if we assume that $\cores(u) < \cores(v)$. The equivalences~(\ref{eq:iff}) are now proved. If $u, v\in\Sigma^\infty$, then neither $\gamma(u)$ and $\gamma(v)$ nor $u\hat{u}$, $v\hat{v}$ are defined. However, we can use the same arguments to prove that
\begin{align}
\label{eq:iffinf}
u < v \Leftrightarrow \cores(u) < \cores(v).
\end{align}

Let $u = u_1^k, v = v_1^k \in\Sigma^k$. Denote by $d_E$ the Euclidean metric on $[0, 1]$. Therefore we can define the distance of words on $\Sigma^k$ by
\begin{align*}
\dis^\alpha(u, v) = \dis(u, v) = d_E(\cores(u), \cores(v)) = (1 - \alpha)\left| \sum_{i = 1}^{k}(u_i - v_i)\alpha^{i - 1}\right|.
\end{align*}

\begin{lema}
\label{lema:f_a_sigma}
For every $u\in\Sigma^\infty$ and $n\geq 1$,
\begin{align*}
f_{\alpha}^n(\cores(u)) = \cores(u \lp n).
\end{align*}
\end{lema}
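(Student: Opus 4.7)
The plan is to reduce to $n=1$ by induction (the inductive step is immediate: $f_\alpha^n(\cores(u)) = f_\alpha(\cores(u \lp (n-1)))$ by the inductive hypothesis, and the result follows once one knows $f_\alpha(\cores(v)) = \cores(v \lp 1)$ for $v := u \lp (n-1)$, since $(u \lp (n-1)) \lp 1 = u \lp n$ in the group $(\Sigma^\infty, \lp)$). So the whole content is in the base case: for every $v \in \Sigma^\infty$,
\begin{equation*}
f_\alpha(\cores(v)) = \cores(v \lp 1).
\end{equation*}

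First I would dispose of the boundary word $v = 1^\infty$. Here $\cores(1^\infty) = (1-\alpha)\sum_{i \geq 0}\alpha^i = 1$ and $1^\infty \lp 1 = 0^\infty$, so both sides collapse to $f_\alpha(1) = 0 = \cores(0^\infty)$, the value prescribed by the third branch of Definition~\ref{df:del}.

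For any other $v \in \Sigma^\infty$, I would write $v = 1^m 0 w$ for the unique $m \geq 0$ and some $w \in \Sigma^\infty$; the left-to-right addition then propagates carries through positions $1, \dots, m$ and terminates at position $m+1$, yielding $v \lp 1 = 0^m 1 w$. Set $S := (1-\alpha)\sum_{i \geq m+2} v_i\, \alpha^{i-1}$, so that a direct summation of the defining series for $\cores$ gives
\begin{equation*}
\cores(v) = (1 - \alpha^m) + S \qquad\text{and}\qquad \cores(v \lp 1) = (1-\alpha)\alpha^m + S.
\end{equation*}
The geometric bound $0 \leq S \leq \alpha^{m+1}$ places $\cores(v)$ in the interval $[1 - \alpha^m,\, 1 - \alpha^m + \alpha^{m+1}]$, which is precisely the domain of the first branch of Definition~\ref{df:del} with index $j = m+1$. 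Plugging $\cores(v)$ into that branch,
\begin{equation*}
f_\alpha(\cores(v)) = \cores(v) - 1 + 2\alpha^m - \alpha^{m+1} = \alpha^m - \alpha^{m+1} + S = (1-\alpha)\alpha^m + S = \cores(v \lp 1),
\end{equation*}
which completes the induction.

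The only genuine obstacle is bookkeeping: one must verify that $\cores(v)$ never falls into a linear "gap" interval $(1 - \alpha^{j-1} + \alpha^j, 1 - \alpha^j)$. This is exactly what the bound $S \leq \alpha^{m+1}$ guarantees, and the lone edge case $w = 1^\infty$ (where $S = \alpha^{m+1}$ and $\cores(v)$ sits at the right endpoint of the shift interval) is harmless because the first and second branches of Definition~\ref{df:del} agree at that common endpoint, as one can check from the computation already carried out for the linear slope $\tfrac{1-\alpha+\alpha^2}{2\alpha-1}$. Reconciling the exponent conventions between the piecewise definition (indexed by $j \geq 1$) and the block structure $v = 1^m 0 w$ (indexed by $m \geq 0$) is the only other spot where care is needed.
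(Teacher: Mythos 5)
Your proof is correct and follows essentially the same route as the paper's: both reduce to $n=1$ by induction, dispose of $v = 1^\infty$ separately, decompose $v = 1^m 0 w$, verify that $\cores(v)$ lands in the closed interval $[1-\alpha^m, 1-\alpha^m+\alpha^{m+1}]$ covered by the first (shift) branch of Definition~\ref{df:del} with $j = m+1$, and compute. The only cosmetic differences are that you bound the tail $S$ directly by a geometric series where the paper invokes~(\ref{eq:iffinf}), and your worry about the edge case $w = 1^\infty$ is moot since that branch's domain is closed on the right.
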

\begin{proof}
If $u = 1^\infty$, then $u\lp 1 = 0^\infty$, $\cores(u) = 1$, $\cores(u \lp 1) = 0$ and $f_{\alpha}(1) = 0$.

Suppose that $u < 1^\infty$, then there is some $h\geq 0$ such that $u = 1^h 0 \hat{u}$ where $\hat{u}\in\Sigma^\infty$. Then $\cores(u) = 1 - \alpha^h + \alpha^{h + 1}\cdot\cores(\hat{u})$, $u \lp 1 = 0^h 1 \hat{u}$ and $\cores(u \lp 1) = \alpha^h\cdot(1 - \alpha) + \alpha^{h + 1}\cdot\cores(\hat{u})$. From~(\ref{eq:iffinf}),
\begin{align*}
1 - \alpha^h = \cores(1^h 0^\infty) \leq \cores(u) \leq \cores(1^h 0 1^\infty) = 1 - \alpha^h + \alpha^{h + 1}.
\end{align*}
Using Definition~\ref{df:del}, where $j = h + 1$, we can conclude that
\begin{align*}
f_{\alpha}(\cores(u))  	& = \cores(u) - 1 + 2\alpha^h - \alpha^{h + 1} = 1 - \alpha^h + \alpha^{h + 1}\cores(\hat{u}) - 1 + 2\alpha^h - \alpha^{h + 1} = \\
                        & = \alpha^h(1 - \alpha) + \alpha^{h + 1}\cores(\hat{u}) = \cores(u \lp 1).
\end{align*}
The general case here is proved with the induction.
\end{proof}

For every $v\in\Sigma^k$ there exists $n\geq 0$ satisfying $v 0^\infty = 0^\infty \lp n$. By Definition~\ref{df:del}, $f(\cores(w)) = f_k(\cores(w))$ for every $w = v 0^\infty < 1^k0^\infty$ and $f_k(\cores(1^{k}0^\infty)) = f_k(1 - \alpha^k) = 0 = \cores(0^\infty)$. Therefore, by Lemma~\ref{lema:f_a_sigma},
\begin{align}
\label{eq:fk_map}
f_k^n(\cores(v)) = \cores(v \lp n).
\end{align}
For $f_k$, it is thus sufficient to consider words of the length $k$ and vice versa.

Set $X_0 = [0, 1]$. For $1\leq k < \infty$, let $(I_{\alpha}(u))_{u\in\Sigma^k}$ be the system of compact subintervals of $[0, 1]$ defined by
\begin{equation}
\label{eq:interval}
I_{\alpha}(u) = I(u) = [\cores(u), \cores(u) + \alpha^k].
\end{equation}
Thus, the distance of words from $\Sigma^k$ is actually the distance of left points of intervals $I(u)$ and $I(v)$. Denote $X_{\alpha, k} = X_k = \bigcup_{u\in \Sigma^k} I(u)$. If $w\in\Sigma^0$, then $I(w) = X_0$.

For every $u\in\Sigma^k$ such that $u < 1^k$ and every $x\in I(u)$, we have $f(x) = f_k(x)$. For $x\in I(1^k)$, the equality need not hold, however $f(I(1^k)) = f_k(I(1^k)) = I(0^k)$. Since 
\begin{itemize}
	\item each $I(u)$ has the same length $\alpha^k$ and
	\item the slope of the restricted maps $\restr{f_k}{I(u)}$ equals $1$,
\end{itemize}
from~(\ref{eq:fk_map}) it follows that
\begin{equation}
\label{eq:iter_int}
    f^n(I(u)) = I(u\lp n) = f_k^n(I(u))
    \qquad\text{for every}\quad
    u\in\Sigma^k, n\geq 0.
\end{equation}
The unique uncountable minimal set $X$ of $f$ is
\begin{equation*}
    X_{\alpha} = X = \bigcap_{k\geq 1} X_k.
\end{equation*}
From the latter part of Lemma~\ref{lema:grupa}, each interval $I(u)$, $u\in\Sigma^k$, is $2^k$ periodic under $f$ and $f_k$. The \emph{trajectory} of $u\in\Sigma^k$ is the $2^k$--periodic sequence $(u \lp n)_{n\geq 0}$. Similarly, the trajectory of $I(u)$ is the $2^k$--periodic sequence $I(u \lp n)_{n\geq 0}$.

\begin{figure}[!ht]
    \centering
        \includegraphics[width = 0.5\textwidth]{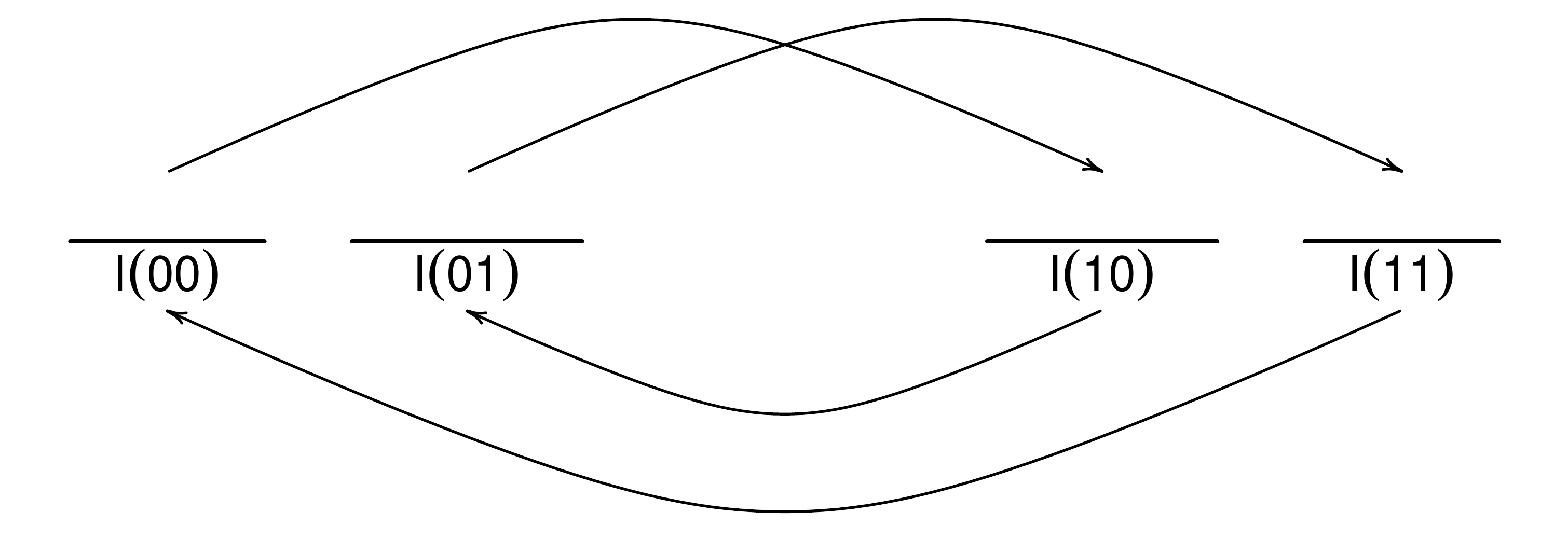}
    \caption[Adding machine]{Visualization of the trajectory of $I(00)$.}
    \label{fig:adding}
\end{figure}

\begin{lema}
\label{lema:conjugate}
The map $\restr{f_{\alpha}}{X}$ is topologically conjugate to $(\Sigma^\infty, \lp)$.
\end{lema}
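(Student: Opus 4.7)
The natural candidate for the conjugating map is the restriction of $\cores^{\alpha}\colon \Sigma^\infty\to [0, 1]$ to its image. The plan has three steps: (i) identify $\cores(\Sigma^\infty) = X$; (ii) show $\cores$ is a homeomorphism onto $X$; (iii) derive the conjugacy from Lemma~\ref{lema:f_a_sigma}.

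For (i), the inclusion $\cores(\Sigma^\infty)\subseteq X$ is essentially already in hand: for any $u\in\Sigma^\infty$ and any $k\geq 1$,
\begin{align*}
\cores(u) - \cores(u_1^k 0^\infty) = (1 - \alpha)\sum_{i > k} u_i\alpha^{i - 1}\in [0, \alpha^k],
\end{align*}
so $\cores(u)\in I(u_1^k)\subseteq X_k$, and intersecting over $k$ places $\cores(u)$ in $X$. For the reverse inclusion I would first establish that the intervals $\{I(u)\}_{u\in\Sigma^k}$ are pairwise disjoint: if $u, v\in\Sigma^k$ first differ at position $j$ with $u_j < v_j$, a short geometric-series estimate combined with $\alpha < 1/2$ yields $\cores(v) - \cores(u) \geq \alpha^{j - 1}(1 - 2\alpha) + \alpha^k > \alpha^k$, which strictly exceeds the common length of $I(u)$. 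Each $x\in X$ therefore lies in a unique $I(u^{(k)})$ for every $k$, and these intervals are automatically nested with $u^{(k+1)}_1^k = u^{(k)}$. The assembled word $u\in\Sigma^\infty$ then satisfies $\abs{x - \cores(u_1^k 0^\infty)}\leq \alpha^k\to 0$, hence $\cores(u) = x$.

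For (ii), continuity of $\cores$ follows from the uniform estimate that if the first $k$ coordinates of $u, v\in\Sigma^\infty$ coincide, then both $\cores(u), \cores(v)$ lie in the single interval $I(u_1^k)$ of length $\alpha^k$, so $\abs{\cores(u) - \cores(v)}\leq \alpha^k$. Injectivity is recorded just after the definition of $\cores$, and surjectivity onto $X$ is exactly step (i). Since $\Sigma^\infty$ is compact in its metric topology and $X\subseteq [0, 1]$ is Hausdorff, the continuous bijection $\cores\colon \Sigma^\infty\to X$ is automatically a homeomorphism.

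For (iii), Lemma~\ref{lema:f_a_sigma} already supplies $f_\alpha(\cores(u)) = \cores(u \lp 1)$ for every $u\in\Sigma^\infty$, which is precisely the required intertwining between $\restr{f_\alpha}{X}$ and the map $u\mapsto u \lp 1$ on $\Sigma^\infty$. The one genuinely substantive step I anticipate is verifying the disjointness of $\{I(u)\}_{u\in\Sigma^k}$ (and hence the unambiguous nested-intervals construction of the preimage word); the rest is straightforward bookkeeping on top of the preparatory material.
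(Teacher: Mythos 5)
Your proposal is correct and follows essentially the same route as the paper: take $\cores$ as the conjugating map, verify it is a continuous bijection from $\Sigma^\infty$ onto $X$ via the nested intervals $I(u_1^k)$, and invoke Lemma~\ref{lema:f_a_sigma} for the intertwining. You merely make explicit two points the paper leaves implicit --- the pairwise disjointness of the intervals $I(u)$, $u\in\Sigma^k$, and the compact-to-Hausdorff argument upgrading the continuous bijection to a homeomorphism --- both of which check out.
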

\begin{proof}
By Lemma~\ref{lema:f_a_sigma}, it is sufficient to prove that $\cores$ is a continuous bijection between $X$ and $\Sigma^\infty$.

Since $\cores(0 1^\infty) < \cores(1 0^\infty)$, the map $\cores$ is injective. We have to prove that $\cores$ is also surjective. Let $x\in X$, then there is a sequence of words $(u^{(i)})$ such that $x\in I(u^{(i)})$ for every $i\geq 1$. Therefore $x\in\bigcap_{i = 1}^\infty I(u^{(i)})$. Clearly, for every $i\geq j$, the word $u^{(i)}$ begins with $u^{(j)}$ and $I(u^{(i)})\subset I(u^{(j)})$. Moreover, every $u\in\Sigma^\infty$ with the property that $\cores(u)\in I(u^{(j)})$ begins with $u^{(j)}$.

Intervals $I(u^{(i)})$ are compact and their diameters converge to zero, therefore their intersection is a singleton. Consider $u\in\Sigma^\infty$ with the property that $u_1^k = u^{(k)}$ for every $k > 0$. It follows that $\cores(u)\in I(u^{(i)})$ for every $i$ and $\cores(u) = x$.

It remains to show that $\cores$ is continuous, i.e.~for every $x\in X$ and for every neighbourhood $U_x$ of $x$ there is a neighbourhood $V_x$ of $u = \cores^{-1}(x)$ such that $\cores(V_x)\subset U_x$. The basis for the topology on $\Sigma^\infty$ is formed with sets $B(w, k) = \{v\in\Sigma^\infty\ |\ v_1^k = w_1^k\}$. Obviously, $\cores(B(w, k))\subset I(w_1^k)$. Since $U_x$ is open, there is $k > 0$ satisfying $I(u_1^k)\subset U_x$. Then $\cores(B(u, k)) \subset I(u_1^k)$.
\end{proof}

Let $u, v\in\Sigma^k$. We say that $I(u)$ and $I(v)$ are $\ve$--\emph{close} if $\dis(u, v)\leq\ve$. Analogously, $I(u)$ and $I(v)$ are $\ve$--\emph{distant} if $\dis(u, v) > \ve$. 

Let the matrix $M = (m_{i, j})$ of the size $n\times m$ be given. Let $\beta\in\{1, 2, \ldots, n\}^{n'}$, respectively $\delta\in\{1, 2, \ldots, m\}^{m'}$, where $n', m'\geq 1$. Denote by $\beta[i]$, respectively that of $\delta$ by $\delta[i]$, an $i$\emph{th} element of $\beta$. Then $M[\beta, \delta] = (m'_{i', j'})$ is a matrix of the size $n'\times m'$ with $m'_{i', j'} = m_{i, j}$ where $i = \beta[i']$ and $j = \delta[j']$. If $\beta = (i)$ and if $\delta = (j)$, then $M[\beta, \delta] = M[i, j] = m_{i, j}$.

\begin{df}
\label{df:distance_matrix}
Let $f_{\alpha, k}$ and $1\leq\ell\leq\infty$ be given. The matrix $D_{k, \alpha, \ell}(\ve)$ of the size $2^k\times 2^k$ is called the \emph{$\ell$--distance matrix} where:
	\begin{itemize}
		\item $D_{k, \alpha, \ell}(\ve)[\gamma(u), \gamma(v)] = 1$ if $\dis_{\ell}^{\alpha}(u, v)\leq \ve$ and
		\item $D_{k, \alpha, \ell}(\ve)[\gamma(u), \gamma(v)] = 0$ otherwise,
	\end{itemize}
for every $u, v\in\Sigma^k$. Instead of $D_{k, \alpha, \ell}$, we write $D_{k, \alpha}$ if $\ell = 1$.
\end{df}

By~(\ref{eq:iter_int}), the operation $\lp$ defines iterations of intervals under $f$ and $f_k$. Let $u, w\in\Sigma^k$. We show that
\begin{align}
\label{eq:gamma_scitanie}
\gamma(u\pp n) = \gamma(u) + n,\qquad \gamma(w\pminus m) = \gamma(w) - m,
\end{align}
for every $0\leq n \leq 2^k - \gamma(u)$ and $0\leq m < \gamma(w)$. This means that operations $\pp, \pminus$ define moves of intervals $I(v)$ on $X_k$. That is, e.g.~for $0^k < u < 1^k$ the interval $I(u \pp 1)$ lies to the right of $I(u)$, respectively $I(u \pminus 1)$ lies to the left of $I(u)$.

For $n = 0$ and $m = 0$, the statements hold trivially.
Suppose that $u < 1^k$, then there is $0\leq k' < k$ such that $u = v^{(k - k' - 1)}0 1^{k'}$. Then $u\pp 1 = v^{(k - k' - 1)}1 0^{k'}$ and $\gamma(u\pp 1) = \gamma(u) + 1$. Let $u\pp n' < 1^k$ and assume that $\gamma(u\pp n') = \gamma(u) + n'$. We thus have 
\begin{align*}
\gamma(u\pp n' \pp 1) = \gamma((u\pp n')\pp 1) = \gamma(u\pp n') + 1 = \gamma(u) + n' + 1. 
\end{align*}
Hence, $\gamma(u \pp n) - \gamma(v\pp n) = \gamma(u) - \gamma(v)$.
Let $v\in\Sigma^k$ be such that $v \pp m = w$, i.e.~$v = w\pminus m$. Therefore $\gamma(w) = \gamma(v\pp m) = \gamma(v) + m$ and $\gamma(w\pminus m) = \gamma(w) - m$.

\section{Distance matrix and properties of \texorpdfstring{$f_\alpha$}{fa}}
\label{del:dis_matrix}
\label{kap_dist_matrix}

The purpose of this section is to introduce patterns contained in distance matrices and also to describe the behaviour of word trajectories.

\begin{lema}
\label{lema:trajektorie_dvoch_bodov_rozsah}
Let $k > h\geq 0$ be given and $u, v\in\Sigma^k$ are such that $u_1^h = v_1^h$ and $u_{h + 1}\neq v_{h + 1}$. Then for all $x\in I(u), y\in I(v)$,
\begin{equation*}
    (1-2\alpha) \alpha^h < (d_E)_{\infty}^f(x,y) < \alpha^h.
\end{equation*}
If $u = 0^{h} u_{h + 1}^k$ and $v = 0^h v_{h + 1}^k$, then $(d_E)_{\ell}^f(x, y) = d_E(x, y)$ for $\ell < 2^h$.
\end{lema}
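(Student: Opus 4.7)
The strategy is to lift the problem to the adding machine and read off distances via $\cores$. The starting observation is that the hypothesis ``$u_1^h = v_1^h$ and $u_{h+1} \neq v_{h+1}$'' is preserved under iteration: the adding machine $(\Sigma^k, \lp)$ acts by isometries in the prefix metric $d(u,v) = \max\{1/2^i : u_i \neq v_i\}$, so for every $n \geq 0$ we still have $(u \lp n)_1^h = (v \lp n)_1^h$ and $(u \lp n)_{h+1} \neq (v \lp n)_{h+1}$. Combined with $f^n(I(u)) = I(u \lp n)$ from~(\ref{eq:iter_int}), this pins down where $f^n(x)$ and $f^n(y)$ live at every iterate.

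For the first assertion I would use this localisation in both directions. For the upper bound, $f^n(x)$ and $f^n(y)$ both lie in the common length-$\alpha^h$ interval $I((u \lp n)_1^h)$. For the lower bound, $I(u \lp n)$ and $I(v \lp n)$ are contained in the two disjoint halves $I((u\lp n)_1^h 0)$ and $I((u\lp n)_1^h 1)$ of $I((u\lp n)_1^h)$; a direct $\cores$-computation shows these halves are separated by a gap of $(1-\alpha)\alpha^h - \alpha^{h+1} = (1-2\alpha)\alpha^h$. Interval containment thus yields the non-strict versions $(1-2\alpha)\alpha^h \leq (d_E)_\infty^f(x,y) \leq \alpha^h$; to sharpen them to strict inequalities I would use that the offsets $s = x - \cores(u)$ and $t = y - \cores(v)$ are fixed while the trailing letters of $u \lp n$ and $v \lp n$ (which determine the precise positions of $I(u \lp n)$ and $I(v \lp n)$ inside their halves) vary with $n$, so the extremal configurations cannot persist simultaneously along the whole orbit.

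For the second assertion, with $u = 0^h u_{h+1}^k$ and $v = 0^h v_{h+1}^k$, the key observation is that adding any $n < 2^h$ to a word whose first $h$ letters are $0$ cannot propagate a carry past position $h$, so $(u \lp n)_{h+1}^k = u_{h+1}^k$ and $(v \lp n)_{h+1}^k = v_{h+1}^k$ throughout $0 \leq n < 2^h$. This keeps $\cores(u \lp n) - \cores(v \lp n)$ constant (equal to $\cores(u) - \cores(v)$), and because the trajectories of $x$ and $y$ avoid $I(1^k)$ for $n < 2^h$ (with only a borderline endpoint case to verify), $f$ acts as a pure translation on each interval they visit; hence $f^n(x) - f^n(y) = x - y$ and $(d_E)_\ell^f(x, y) = d_E(x, y)$. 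The main obstacle I anticipate is isolating the strictness in the first assertion: interval containment alone only delivers $\leq$ and $\geq$, so ruling out the extremal configurations along an infinite orbit is the step that I expect to require the most care.
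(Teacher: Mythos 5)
Your proposal follows essentially the same route as the paper's proof: localise $f^n(x)$ and $f^n(y)$ in $I(w\lp n)$ via $f^n(I(u)) = I(u\lp n)$, take the upper bound from the length $\alpha^h$ of $I(w\lp n)$ and the lower bound from the gap $(1-2\alpha)\alpha^h$ between $I(w0\lp n)$ and $I(w1\lp n)$, and for the second assertion observe that $f$ acts as a translation on each $I(0^h\lp n)$ visited before the orbit reaches $I(1^h)$ --- your no-carry-past-position-$h$ argument is an equivalent symbolic phrasing of the paper's ``slope one on $I(0^h\lp n)$'' step. One comment on the point you single out as delicate: the paper's own proof likewise only establishes the non-strict bounds and asserts strictness without further argument. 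Your ``extremal configurations cannot persist along the whole orbit'' idea is the right mechanism for the \emph{lower} bound, since every iterate's distance is at least the gap, so equality of the supremum with $(1-2\alpha)\alpha^h$ would force every iterate to realise the gap exactly, and already the iterate $n = 2^h$ (which exchanges the two orbits between the halves $I(w0\lp n)$ and $I(w1\lp n)$) destroys that configuration. It cannot, however, work for the \emph{upper} bound: $(d_E)_\infty^f$ is a supremum, so it equals $\alpha^h$ as soon as a single iterate realises the extremal configuration, and this happens already at $n = 0$ for the admissible choice $u = w0^{k-h}$, $v = w1^{k-h}$, $x = \cores(u)$, $y = \cores(v) + \alpha^k = \cores(w) + \alpha^h$. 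So the strict upper inequality is in fact false on the closed intervals $I(u)$, $I(v)$; what both your argument and the paper's actually deliver is $(1-2\alpha)\alpha^h \le (d_E)_\infty^f(x,y) \le \alpha^h$, with strictness recoverable only on the left.
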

\begin{proof}
Note that the (Euclidean) length of $I(w)$ with $w\in\Sigma^h$ is $\alpha^h$. Since words have the first $h$ coordinates equal, there is $w\in\Sigma^h$ such that $x, y\in I(w)$, hence their distance is at most the length of this interval. The $(h + 1)$\emph{st} coordinates are distinct and therefore their distance has to be at least the length of the gap between intervals $I(w0)$ and $I(w1)$.

The interval $I(w)$ is $2^h$ periodic, and $I(w0)$ and $I(w1)$ are $2^{h + 1}$ periodic. Therefore $f^n(x), f^n(y)\in I(w\lp n)$. Without loss of generality, $f^n(x)\in I(w0\lp n)\neq I(w1 \lp n)\ni f^n(y)$ for every $n$.

If $u = 0^h u_{h + 1}^k$ and $v = 0^h v_{h + 1}^k$, then $I(u), I(v)\subset I(0^h)$. Since $I(0^h \lp 2^h) = I(0^h)$, it follows that, by Lemma~\ref{lema:grupa}, $I(0^h\lp n)\neq I(1^h)$ for $n < 2^h$. Since the slope of $f$ is one on each such $I(0^h \lp n)$, the maps $f, f^2, \ldots, f^{2^h - 1}$ are isometries restricted to $I(0^h)$. By definition of $(d_E)_\ell$, the proof is complete.
\end{proof}

Each point in $[0, 1]$ is either eventually periodic under $f_{\alpha}$ or belongs to $X_k$ for every $k > 0$, after a finite number of iterations. No (eventually) periodic points form a scrambled set. From the previous lemma, no distinct points from $X$ form a scrambled set either. Now, let one point be from $X$ and the other be an eventually periodic point $y$. In this case there is $k > 0$ such that the orbit of $y$ does not intersect $X_k$. Thus, it has positive distance from this set. The orbit of each point from $X$ lies in $X\subset X_k$, hence the pair under consideration is not a Li-York pair. Therefore Lemma~\ref{lema:trajektorie_dvoch_bodov_rozsah} provides an easy argument that no $f_{\alpha}$ is Li-Yorke chaotic.

The weaker version of the following lemma was proven in~\cite{CK-delahaye}. Later we will need a more specific version presented here.

\begin{lema}
\label{lema:per_del}
There is a unique $2^k$--periodic orbit under $f$ in every set $X_{k}\setminus X_{k + 1}$. Moreover, if $x$ is one of such points, then $f^i(x) = f_k^i(x)$ for every $i\in\ZZZ$.
\end{lema}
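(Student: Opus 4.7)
Write $J(u)=I(u)\setminus(I(u0)\cup I(u1))=(\cores(u)+\alpha^{k+1},\,\cores(u)+\alpha^k-\alpha^{k+1})$, so that $X_k\setminus X_{k+1}=\bigsqcup_{u\in\Sigma^k}J(u)$. The plan is to reduce the statement to a single linear equation on the top gap $J(1^k)$. First I would observe that any $2^k$-periodic $f$-orbit contained in $X_k\setminus X_{k+1}$ intersects each $I(u)$, $u\in\Sigma^k$, in exactly one point, which then necessarily lies in $J(u)$: this follows from~(\ref{eq:iter_int}), the pairwise disjointness of the intervals $I(u)$, and $\abs{\Sigma^k}=2^k$.

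Next I would record the local shape of $f$ on the gaps. For every $u\neq 1^k$ the interval $I(u)$ lies in $[0,\,1-\alpha^{k-1}+\alpha^k]$, where $f$ coincides with $f_{\alpha,k}$; hence $f|_{I(u)}$ is the translation sending $I(u)$ onto $I(u\lp 1)$, and in particular $f(J(u))=J(u\lp 1)$. The remaining gap $J(1^k)=(1-\alpha^k+\alpha^{k+1},\,1-\alpha^{k+1})$ is precisely the case~2 interval of Definition~\ref{df:del} with $j=k+1$, on which
\[
f(x)=\frac{1-\alpha+\alpha^2}{2\alpha-1}(x-1)+\alpha^{k+2}\,\frac{2-\alpha}{2\alpha-1},
\]
while $f_k(x)=x-1+\alpha^k$ is the translation $J(1^k)\to J(0^k)$.

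The key reduction is now the following. Let $x\in J(1^k)$ denote the orbit point there and put $y=f(x)\in J(0^k)$; iterating the translations from the previous paragraph, the return from $y$ back to $J(1^k)$ in $2^k-1$ further steps is simply $y\mapsto y+(1-\alpha^k)$. Hence the orbit closes, i.e.\ $f^{2^k}(x)=x$, if and only if
\[
f(x)=x-1+\alpha^k=f_k(x).
\]
On $J(1^k)$ the maps $f$ and $f_k$ have different slopes (since $\alpha<1/2$), so this is a single linear equation with the unique solution $x^{\ast}=1-\alpha^k(1-\alpha+\alpha^2)/(2-\alpha)$. A direct check reduces $x^{\ast}\in J(1^k)$ to $\alpha<(1-\alpha+\alpha^2)/(2-\alpha)<1-\alpha$, both of which hold on $(0,1/2)$. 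This gives existence and uniqueness of the orbit. Moreover $f(x^{\ast})=f_k(x^{\ast})$ by construction and $f$ agrees with $f_k$ at every other orbit point by the second paragraph, so on this $2^k$-cycle $f^i=f_k^i$ for $0\le i\le 2^k$; since both maps act as bijections on the cycle, the equality extends to all $i\in\ZZZ$.

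The main obstacle is spotting the reduction to the single equation $f(x)=f_k(x)$ on $J(1^k)$; once that is in place, the rest is a brief linear calculation and a pair of elementary inequalities.
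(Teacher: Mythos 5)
Your proof is correct, but it takes a genuinely different route from the paper's. The paper gets existence softly, by applying the Brouwer (intermediate value) argument to $f^{2^k}$ on $I(0^k)$, proves uniqueness by an expansion argument (two distinct periodic points would be moved apart by the branch of slope $\tfrac{1-\alpha+\alpha^2}{2\alpha-1}$, of absolute value $>1$, on $I(1^k)$, while the other $2^k-1$ steps are isometries), and deduces $f^i(x)=f_k^i(x)$ from the fact that $f_k^{2^k}$ is the identity on $X_k$. You instead collapse all three claims into the single affine equation $f(x)=x-1+\alpha^k$ on the top gap $J(1^k)$, which buys an explicit formula $x^{\ast}=1-\alpha^k(1-\alpha+\alpha^2)/(2-\alpha)$ for the periodic point (correct: for $k=0$, $\alpha=1/3$ it gives the fixed point $8/15$) and makes the identity $f=f_k$ on the orbit true by construction rather than a separate argument; this is close in spirit to the ``graph-based'' proof the author alludes to after the lemma, but fully algebraic. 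Both proofs rest on the same structural facts: $\restr{f}{I(u)}$ is a translation onto $I(u\lp 1)$ for $u<1^k$, and on $I(1^k)$ the map $f$ is the single decreasing affine branch. One small point you should make explicit for the existence half: having found $x^{\ast}\in J(1^k)$ with $f^{2^k}(x^{\ast})=x^{\ast}$, you still need that the resulting orbit lies in $X_k\setminus X_{k+1}$ and has least period exactly $2^k$. Both are immediate --- the connecting translations carry $J(0^k)$ onto $J(0^k\lp n)$, so the orbit stays in $\bigcup_{u}J(u)$, and it meets $2^k$ pairwise disjoint intervals, so the period cannot be a proper divisor of $2^k$ --- but as written your argument only verifies $x^{\ast}\in J(1^k)$ and closure of the orbit.
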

\begin{proof}
By~(\ref{eq:iter_int}), $f^{2^k}(I(0^k)) = I(0^k)$ and $f^{i}(I(0^k))\neq I(0^k)$ for every $k\geq 0$ and $0 < i < 2^k$. Thus, by Brouwer fixed-point theorem, in $I(0^k)$ there exists a periodic point under $f^{2^k}$ with the smallest period $2^k$. Since $f^{2^k}(I(0^{k + 1})) = I(0^k 1)\neq I(0^{k + 1})$, the point lies in $I(0^k)\cap (X_k\setminus X_{k + 1})$.

Suppose that there are two disjoint periodic orbits in $X_k\setminus X_{k + 1}$. Let $x, y\in I(0^k)\cap (X_k\setminus X_{k + 1})$ belong to these. From the previous lemma, $(d_E)_{\ell}(x, y) = d_E(x, y)$ for $\ell < 2^k$. Since $x, y\in X_k\setminus X_{k + 1}$ are periodic and both $X_k$, $X_{k + 1}$ are $f$--invariant, $f^{m}(x), f^m(y)\in X_k\setminus X_{k + 1}$ for every $m\in\ZZZ$. Put $J = I(1^k)\cap (X_k\setminus X_{k + 1})$. Then $f^{2^k - 1}(x), f^{2^k - 1}(y)\in J$ and the function $\restr{f}{J}$ is linear with the slope less than $-1$. Thus, $d_E(f^{2^k}(x), f^{2^{k}}(y)) > d_E(x, y)$, contrary to the periodicity of points.

Since $f^n(x)\in I(0^k \lp n)\neq I(0^k)$ for $0\leq n < 2^k$ and since the definition of $f_k$, it follows that $f^n(x) = f_k^n(x)$. The period of $x$ is $2^k$, therefore it is sufficient to show that $x = f_k^{2^k}(x)$. For each $u\in\Sigma^k$, the map $\restr{f_k}{I(u)}$ is linear with slope $1$ and $f_k^{2^k}(I(u)) = I(u)$. Therefore, every $f_k^{2^k}$ restricted to $X_k$ is an identity.
\end{proof}

The previous lemma could be proven slightly more elegantly using the graph of the map $f_{\alpha}$. It follows from the graphically obvious fact that there is a unique fixed point and the fact that the graph of $\restr{f_{\alpha}}{[1 - \alpha, 1]}$ is the smaller copy of $f_{\alpha}$. More precisely, the map $\restr{f_{\alpha}}{[0, \alpha]}$ is linear with the slope of the line equal to one and maps onto $[1 - \alpha, 1]$. Hence $\restr{f_{\alpha}^2}{[1 - \alpha, 1]}$ is conjugate to $f_{\alpha}$. Therefore the number of fixed points is the same in both the cases. The second part of the lemma would be proven similarly. However, we chose more algebraic way to prove the lemma to keep the paper more consistent. 

Since $X_0 = [0, 1]$ and the set $X = \bigcap_{k = 0}^{\infty}X_k$ contains no periodic points, it can be seen, that if $x$ is $f$--periodic point of the period $2^k$, then $x\in X_k\setminus X_{k + 1}$ and in $[0, 1]$ are no $f$--periodic points of other period than $2^k$, $k\geq 0$. Thus, each $f_{\alpha}$ is indeed of $2^\infty$ type.

\begin{dosledok}
\label{cor:per_vyskyt}
The set of periods for $f$ is $\{2^k\ |\ k\geq 0\}$. If $x$ is of the period $2^k$, then $x\in X_k\setminus X_{k + 1}$.
\end{dosledok}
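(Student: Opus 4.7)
The plan is to view the corollary as a direct unpacking of Lemma~\ref{lema:per_del} together with the filtration structure $[0,1] = X_0 \supseteq X_1 \supseteq X_2 \supseteq \cdots \supseteq X = \bigcap_{k\geq 0} X_k$.

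First I would record the $f$--invariance of each $X_k$: by~(\ref{eq:iter_int}), for every $u \in \Sigma^k$ and $n \geq 0$ we have $f^n(I(u)) = I(u \lp n) \subseteq X_k$, so $f(X_k) \subseteq X_k$. Consequently, if $x \in X_k$ then the whole forward orbit of $x$ stays in $X_k$, which allows the ``unique periodic orbit'' statement of Lemma~\ref{lema:per_del} to be used point-wise rather than orbit-wise.

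Next I would dispose of the possibility that a periodic point can lie in the minimal set $X$. By Lemma~\ref{lema:conjugate}, $\restr{f}{X}$ is topologically conjugate to $(\Sigma^\infty, \lp)$, and by Lemma~\ref{lema:grupa} the group $(\Sigma^\infty, \lp)$ is isomorphic to $(\ZZZ, +)$; in particular no element of $\Sigma^\infty$ has finite order, so $\restr{f}{X}$ has no periodic points. Hence if $x \in [0,1]$ is $f$--periodic, then $x \notin X$, so there exists a smallest $k \geq 0$ with $x \notin X_{k+1}$. Using $X_0 = [0,1]$, this gives $x \in X_k \setminus X_{k+1}$. Lemma~\ref{lema:per_del} now asserts that the unique periodic orbit contained in $X_k \setminus X_{k+1}$ has period $2^k$, so $x$ has period $2^k$. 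This simultaneously proves the second assertion of the corollary and shows that the period set is contained in $\{2^k : k \geq 0\}$.

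For the reverse inclusion I would simply invoke the existence half of Lemma~\ref{lema:per_del}: for every $k \geq 0$ there is a (nonempty) $2^k$--periodic orbit inside $X_k \setminus X_{k+1}$, exhibiting $2^k$ as a period. The only mildly delicate point is the non-existence of periodic points in $X$, and even that reduces to a one-line argument once Lemma~\ref{lema:conjugate} is available; all the real work has already been done in Lemma~\ref{lema:per_del}.
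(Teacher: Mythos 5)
Your proposal is correct and follows essentially the same route as the paper: a periodic point cannot lie in $X$, hence lies in some $X_k\setminus X_{k+1}$, where Lemma~\ref{lema:per_del} forces the period to be $2^k$, and the existence half of that lemma gives the reverse inclusion. The only difference is cosmetic --- you justify the aperiodicity of $\restr{f}{X}$ via the conjugacy of Lemma~\ref{lema:conjugate} and Lemma~\ref{lema:grupa}, whereas the paper simply asserts that $X$ contains no periodic points (which also follows directly from each $I(u)$, $u\in\Sigma^k$, being $2^k$--periodic for every $k$).
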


The point $0$ is not $f$--periodic but it is $f_k$--periodic for every $k\geq 0$. In fact, left points of intervals $I(u)$, $u\in\Sigma^k$, form the orbit $\Orb_{f_k}(0)$. We conclude that analysis of distances of trajectories of words is the same as that of the recurrence of $f_k$--trajectories of $0$.

\begin{lema}
\label{lema:najvacsia_vzdialenost_cyklus_intervalov}
Let $h, k > 0$ and $u, v\in\Sigma^{k + h + 2}$ satisfy $u_1^h = v_1^h$ and $u_{h + 1}\neq v_{h + 1}$. If $n\geq 0$ is such that
\begin{align*}
u \lp n = 0^{k + h + 2},\ v \lp n = 0^h 1 1 w\qquad\text{or}\qquad
u \lp n = 0^h 1 1 w,\ u \lp n = 0^{k + h + 2}
\end{align*}
for some $w\in\Sigma^k$, then
\begin{align*}
\dis_{\infty}(u, v) = \dis(0^{k + h + 2}, 0^k 1 1 w).
\end{align*}
Such an $n$ always exists and is unique up to the period $2^{k + h + 2}$.
\end{lema}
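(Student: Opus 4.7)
My plan is to reduce $\dis_\infty(u, v) = \sup_n \dis(u \lp n, v \lp n)$ to an explicit maximum over a single parameter, then identify the extremal iterate.

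Setting $D = v \lm u$, the pair $(u \lp n, v \lp n) = (x, x \lp D)$ as $x = u \lp n$ ranges over $\Sigma^{k+h+2}$. Because $\lp$ propagates carries strictly to the right, the first $h$ coordinates of $u \lp m$ and $v \lp m$ agree for every $m$ while position $h+1$ disagrees; specializing to $m$ with $u \lp m = v$ therefore forces $D = 0^h 1 D''$ with $D'' \in \Sigma^{k+1}$, and tracing the carry one position further shows $D''_1 = 1$ if $u_{h+2} \neq v_{h+2}$ and $D''_1 = 0$ otherwise. Writing $x = w^{(h)} a y$ with $a \in \{0, 1\}$ and $y \in \Sigma^{k+1}$, a direct computation based on the carry out of position $h+1$ yields
\[
\cores(x \lp D) - \cores(x) = (1 - 2a)(1-\alpha)\alpha^h + \alpha^{h+1}\bigl(\cores(y \lp \widetilde D_a) - \cores(y)\bigr),
\]
where $\widetilde D_0 = D''$ and $\widetilde D_1 = D'' \lp 1 0^k$. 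The right-hand side is independent of $w^{(h)}$, so the supremum reduces to the larger of the extrema over $y \in \Sigma^{k+1}$ in the two cases $a = 0$ and $a = 1$.

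The key estimate is that for every $y, \bar D \in \Sigma^{k+1}$ one has $\cores(y \lp \bar D) - \cores(y) \leq \cores(\bar D)$, with equality at $y = 0^{k+1}$. I would prove this from the bitwise identity $(y \lp \bar D)_i = y_i + \bar D_i + c_{i-1} - 2 c_i$ (with $c_0 = 0$ and $c_{k+1}$ the overflow); multiplying by $(1-\alpha)\alpha^{i-1}$ and summing over $i$, after telescoping one is left with $(1-\alpha)\sum_{j=1}^{k} c_j \alpha^{j-1}(\alpha - 2) - 2(1-\alpha) c_{k+1}\alpha^k$, which is nonpositive since $\alpha < 1 < 2$ and every $c_j \geq 0$. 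Applying the same bound to $\lm \bar D$ gives the matching lower bound $\cores(y \lp \bar D) - \cores(y) \geq -\cores(\lm \bar D)$, attained at $y = \lm \bar D$. This carry-telescope is the main technical step.

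Combining the estimate with the displayed formula produces two candidate maxima, realized at $(u \lp n, v \lp n) = (0^{k+h+2}, 0^h 1 D'')$ in the case $a = 0$ (taking $y = 0^{k+1}$, $w^{(h)} = 0^h$) and at $(0^h 1 \lm(D'' \lp 1 0^k), 0^{k+h+2})$ in the case $a = 1$. A short check shows the leading coordinate of the ``free'' tail equals $D''_1$ in the first case and $1 - D''_1$ in the second, so exactly one of the two configurations has the required form $0^h 1 1 w$, selected by whether $u_{h+2} \neq v_{h+2}$ or $u_{h+2} = v_{h+2}$. That this selected candidate is also the larger of the two maxima uses $\alpha < 1/2$: whenever the leading bit of the residue is $0$, its $\cores$-value is bounded by $\alpha - \alpha^{k+1} < (1 - \alpha^{k+1})/2$, producing a positive gap $\alpha^{h+1}(1 - 2\alpha + \alpha^{k+1})$ between the candidates in the correct direction. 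This pins down $\dis_\infty(u, v) = \dis(0^{k+h+2}, 0^h 1 1 w)$. Existence and uniqueness of $n$ modulo $2^{k+h+2}$ then follow from the cyclic structure of $(\Sigma^{k+h+2}, \lp)$, the two case conditions being mutually exclusive.
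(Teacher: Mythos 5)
Your proposal is correct, but its technical core is genuinely different from the paper's. Both proofs agree on the bookkeeping part: you compute the increment $D$ with $v = u \lp D$, observe that $u_1^h = v_1^h$ and $u_{h+1}\neq v_{h+1}$ force $D = 0^h 1 D''$, and obtain existence and uniqueness of $n$ from the group structure together with the fact that exactly one of $D$, $\lm D$ has tail beginning with $1$ (this is also how the paper argues, via inverse elements; your side remark that $D''_1=1$ exactly when $u_{h+2}\neq v_{h+2}$ depends on which of $u_{h+1},v_{h+1}$ equals $1$, but this is immaterial since your final selection handles both alternatives). The divergence is in proving maximality of the distance. The paper normalizes to the candidate pair $(0^{k+h+2}, 0^h11a)$ and shows the distance cannot increase under any further iterate by a case analysis on the two bits of the increment at positions $h+1$, $h+2$; the only nontrivial case ($b_0^1=b_0^2=0$) is handled by an explicit carry-propagation system~(\ref{eq:modula}) with three sub-cases. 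You instead parametrize all simultaneous iterates as $(x, x\lp D)$ over $x\in\Sigma^{k+h+2}$, derive the exact identity splitting off the $(h+1)$st bit, and reduce everything to the single inequality $\cores(y\lp \bar D)-\cores(y)\leq\cores(\bar D)$, proved by telescoping the carry terms $c_{i-1}-2c_i$; the two candidate extrema are then compared by the leading-bit gap $1-2\alpha+\alpha^{k+1}>0$. Your route buys a shorter and more conceptual maximality argument (one global inequality with equality case identified, rather than a branching carry analysis), and the carry-telescope lemma is reusable; the paper's route stays closer to the geometric picture of interval trajectories (Figure~\ref{fig:adding}) and to the machinery of Lemma~\ref{lema:trajektorie_dvoch_bodov_rozsah} used elsewhere. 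Both correctly yield $\dis_{\infty}(u,v)=\dis(0^{k+h+2},0^h11w)$ (note the statement's ``$0^k11w$'' is a typo for ``$0^h11w$'', which you silently and correctly fixed).
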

\begin{proof}
Without loss of generality, we can assume that $u = u_1^h 0 u_{h + 2} u^{(k)}$ and $v = u_1^h 1 v_{h + 2} v^{(k)}$. Let $0\leq n, m < 2^{k + h + 2}$ be such that $n\cdot 1 0^{k + h + 1} = \lm u$ and $m\cdot 1 0^{k - 1} = \lm v$. Assume that $u_{h + 2} \neq v_{h + 2}$. We thus have $u \lp (\lm u) = 0^{k + h + 2}$ and $v \lp (\lm u) = 0^h 1 w$ where $w = v_{h + 2} v^{(k)} \lp (\lm u_{h + 2} u^{(k)})$. Since the first coordinates of a word and its inverse are identical and, by assumption, $u_{h + 2} \neq v_{h + 2}$, it follows that $w_1 = 1$ and $n$ is the desired iteration. It can be shown similarly that $m$ does not satisfy our assumptions. If $u_{h + 2} = v_{h + 2}$, then $n$ does not meet assumptions, but $m$ does it so. In this case, $m$ is the desired iteration. The inverse element is unique in a group, therefore $n, m < 2^{k + h + 2}$ are unique as well.

It remains to show that the distance is maximal. From Lemma~\ref{lema:grupa}, the words $u, v$ are $2^{k + h + 2}$ periodic, therefore, without loss of generality, we can assume that $u = 0^h 1 1 a$, $a\in\Sigma^k$, and $v = 0^{k + h + 2}$. From Lemma~\ref{lema:trajektorie_dvoch_bodov_rozsah}, the distance changes only after $m\cdot 2^h$ iterations where $m\geq 1$, i.e.~iterations $u\lp n$ and $v\lp n$ are such that $n\cdot 1 0^{k + h + 1} = 0^{h} b_0^1 b_0^2 b$ where $b = b_1^k\in\Sigma^k$. Let $a = a_1^k\in\Sigma^k$. The distance $\dis(0^{k + h + 2}, 0^h 1 1 a)$ is at least $\alpha^h - \alpha^{h + 2}$. We have to show that $\dis(0^{k + h + 2}\lp 0^h b_0^1 b_0^2 b, 0^h 1 1 a \lp 0^h b_0^1 b_0^2 b)\leq \alpha^h - \alpha^{h + 2}$ for every $b_0^1 b_0^2 b\in\Sigma^{k + 2}$.

\noindent
There are four possibilities for $b_0^1, b_0^2$. For a better visualization of iterations, see Figure~\ref{fig:adding}.
\begin{itemize}
    \item $b_0^1 = 1, b_0^2\in\{0, 1\}$. In these cases, the maximal distance is $\alpha^h - \alpha^{h + 2} - \alpha^{h + 1}(1 - 2\alpha) < \alpha^h - \alpha^{h + 2}$.
    \item $b_0^1 = 0, b_0^2 = 1$. The maximal distance of points in such intervals is $\alpha^h(1 - 2\alpha) + 2\alpha^{h + 2} < \alpha^h - \alpha^{h + 2}$.
    \item $b_0^1 = b_0^0 = 0$. This is the only option where it is not that easy to see that the distance cannot increase. This iteration is such that $I(0^{k + h + 2})$ maps into $I(0^{k + h + 2} \lp 0^{h + 2} b) = I(0^{h + 2} b)$, so does $I(0^{h} 1 1 a)$ into $I(0^{h} 1 1 w \lp 0^{h + 2} b) = I(0^{h} 1 1 c_1 c_2 \ldots c_k)$, where $c_i$s satisfy
            \begin{equation}
                \label{eq:modula}
                \begin{split}
                    a_1 + b_1 + q_{0}&= c_1 + q_1\\
                    a_2 + b_2 + q_1 &= c_2 + q_2\\
                    \cdots\\
                    a_k + b_k + q_{k - 1} &= c_k + q_k,
                \end{split}
            \end{equation}
        where $q_0 = 0$ and $q_i$ is $0$ if $a_i + b_i + q_{i - 1} \leq 1$, otherwise $q_i = 1$. The number $c_i\in\{0, 1\}$ is congruent to $a_i + b_i + q_{i - 1}$ modulo $2$.

        The distance of left points of resulting intervals is $\alpha^h(1 - \alpha)(1 + \alpha + (c_1 - b_1)\alpha^{2} + \ldots + (c_k - b_k)\alpha^{k + 1})$; the distance of initial left points is $\alpha^{h}(1 - \alpha)(1 + \alpha + a_1\alpha^{2} + \ldots + a_k\alpha^{k + 1})$.
            \begin{itemize}
                \item[a)] For all $i:\ c_i - b_i = a_i$, the distance is the same.
                \item[b)] There is $n\geq 1$ such that for all $i < n:\ c_i - b_i = a_i$ and $c_n - b_n < a_n$. Therefore $a_n - (c_n - b_n)\geq 1$ and
                    \begin{align*}
                        (c_n - b_n) + &(c_{n + 1} - b_{n + 1})\alpha + \ldots + (c_k - b_k)\alpha^{k - n} \leq\\
												&\leq (c_n - b_n) + \alpha + \ldots + \alpha^{k - n} < (c_n - b_n) + 1 \leq a_n \leq\\
												&\leq a_n + a_{n + 1}\alpha + \ldots + a_k\alpha^{k - n}.
                    \end{align*}
                \item[c)] There is $n\geq 1$ such that for all $i < n:\ c_i - b_i = a_i$ and $c_n - b_n > a_n$. Therefore $c_n - b_n = 1, a_n = b_n = 0, q_{n - 1} = c_n = 1$ and $n\geq 2$. Moreover, for $i < n: c_i = \max\{a_i, b_i\}$ and $a_i\cdot b_i = 0$. Thus, $q_i = 0$ for all $i < n$, contrary to $q_{n - 1} = 1$.
            \end{itemize}
\end{itemize}
The proof is finished, since no other case can occur.
\end{proof}

From the previous lemma, whole trajectories of intervals $I(u)$ and $I(v)$ are $\ve$--close if and only if corresponding intervals $I(0^{k + h + 2})$ and $I(0^{h} 1 1 w)$ are $\ve$--close.

\begin{dosledok}
\label{dosledok:pocet_blizke_orbity}
Let $h\geq 0$ be such that $\alpha^{h + 1} < \ve \leq \alpha^h$ and $k \geq h + 2$. Let $u\in\Sigma^{k - h - 2}$ be such that $\dis(0^k, 0^h 1 1 u)\leq\ve$ and $\dis(0^k, 0^{h'} 1 1 v) > \ve$ for every $h'\leq h$ and $v\in\Sigma^{k - h' - 2}$ satisfying $0^h 1 1 u < 0^{h'} 1 1 v$. Then there are exactly $2^{k}(2\gamma(0^h 1 1 u) - 2^{k - h})$ pairs $u', v'\in\Sigma^k$ with $\dis_{\infty}(u', v')\leq \ve$.
\end{dosledok}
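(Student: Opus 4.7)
My plan is to partition the ordered pairs $(u',v')\in\Sigma^k\times\Sigma^k$ by the largest $h'\in\{0,1,\ldots,k\}$ such that $u'$ and $v'$ agree on their first $h'$ coordinates, where $h'=k$ corresponds to $u'=v'$. For $u'\neq v'$, Lemma~\ref{lema:trajektorie_dvoch_bodov_rozsah} already yields $\dis_{\infty}(u',v')<\alpha^{h'}$, and whenever $h'\leq k-2$, Lemma~\ref{lema:najvacsia_vzdialenost_cyklus_intervalov} furnishes a unique shift $n\in\{0,\ldots,2^k-1\}$ with $\{u'\lp n,\,v'\lp n\}=\{0^k,\,0^{h'}11w\}$ for some $w\in\Sigma^{k-h'-2}$ and $\dis_{\infty}(u',v')=\cores(0^{h'}11w)$. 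The series defining $\cores$ then provides the sharp two-sided bound $\alpha^{h'}-\alpha^{h'+2}\leq\cores(0^{h'}11w)<\alpha^{h'}$.

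I next separate three sub-cases using the hypothesis $\alpha^{h+1}<\ve\leq\alpha^h$. First, if $h'\geq h+1$, the upper bound gives $\dis_{\infty}(u',v')<\alpha^{h+1}<\ve$, so every such pair is $\ve$-close. Second, if $h'<h$, then $h'\leq h-1\leq k-3$, the normalized form applies and $\dis_{\infty}(u',v')\geq\alpha^{h-1}(1-\alpha^2)$; the inequality $1-\alpha^2>\alpha$, which follows from $\alpha<1/2$, pushes this strictly above $\alpha^h\geq\ve$, so no such pair is $\ve$-close. Third, if $h'=h$, then $\dis_{\infty}(u',v')\leq\ve$ is equivalent to $\cores(0^h11w)\leq\ve$; by~(\ref{eq:iff}), $\cores$ is monotone in the $\gamma$-order, and since the second sub-case contributes nothing, the maximality condition on $u$ in the statement reduces to saying that $u$ is the $\gamma$-largest word in $\Sigma^{k-h-2}$ with $\cores(0^h11u)\leq\ve$. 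Hence the valid $w$'s are precisely the $\gamma(u)$ words with $\gamma(w)\leq\gamma(u)$.

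For the final count, the first sub-case together with the diagonal $u'=v'$ consists precisely of the pairs whose first $h+1$ coordinates coincide, contributing $2^{h+1}(2^{k-h-1})^2=2^{2k-h-1}$ ordered pairs. The third sub-case contributes $2\cdot 2^k\cdot\gamma(u)$, with the factor $2^k$ accounting for the shift $n$ and the factor $2$ for the two choices of which of $u',v'$ plays the role of $0^k$. Summing gives $2^{k+1}\gamma(u)+2^{2k-h-1}$, and using the identity $\gamma(0^h11u)=\gamma(u)+3\cdot 2^{k-h-2}$ a short algebraic manipulation rewrites this as $2^k(2\gamma(0^h11u)-2^{k-h})$, the desired expression. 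The main obstacle is the second sub-case: ruling out the smaller $h'$ values genuinely uses $\alpha<1/2$ together with the sharp lower bound $\alpha^{h'}-\alpha^{h'+2}$ extracted from Lemma~\ref{lema:najvacsia_vzdialenost_cyklus_intervalov}, since the weaker estimate $(1-2\alpha)\alpha^{h'}$ from Lemma~\ref{lema:trajektorie_dvoch_bodov_rozsah} alone already fails once $\alpha>1/3$.
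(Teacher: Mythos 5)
Your proof is correct and takes essentially the approach the paper intends: the corollary is stated there without an explicit proof as a direct consequence of Lemma~\ref{lema:najvacsia_vzdialenost_cyklus_intervalov}, and your partition by the first position of disagreement, reduction to the normalized pairs $(0^k, 0^{h'}11w)$, and orbit count $2^{k+1}\gamma(u) + 2^{2k-h-1} = 2^k\left(2\gamma(0^h11u)-2^{k-h}\right)$ is exactly the intended computation. The only remark worth adding is that for $h'<h$ the exclusion also follows immediately from the corollary's hypothesis, since every word $0^{h'}11v$ with $h'<h$ exceeds $0^h11u$ in the $\gamma$--order; your explicit bound $\alpha^{h'}-\alpha^{h'+2}>\alpha^h\geq\ve$ simply shows that this part of the hypothesis is automatically satisfied.
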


The next lemma shows that if the approximation $f_k$ of $f$ gets better, the ratio of pairs of words $u, v$ such that $\dis(u, v)\leq \ve$ and $\dis_{\infty}(u, v) > \ve$ to all pairs of words of the corresponding length cannot get much smaller. In fact, if such ratio for an approximation $f_k$ is $c/2^{2k}$, the ratio for some better approximation has to be at least $c/2^{2k + 1}$.

\begin{lema}
\label{lema:pocet_daleke_bodky}
Let $k\geq 1$ and $u, v\in\Sigma^k$ be the words satisfying $\dis(u, v)\leq \ve$. Then for every $m > 0$ there is at least $2^{m - 1}(1 + 2^m)$ pairs of words $\hat{u}, \hat{v}\in\Sigma^m$ such that $\dis(u\hat{u}, v\hat{v})\leq\ve$. In addition, if $\dis_{\infty}(u, v) > \ve$, then for all $m\geq 0$ and $\hat{u}, \hat{v}\in\Sigma^m$ the distance $\dis_{\infty}(u\hat{u}, v\hat{v}) > \ve$.
\end{lema}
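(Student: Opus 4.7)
The plan is to exploit the decomposition $\cores(u\hat{u}) = \cores(u) + \alpha^k\cores(\hat{u})$, which gives the identity $\dis(u\hat{u}, v\hat{v}) = |a + \alpha^k b|$ with $a = \cores(u) - \cores(v)$ and $b = \cores(\hat{u}) - \cores(\hat{v})$. The two parts of the lemma can then be handled by essentially disjoint arguments.

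For the counting statement, I would separate the $2^{2m}$ ordered pairs $(\hat{u}, \hat{v})$ into the $2^m$ diagonal pairs (for which $\hat{u} = \hat{v}$, so $b = 0$ and $\dis(u\hat{u}, v\hat{v}) = \dis(u, v) \leq \ve$ automatically) and the $\binom{2^m}{2}$ unordered off-diagonal pairs. For each unordered off-diagonal pair $\{\hat{u}, \hat{v}\}$ I would show that at least one of the two orderings satisfies the $\ve$-bound. The two orderings give distances $|a + \alpha^k b|$ and $|a - \alpha^k b|$, whose minimum equals $\bigl||a| - \alpha^k|b|\bigr|$. We already have $|a| \leq \ve$, and $\alpha^k|b| \leq \alpha^k(1-\alpha^m) < \alpha^k$; the key estimate is then $\alpha^k < \ve$, which I would derive from the fact that any two distinct words of $\Sigma^k$ are at distance at least $(1-\alpha)\alpha^{k-1}$ (the minimum being realised by words differing only at the last position) together with $\alpha < 1/2$, giving $\ve \geq \dis(u, v) \geq (1-\alpha)\alpha^{k-1} > \alpha^k$. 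Thus both $|a|$ and $\alpha^k|b|$ lie in $[0, \ve]$, their absolute difference is $\leq \ve$, and the total count is
\[
2^m + 2^{m-1}(2^m - 1) = 2^{m-1}(1 + 2^m).
\]

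For the $\dis_{\infty}$ statement, let $h$ be the first position at which $u$ and $v$ disagree, so $h \leq k-1$. The case $m = 0$ is trivial; for $m \geq 1$ we have $h \leq (k+m) - 2$, so Lemma~\ref{lema:najvacsia_vzdialenost_cyklus_intervalov} applied inside $\Sigma^{k+m}$ locates an iteration $n^*$ at which $\{u\hat{u} \lp n^*,\, v\hat{v} \lp n^*\} = \{0^{k+m},\, 0^h 1 1 w^*\}$ for some $w^* \in \Sigma^{k+m-h-2}$, and this iteration realises $\dis_{\infty}(u\hat{u}, v\hat{v})$. Reducing $n^*$ modulo $2^k$ and applying Lemma~\ref{lema:najvacsia_vzdialenost_cyklus_intervalov} inside $\Sigma^k$ (with the case $h = k-1$ handled directly, since then $\dis_{\infty}(u,v) = (1-\alpha)\alpha^{k-1}$), the first $k$ coordinates of the iterated pair form $\{0^k, 0^h 1 1 w\}$ with $\dis(0^k, 0^h 1 1 w) = \dis_{\infty}(u, v)$. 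Writing $w^* = w q$ with $q \in \Sigma^m$, the decomposition yields
\[
\dis_{\infty}(u\hat{u}, v\hat{v}) = \dis(0^{k+m}, 0^h 1 1 w q) = \dis(0^k, 0^h 1 1 w) + \alpha^k\cores(q) \geq \dis_{\infty}(u, v) > \ve.
\]

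The main obstacle is the quantitative bound $\alpha^k < \ve$ used in Part~1: it requires identifying the correct minimum distance between distinct words of length $k$ and invoking $\alpha < 1/2$ at the right place. It also implicitly presumes $u \neq v$, which is consistent with how the lemma is combined with its second part, since $u = v$ forces $\dis_{\infty}(u, v) = 0$ and makes any application of the $\dis_{\infty} > \ve$ hypothesis vacuous. Everything else is a routine unpacking of the $\cores$ identity and the maximising-iteration machinery supplied by Lemma~\ref{lema:najvacsia_vzdialenost_cyklus_intervalov}.
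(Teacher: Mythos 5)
Your proof is correct and, for the second assertion, essentially coincides with the paper's: both identify (via Lemma~\ref{lema:najvacsia_vzdialenost_cyklus_intervalov}) the iteration sending $u\hat u$ to $0^{k+m}$ and $v\hat v$ to $0^h11w\hat w$, and both conclude from $\cores(0^h11w\hat w)\geq\cores(0^h11w0^m)$ that the realised distance can only grow; you work down from the long words where the paper works up from the short ones, and you are in fact more careful than the paper about the degenerate cases ($h=k-1$, small $m$) that Lemma~\ref{lema:najvacsia_vzdialenost_cyklus_intervalov} does not literally cover. The counting argument is where you genuinely diverge. The paper fixes $u<v$ once and for all, observes via~(\ref{eq:iff}) that $u\hat u<v\hat v$ always, and simply takes the $\sum_{\hat u}\gamma(\hat u)=2^{m-1}(1+2^m)$ ordered pairs with $\hat v\leq\hat u$, for which the signed identity $\dis(u\hat u,v\hat v)=\dis(u,v)+\alpha^k(\cores(\hat v)-\cores(\hat u))$ makes the distance decrease outright --- no quantitative input is needed. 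Your symmetric ``one of the two orderings works'' argument reaches the same count but must pay for its symmetry with the extra estimate $\alpha^k|b|\leq\alpha^k<(1-\alpha)\alpha^{k-1}\leq\dis(u,v)\leq\ve$; that estimate is true (it is the gap-sequence lower bound $\dis(u,v)\geq\alpha^k+\alpha^{k-1}(1-2\alpha)$ for distinct words plus $\alpha<1/2$) and in fact gives you $\alpha^k|b|<|a|$, so the minimising ordering has distance $|a|-\alpha^k|b|\leq\ve$ directly. Both routes tacitly require $u\neq v$ (the paper for its ``WLOG $u<v$'', you for the separation bound), which you rightly flag; the paper's version is the leaner of the two, yours makes the implicit hypotheses more visible.
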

\begin{proof}
Without loss of generality, we may assume that $u < v$. Then $\dis(u, v) = \cores(v) - \cores(u)$. From~(\ref{eq:iff}), we have $u\hat{u} < v\hat{v}$ for every $\hat{u}, \hat{v}\in\Sigma^m$ and therefore
\begin{align*}
\dis(u\hat{u}, v\hat{v}) 	&= \cores(v\hat{v}) - \cores(u\hat{u}) = \cores(v) - \cores(u) + \alpha^k\cdot(\cores(\hat{v}) - \cores(\hat{u})) = \\
													&= \dis(u, v) + \alpha^k(\cores(\hat{v}) - \cores(\hat{u})).
\end{align*}
Clearly, there are $\gamma(\hat{u})$ words $\hat{v}$ with the property that $\hat{v}\leq\hat{u}$. Then, by~(\ref{eq:iff}), $\cores(\hat{v}) \leq \cores(\hat{u})$ and, from the identity above, $\dis(u\hat{u}, v\hat{v})\leq \dis(u, v) \leq \ve$. Therefore there are at least $\sum_{\hat{u}\in\Sigma^m} \gamma(\hat{u}) = 2^{m - 1}(1 + 2^m)$ pairs of words from $\Sigma^m$ which satisfy $\dis(u\hat{u}, v\hat{v})\leq \ve$.

From Lemma~\ref{lema:najvacsia_vzdialenost_cyklus_intervalov}, the distance $\dis_{\infty}(u, v) = \dis(0^k, 0^{h} 1 1 w)$ where $h\geq 0$ is such that $u_1^{h} = v_1^{h}$ and $u_{h + 1}\neq v_{h + 1}$ and $w$ is a word of the length $k - h - 2$. Without loss of generality, assume that $u \lp n = 0^k$ and $v \lp n = 0^h 1 1 w$, i.e.~$v \lp (\lm u) = 0^h 1 1 w$. Then $u\hat{u}\lp (\lm u\hat{u}) = 0^{k + m}$ and $v\hat{v} \lp (\lm u\hat{u}) = 0^h 1 1 w \hat{w}$ where $\hat{w}\in\Sigma^m$. Since $\dis(0^{k + m}, 0^h 1 1 w 0^m) = \dis(0^k, 0^h 1 1 w) > \ve$, it follows that $\dis(0^{k + m}, 0^h 1 1 w \hat{w}) > \ve$ for every $\hat{w}\in\Sigma^m$.
\end{proof}

The next lemma shows that if $\alpha > 1/3$, then the pairs of words from the previous lemma always exist. Informally put, there are pairs of intervals which are $\ve$--close, but after a finite number of iterations they are $\ve$--distant.

\begin{lema}
\label{lema:existence_pair}
Let $1/3 < \alpha < 1/2$ and $\ve < 1$. Then there are $k > 0$ and $u, v\in\Sigma^k$ such that $\dis(u, v)\leq\ve$ and $\dis_{\infty}(u, v) > \ve$.
\end{lema}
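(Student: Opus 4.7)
The plan is to exhibit, for each $\ve \in (0, 1)$, explicit words $u, v \in \Sigma^{k+h+2}$ with $\dis(u, v) \leq \ve < \dis_{\infty}(u, v)$, for a scale $h$ and padding $k$ depending on $\ve$.

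First I would use the hypothesis $\alpha > 1/3$ to pick $h$. Since $\alpha > 1/3$ is equivalent to $\alpha > 1 - 2\alpha$, we have $\alpha^{h+1} > \alpha^h(1-2\alpha)$ for every $h \geq 0$, so the intervals $[\alpha^h(1-2\alpha), \alpha^h)$ for $h \geq 0$ cover $(0, 1)$. I would pick $h$ with $\alpha^{h+1} \leq \ve < \alpha^h$, which forces $\alpha^h(1-2\alpha) < \ve < \alpha^h$; then split on whether $\ve < \alpha^h(1-\alpha^2)$ (Case A) or $\ve \geq \alpha^h(1-\alpha^2)$ (Case B).

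In Case A, I would take $u = 0^h 0 1^{k+1}$ and $v = 0^h 1 0^{k+1}$. Expanding $\cores$ gives $\dis(u, v) = \alpha^h(1-2\alpha) + \alpha^{k+h+2}$, which is at most $\ve$ once $k$ is large. Tracking the carries in $\lm u$ yields $\lm u = 0^{h+1} 1\,0^k$ and hence $v \lp \lm u = 0^h 1\,1\,0^k$; Lemma~\ref{lema:najvacsia_vzdialenost_cyklus_intervalov} then gives $\dis_{\infty}(u, v) = \cores(0^h 1\,1\,0^k) = \alpha^h(1-\alpha^2) > \ve$. In Case B, I would take $u = 0^h 0\,1\,0^k$ and $v = 0^h 1\,0^{k+1}$, so that $\dis(u, v) = \alpha^h(1-\alpha)^2$; since $(1-\alpha)^2 < 1-\alpha^2$, this is below $\alpha^h(1-\alpha^2) \leq \ve$. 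An analogous carry calculation gives $\lm u = 0^{h+1}\,1^{k+1}$ and $v \lp \lm u = 0^h\,1^{k+2}$, so $\dis_{\infty}(u, v) = \alpha^h - \alpha^{k+h+2} > \ve$ once $k$ satisfies $\alpha^{k+h+2} < \alpha^h - \ve$ (possible because $\ve < \alpha^h$).

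The main obstacle is computing $\lm u$ and $v \lp \lm u$ explicitly, since the left-to-right addition on $\Sigma^{k+h+2}$ propagates carries rightward in a way that is not immediately transparent. In both constructions, however, the word $u$ has a very simple block structure, so the carries in $u' \lp 1\,0^{k+h+1}$ (with $u'$ the bitwise complement of $u$) cascade through the leading block of ones in $u'$ and terminate at the first one of $u$ past the initial zero block. This yields an explicit closed form for $\lm u$, after which the addition $v \lp \lm u$ proceeds without further carries and Lemma~\ref{lema:najvacsia_vzdialenost_cyklus_intervalov} reduces $\dis_{\infty}$ to a single $\cores$ evaluation.
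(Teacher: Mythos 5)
Your proposal is correct and follows essentially the same route as the paper's proof: both fix the scale $h$ from $\alpha^{h+1}\lessgtr\ve\lessgtr\alpha^h$ (using $\alpha>1/3$ to ensure $\ve>\alpha^h(1-2\alpha)$), exhibit explicit witness words of the form $0^{h+1}1\cdots$ versus $0^h 1 0\cdots$, and evaluate $\dis_\infty$ through Lemma~\ref{lema:najvacsia_vzdialenost_cyklus_intervalov}. The only differences are cosmetic — you split on $\ve$ versus $\alpha^h(1-\alpha^2)$ with both witness pairs fully explicit (and your carry computations $\lm(0^{h+1}1^{k+1})=0^{h+1}10^{k}$, $\lm(0^{h+1}10^{k})=0^{h+1}1^{k+1}$ check out), whereas the paper splits on $\ve<\alpha^h$ versus $\ve=\alpha^h$ and, when the first pair fails, switches to $u=0^k$ instead of naming a second pair.
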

\begin{proof}
Let $h \geq 0$ be such that $\alpha^{h + 1} < \ve \leq \alpha^h$. If $\ve < \alpha^h$, let $k > 0$ satisfy
\begin{align*}
\alpha^h(1 - 2\alpha) + \alpha^k \leq\ve < \alpha^h - \alpha^k,
\end{align*}
i.e.~let intervals $I(0^{h + 1} 1^{k - h - 1})$ and $I(0^h 1 0^{k - h - 1})$ be $\ve$--close, and $I(0^k)$ and $I(0^h 1^{k - h})$ be $\ve$--distant. If $\ve = \alpha^h$, then from assumption, $h > 0$. In this case, let $k$ satisfy
\begin{align*}
\alpha^{h - 1}(1 - 2\alpha) + \alpha^k \leq\alpha^h,
\end{align*}
i.e.~let intervals $I(0^h 1^{k - h})$ and $I(0^{h - 1} 1 0^{k - h})$ be $\ve$--close.

First, assume that $\ve < \alpha^h - \alpha^k$, i.e.~intervals $I(0^k)$ and $I(0^h 1^{k - h})$ are $\ve$--distant. Consider $u = 0^{h + 1} 1 ^{k - h - 1}$ and $v = 0^h 1 0^{k - h - 1}$. From assumptions, $\dis(u, v) = \alpha^h(1 - 2\alpha) +  \alpha^k \leq \ve$. Let $w\in \Sigma^{k - h - 2}$ be such that $\dis_{\infty}(u, v) = \dis(0^k, 0^h 1 1 w)$. If $\dis(0^k, 0^h 1 1 w) > \ve$, then the desired pair of words exists. Otherwise, set $u = 0^k$. Thus
\begin{align*}
\dis(u, v) = \dis(0^k, 0^h 1 0^{k - h - 1})\leq \dis(0^k, 0^h 1 1 w)\leq \ve.
\end{align*}
Let $n$ be such that $n\cdot 1 0^{k - 1} = 0^h 1^{k - h}$, then $u \lp n = 0^h 1^{k - h}$ and $v \lp n = 0^k$. Thus, $\dis_{\infty}(u, v) = \dis(0^k, 0^h 1^{k - h}) > \ve$.

Let $\ve = \alpha^h$. Put $u = 0^h 1^{k - h}$ and $v = 0^{h - 1} 1 0^{k - h}$. Then $\dis(u, v) = \alpha^{h - 1}(1 - 2\alpha) + \alpha^k\leq \ve$. For all $w\in\Sigma^{k - h - 1}$ we have $\dis(0^k, 0^{h - 1} 1 1 w)\geq \alpha^{h - 1} - \alpha^{h + 1} > \alpha^h = \ve$. Therefore, from Lemma~\ref{lema:najvacsia_vzdialenost_cyklus_intervalov}, it is true that $\dis_{\infty}(u, v) > \ve$, and $u, v$ are those desired words.
\end{proof}

Let $i\geq 1$ and $j\geq 0, m\geq 0$ be such that $i = 2^{j}\cdot (2m + 1)$. Then we call the sequence $(a_i)_{i\in\NNN}$, given by $a_i = (1/\alpha)^j$, the \emph{gap sequence}.

Let $u\in \Sigma^k$ and $u\neq 1^k$. Denote the gap between intervals $I(u)$ and $I(u\pp 1)$, that is the distance between the right point of $I(u)$ and the left point of $I(u \pp 1)$, by $b_{\gamma(u)|k}$. Since $I(v) = [\cores(v), \cores(v) + \alpha^k]$ for each $v\in\Sigma^h$, the gap between $I(u0)$ and $I(u1)$ is $\alpha^{k}(1 - 2\alpha)$. The original gap between $I(u)$ and $I(u\pp 1)$ is a new gap between $I(u1)$ and $I((u\pp 1)0)$. Therefore $b_{\gamma(u1)|k + 1} = b_{\gamma(u)|k}$ and $b_{\gamma(u0)|k + 1} = \alpha^{k}(1 - 2\alpha)$. We have $\gamma(u1) = 2\cdot\gamma(u)$ and $\gamma(u0) = \gamma(u1) - 1$.

If $\gamma(v^{(k)})$ is odd, the gap $b_{\gamma(v^{(k)})|k} = \alpha^{k - 1}(1 - 2\alpha) = \alpha^{k - 1 - 0}(1 - 2\alpha)$. If $v^{(k)} = v^{(k - 1)}1$ and if $\gamma(v_1^{k - 1})$ is odd, then $b_{\gamma(v^{(k)})|k} = b_{v^{(k - 1)}|k - 1} = \alpha^{k - 2}(1 - 2\alpha) = \alpha^{k - 1 - 1}(1 - 2\alpha)$. Similarly, if $v^{(k)} = v^{(k')}1^{k - k'}$ where $\gamma(v^{(k')})$ is odd, then $b_{\gamma(v^{(k)})|k} = b_{v^{(k')}|k'} = \alpha^{k' - 1}(1 - 2\alpha) = \alpha^{k - 1 - (k - k')}(1 - 2\alpha)$.

Since $u\neq 1^k$, there exists $k'\geq 0$ such that $u = v^{(k - k' - 1)} 0 1^{k'}$. By an easy computation, $\gamma(u) = 2^{k'} + n\cdot 2^{k' + 1}$ where $n\geq 0$ depends only on $v^{(k - k' - 1)}$. The number $\gamma(v^{(k - k' - 1)}0) = 1 + n\cdot 2$ is odd regardless of $v^{(k - k' - 1)}$. Therefore $a_{\gamma(u)} = \alpha^{-k'}$ and
\begin{align*}
b_{\gamma(u)|k} &= b_{\gamma(v^{(k - k' - 1)}0 1^{k'})|k} = b_{v^{(k - k' - 1)}0|k - k'} = \alpha^{k - 1 - k'}(1 - 2\alpha) = \\
								&= a_{\gamma(u)}\cdot\alpha^{k - 1}(1 - 2\alpha).
\end{align*}
It follows that for $u, v\in\Sigma^k$ where $u\leq v$,
\begin{equation}
\label{eq:gap_seq_dis}
\dis(u, v) = (\gamma(v) - \gamma(u))\cdot\alpha^k + \alpha^{k - 1}\cdot(1 - 2\alpha)\cdot\sum_{i = \gamma(u)}^{\gamma(v) - 1}a_i.
\end{equation}

Let $(a_i)$ be a gap sequence. Fix $m > 0$, $1\leq s < 2^m$ and $h\geq 0$. Let $s = 2^j(2i + 1)$ for some $i, j\leq 0$. Since $2^m > 2^j(2i + 1)$, we know that $2^{m - j}\geq 2i + 2$. Thus $2^{m - j - 1} - i - 1 \geq 0$ and
\begin{align*}
h 2^{m} + s  &= 2^j (2\cdot(h 2^{m - j - 1} + i) + 1),\\
h 2^{m} - s  &= 2^j (2\cdot(h 2^{m - j - 1} - i - 1) + 1).
\end{align*}
Therefore
\begin{equation*}
\label{lema:a_s}
    a_{s} = a_{h\cdot 2^{m} \pm s}.
\end{equation*}
Note that for all $m' > m > 0$ and $h' > 0$ there is $h > 0$ such that $h'\cdot 2^{m'} = h\cdot 2^m$, hence
\begin{align*}
a_s = a_{h_1\cdot 2^m + h_2\cdot 2^{m + 1} + h_3\cdot 2^{m + 2} + \ldots + h_j\cdot 2^{m + j - 1} \pm s}
\end{align*}
for all $j\geq 1$ and $h_1, h_2, \ldots, h_j \geq 0$.

Put $A(m, n) = \sum_{i = m}^{m + n - 1} a_i$ where $(a_i)$ is a gap sequence. The following lemma states that the sum of the first $n$ elements of gap sequence, i.e.~$A(1, n)$, is the smallest among all sums of $n$ successive elements.

\begin{lema}
\label{lema:postupnost_najmensie}
Let $(a_i)$ be a gap sequence. Then for $n, s > 0$,
\begin{align*}
A(1, n) \leq A(s, n).
\end{align*}
\end{lema}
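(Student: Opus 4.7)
My plan is to rewrite $A(s,n)$ in a form where the comparison with $A(1,n)$ reduces to a term-by-term inequality, exploiting the 2-adic structure of the gap sequence. First, observe that by the defining relation $i = 2^j(2m+1)$, the exponent $j$ is precisely the 2-adic valuation of $i$, so $a_i = (1/\alpha)^{j(i)}$ where $j(i) = \max\{j \geq 0 : 2^j \mid i\}$. For each $k \geq 0$ define the level-set count
\[
N_k(s,n) = \card\{i : s \leq i \leq s+n-1,\ 2^k \mid i\}.
\]
The number of indices in $[s, s+n-1]$ with valuation exactly $k$ equals $N_k(s,n) - N_{k+1}(s,n)$, so grouping by valuation,
\[
A(s,n) = \sum_{k \geq 0}\bigl(N_k(s,n) - N_{k+1}(s,n)\bigr)(1/\alpha)^k.
\]

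The key elementary input is that any $n$ consecutive positive integers contain at least $\lfloor n/2^k \rfloor$ multiples of $2^k$, whereas $[1,n]$ contains exactly that many. Hence
\[
N_k(s,n) \geq N_k(1,n) = \lfloor n/2^k \rfloor \quad \text{for all } k \geq 0,\ s \geq 1.
\]

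Next I would apply an Abel-type rearrangement to the expression for $A(s,n)$. Since $N_0(s,n) = n$, a short manipulation collapses the sum to
\[
A(s,n) = n + (1/\alpha - 1)\sum_{k \geq 1} N_k(s,n)\,(1/\alpha)^{k-1}.
\]
Because $1/\alpha - 1 > 0$ (as $\alpha < 1/2$) and each $(1/\alpha)^{k-1}$ is positive, the coefficient-wise inequality $N_k(s,n) \geq N_k(1,n)$ gives
\[
A(s,n) - A(1,n) = (1/\alpha - 1)\sum_{k \geq 1}\bigl(N_k(s,n) - N_k(1,n)\bigr)(1/\alpha)^{k-1} \geq 0.
\]

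The only point needing care is the counting bound $N_k(s,n) \geq \lfloor n/2^k \rfloor$, which I would verify by writing $n = q\cdot 2^k + r$ with $0 \leq r < 2^k$: any window of $n$ consecutive positive integers contains at least $q$ multiples of $2^k$ (one from each complete block of $2^k$ successive integers), while the window $[1,n]$ achieves equality exactly. Everything else is bookkeeping, so I do not anticipate a real obstacle.
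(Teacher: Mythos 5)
Your proof is correct, and it takes a genuinely different route from the paper's. You exploit the identity $a_i = (1/\alpha)^{v_2(i)}$ to regroup $A(s,n)$ by $2$-adic valuation and, after an Abel rearrangement, reduce the statement to the single counting fact that any window of $n$ consecutive integers contains at least $\lfloor n/2^k\rfloor$ multiples of $2^k$, with equality for the window $[1,n]$; this fact follows from superadditivity of the floor function, and positivity of $1/\alpha - 1$ does the rest. The paper instead argues directly on the sequence: it first treats $n = 2^j$ by shifting the window to the form $A(2^m - h, 2^j)$, using the reflection/periodicity identity $a_{2^m - i} = a_{2^m + 2^j - i}$ to rewrite this as $a_{2^m} + A(1, 2^j - 1) \geq a_{2^j} + A(1, 2^j-1) = A(1,2^j)$, and then handles general $n$ by splitting along the binary expansion $n = 2^{j_k} + \cdots + 2^{j_1}$. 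Your argument is more self-contained and arguably cleaner, since it avoids the case analysis on $n$ and the maximality argument for $m$; the paper's approach has the side benefit of developing the shift identities for $A(\cdot,\cdot)$ that are reused in Lemma~\ref{lema:postupnost_najvacsie} and in the verification of the patterns $B_1$, $C_0$, $C_1$. The only points requiring care in your write-up, both of which you have essentially addressed, are that all the sums over $k$ are finite (so the rearrangement is legitimate) and that the bound $N_k(s,n)\geq\lfloor n/2^k\rfloor$ holds for every $s\geq 1$, which follows from $\lfloor (s-1+n)/2^k\rfloor - \lfloor (s-1)/2^k\rfloor \geq \lfloor n/2^k\rfloor$.
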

\begin{proof}
For every $\alpha$, the ratio $1/\alpha > 1$. Hence $a_{2^k} > a_{2^l}$ for every $k > l$.

Let $n = 2^j (2 m' + 1)$. First, consider $m' = 0$, i.e.~$n=2^j$ for some $j\geq 0$. If $j = 0$, the statement holds trivially, therefore assume that $j\geq 1$. Then for some $m \geq 0$ and $h < 2^j$,
\begin{align*}
A(s, 2^j) = A(2^m - h, 2^j).
\end{align*}
Let $m$ be the maximal such integer. For every $j$, we have $a_{2^j + k_1\cdot 2^{j + 1}} = a_{2^j}, a_{2^{j + 1} + k_2\cdot 2^{j + 2}} = a_{2^{j + 1}}, \ldots, a_{2^{j + t - 1} + k_t\cdot 2^{j + t}} = a_{2^{j + t - 1}}, \ldots$ where $k_i\geq 0$. Hence $a_{k\cdot 2^j} \geq a_{2^j}$ for all $k\geq 1$. Then for every $s > 0$, there is $0\leq k' < 2^j$ with the property that $a_{s + k'} = a_{k\cdot 2^j} \geq a_{2^j}$ for some $k\geq 1$. From that, $2^j\leq 2^m$.

From previous Lemma, $a_{2^m - i} = a_{2^m + 2^j - i}$ for every $i\leq h$. Put $\sum_{i = k}^l a_i = 0$ for every $k > l$. From that,
\begin{align*}
A(2^m - h, 2^j) & = A(2^m - h, h) + A(2^m, 2^j - h) = A(2^m, 2^j - h) + A(2^m + 2^j - h, h) = \\
				& = A(2^m, 2^j) = a_{2^m} + A(1, 2^j - 1).
\end{align*}
It follows from the above that $a_{2^m}\geq a_{2^j}$ and $A(2^m - h, 2^j) \geq A(1, 2^j)$.

Let $m > 0$ and $j\geq 0$. There are unique integers $0\leq j = j_1 < j_2 < \ldots < j_k$, $k\geq 2$ satisfying $n = 2^{j_k} + 2^{j_{k - 1}} + \ldots + 2^{j_1}$. Finally,
\begin{align*}
A(s, n)	& = A(s, 2^{j_k}) + A(s + 2^{j_k}, 2^{j_{k - 1}}) + \ldots + A(s + 2^{j_k} + 2^{j_{k - 1}} + \ldots + 2^{j_2}, n) \geq \\
		& \geq A(1, 2^{j_k}) + A(1, 2^{j_{k - 1}}) + \ldots + A(1, 2^{j_i}) = \\
		& = A(1, 2^{j_k}) + A(2^{j_k} + 1, 2^{j_{k - 1}}) + \ldots + A(2^{j_k} + 2^{j_{k - 1}} + \ldots + 2^{j_2} + 1, 2^{j_1}) =\\
		& = A(1, n).
\end{align*}
The lemma has now been proved.
\end{proof}

From the previous lemma, we know which sums of parts of gap sequence are the smallest ones. The next lemma shows that the sum of short enough part of gap sequence, beginning with some power of two, is the biggest among all sums of equally long parts of sequence containing some multiple of this power.

\begin{lema}
\label{lema:postupnost_najvacsie}
Let $(a_i)$ be a gap sequence. Let $m \geq 0$, $s > m$, $t\geq 0$, $2^{m} < n \leq 2^s$ and $0\leq h < 2^m$, then
\begin{align*}
A(2^m, n - 2^m) \geq A(2^m + t\cdot 2^{s} - h, n - 2^m).
\end{align*}
\end{lema}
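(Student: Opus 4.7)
The plan is to first reduce the problem to the case $t = 0$ and then to prove $A(2^m, N) \geq A(2^m - h, N)$, where $N = n - 2^m$, splitting on whether $h \leq N$ or $h > N$. For the reduction, since $h < 2^m$ and $N \leq 2^s - 2^m$, every index $i$ appearing in $A(2^m + t\cdot 2^s - h, N)$ satisfies $i - t\cdot 2^s \in [2^m - h, 2^m - h + N - 1] \subseteq [1, 2^s - 1]$, hence has $2$-adic valuation strictly less than $s$. The identity $a_r = a_{t\cdot 2^s + r}$ for $1 \leq r < 2^s$ (the $h$-generalization of $a_s = a_{h\cdot 2^m \pm s}$ stated just before Lemma~\ref{lema:postupnost_najmensie}) applies term by term and yields $A(2^m + t\cdot 2^s - h, N) = A(2^m - h, N)$.

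For $0 \leq h \leq N$ I would split $A(2^m - h, N)$ at the index $2^m$, obtaining $A(2^m - h, N) = A(2^m - h, h) + A(2^m, N - h)$. The local symmetry $a_{2^m - k} = a_k$ (the ``$-$'' case of $a_s = a_{h\cdot 2^m \pm s}$) applied for $k \in [1, h] \subseteq [1, 2^m - 1]$ collapses the first piece to $A(1, h)$. Splitting $A(2^m, N) = A(2^m, N - h) + A(2^m + N - h, h)$ in the same way reduces the desired inequality to $A(2^m + N - h, h) \geq A(1, h)$, which is exactly Lemma~\ref{lema:postupnost_najmensie}.

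The remaining case $h > N$ (which forces $N < 2^m$) is the more subtle one. Now $A(2^m - h, N)$ has all its indices in $[1, 2^m - 1]$, and the symmetry $a_{2^m - k} = a_k$ rewrites it as $A(h - N + 1, N)$ with starting index in $[1, 2^m - N]$. It therefore suffices to prove the auxiliary claim: for every $m \geq 1$, $1 \leq N \leq 2^m - 1$, and $r \in [1, 2^m - N]$, one has $A(r, N) \leq a_{2^m} + A(1, N - 1) = A(2^m, N)$. I would prove this by induction on $m$, with the base $m = 1$ an immediate check. In the inductive step I branch on the position of $[r, r + N - 1]$ relative to $2^{m-1}$. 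If it lies entirely below $2^{m-1}$, the hypothesis with $m - 1$ together with $a_{2^{m-1}} < a_{2^m}$ finishes. If it lies entirely above $2^{m-1}$, the symmetry $a_i = a_{2^m - i}$ reflects it into the previous sub-case. If it straddles $2^{m-1}$, writing $p = 2^{m-1} - r + 1$ and applying the symmetry around $2^{m-1}$ yields $A(r, N) = a_{2^{m-1}} + A(1, p - 1) + A(1, N - p)$; the identity $A(1, N - 1) = A(1, p - 1) + A(p, N - p) \geq A(1, p - 1) + A(1, N - p)$, whose inequality is Lemma~\ref{lema:postupnost_najmensie}, then closes the case.

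The main obstacle is precisely this last step: when $h > N$ the shifted sum has no index in common with the anchor $a_{2^m}$ of the left-hand side, so the advantage must come entirely from the scale gap $a_{2^m} = (1/\alpha)\cdot a_{2^{m-1}}$, and extracting it cleanly requires the nested-dyadic induction above rather than a single direct estimate.
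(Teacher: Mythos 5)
Your proposal is correct. The reduction to $t=0$ via the periodicity identity $a_r = a_{t\cdot 2^s + r}$, and your treatment of the range $0\leq h\leq n-2^m$ (split both windows at the index $2^m$, collapse $A(2^m-h,h)$ to $A(1,h)$ by the reflection $a_{2^m-k}=a_k$, and invoke Lemma~\ref{lema:postupnost_najmensie} for $A(1,h)\leq A(n-h,h)$), coincide exactly with the paper's argument. Where you genuinely go beyond the paper is the regime $h>n-2^m$: the paper writes only the decompositions $A(2^m,n-2^m)=A(2^m,n-2^m-h)+A(n-h,h)$ and $A(2^m-h,n-2^m)=A(2^m-h,h)+A(2^m,n-2^m-h)$, which presuppose $h\leq n-2^m$; when $h>n-2^m$ the window $A(2^m-h,n-2^m)$ ends strictly below $2^m$ and both splits break down, so the paper's proof as written does not cover this case. (In the paper's only application, pattern $C_1$ of Lemma~\ref{lema:postupnost_gulky}, the hypothesis $n<\gamma(u)-2^{s_{\gamma(u)}}$ forces the shift to be smaller than the window length, so the omitted case is never actually invoked — but the lemma as stated permits any $0\leq h<2^m$.) Your auxiliary claim, that $A(r,N)\leq a_{2^m}+A(1,N-1)=A(2^m,N)$ for every window of length $N$ contained in $[1,2^m-1]$, proved by induction on $m$ with the three-way branch on the position of the window relative to $2^{m-1}$, is correct (I checked the straddling case: $A(r,N)=a_{2^{m-1}}+A(1,p-1)+A(1,N-p)\leq a_{2^{m-1}}+A(1,N-1)<A(2^m,N)$, the middle inequality again being Lemma~\ref{lema:postupnost_najmensie}), and it cleanly closes the gap. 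So your write-up is not merely a reproduction of the paper's proof but a completion of it.
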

\begin{proof}
By~(\ref{lema:a_s}), $a_{2^m + t\cdot 2^s - q_1} = a_{2^m - q_1}$ and $a_{t\cdot 2^s + q_2} = a_{q_2}$ for every $0\leq q_1 < 2^m$ and $0 < q_2 < 2^s$. From assumptions, $1\leq n - h - 1 < 2^s$. Therefore
\begin{align*}
A(2^m + t\cdot 2^s - h, n - 2^m) = A(2^m - h, n - 2^m).
\end{align*}
Hence, we can conclude that $t = 0$.

If $h = 0$, the proof is complete. Assume that $h > 0$, then
\begin{align*}
&A(2^m, n - 2^m) = A(2^m, n - 2^m - h) + A(n - h, h)\qquad\text{and}\qquad \\
&A(2^m - h, n - 2^m) = A(2^m - h, h) + A(2^m, n - 2^m - h).
\end{align*}
It remains to show that $A(2^m - h, h) \leq A(n - h, h)$. Since $0 < h < 2^m$, the first sum is equal to $A(1, h)$. Applying Lemma~\ref{lema:postupnost_najmensie} we complete the proof.
\end{proof}

\noindent
For $i > 1$, denote by $s_i$ an integer satisfying $2^{s_i} < i \leq 2^{s_i + 1}$. We say that the zero-one matrix $M$ of dimension $2^k\times 2^k$ contains
	\begin{itemize}
		\item \emph{pattern $A_0$} if $M[i, j] = 0$ implies
            \begin{align*}
                M[i + n, j] = M[i, j + m] = 0
            \end{align*}
            for $n \leq 2^k - i, m \leq 2^k - j$;
		\item \emph{pattern $A_1$} if $M[i, j] = 1$ implies
            \begin{align*}
                M[i - n, j] = M[i, j - m] = 1
            \end{align*}
            for $n \leq i - j, m \leq j - i$;
		\item \emph{pattern $B_0$} if $M[1, j] = 0$ implies
            \begin{align*}
                M[1 + n, j + n] = 0
            \end{align*}
            for all $n \leq 2^k - j$;
		\item \emph{pattern $B_1$} if $M[1, j] = 1$ implies
            \begin{align*}
                M[1 + h2^{s_j + 1}, j + h2^{s_j + 1}] = M[1 + (h + 1)2^{s_j + 1} - j, (h + 1)2^{s_j + 1}] = 1
            \end{align*}
            for all $0\leq h < 2^{k - s_j - 1}$.
		\item \emph{pattern $C_0$} if $M[2^m, j] = 0$ implies
            \begin{align*}
                M[2^m + h2^{s_j + 1}, j + h2^{s_j + 1}] = M[1 + (h + 1)2^{s_j + 1} - j, 1 + (h + 1)2^{s_j + 1} - 2^m] = 0
            \end{align*}
            for all $0 \leq m \leq s_j, 0\leq h < 2^{k - s_j - 1}$;
		\item \emph{pattern $C_1$} if $M[2^m, j] = 1$ implies
            \begin{align*}
                M[2^m + h\cdot 2^{s_j + 1} - n, j + h\cdot 2^{s_j + 1} - n] = 1
            \end{align*}
            for all $0 \leq m \leq s_j, 0\leq h < 2^{k - s_j - 1}, n < \min\{2^m, j - 2^s_j\}$.
	\end{itemize}

\begin{figure}[!ht]
  \centering
  \includegraphics[width=0.25\textwidth]{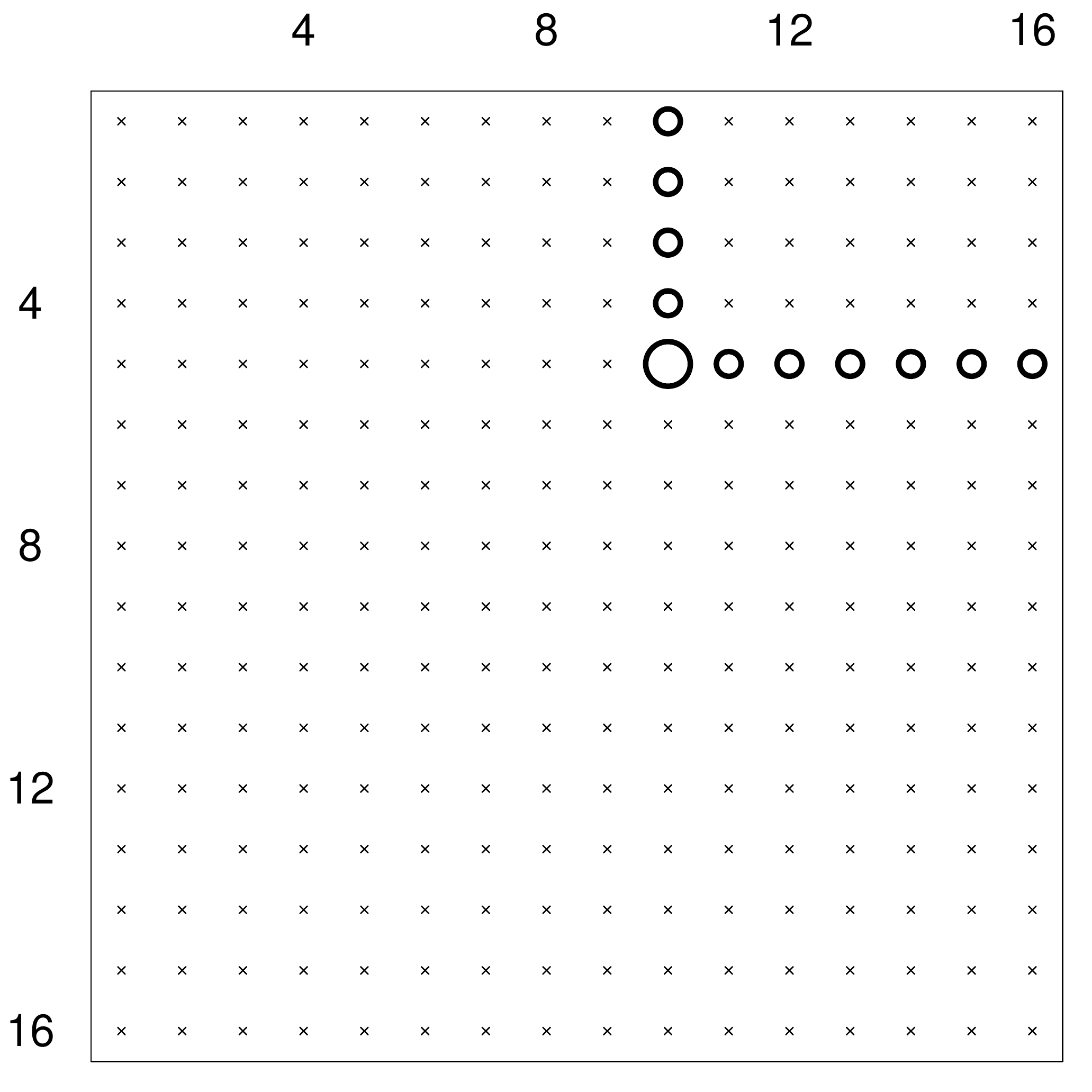}
  \includegraphics[width=0.25\textwidth]{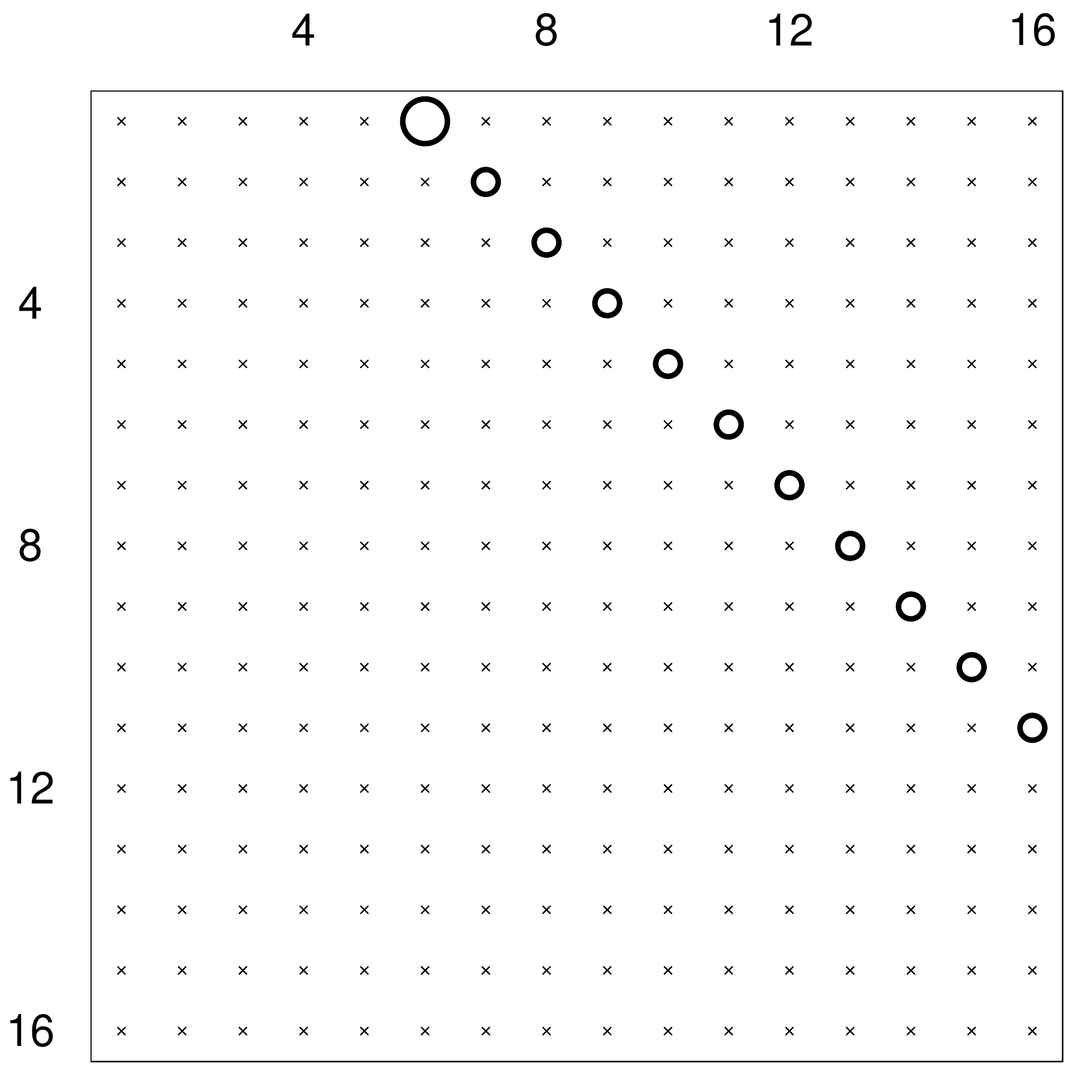}
  \includegraphics[width=0.25\textwidth]{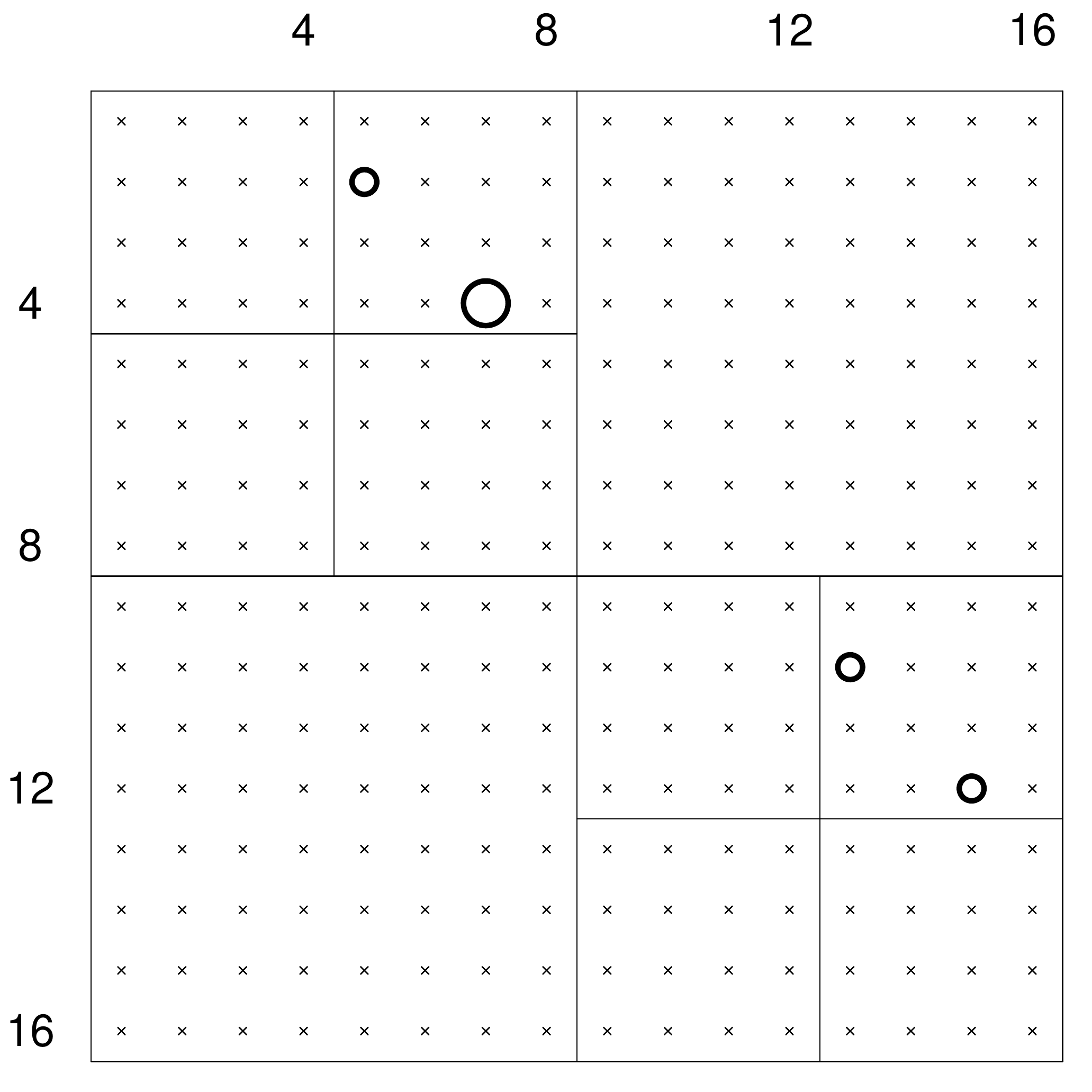}\\
  \includegraphics[width=0.25\textwidth]{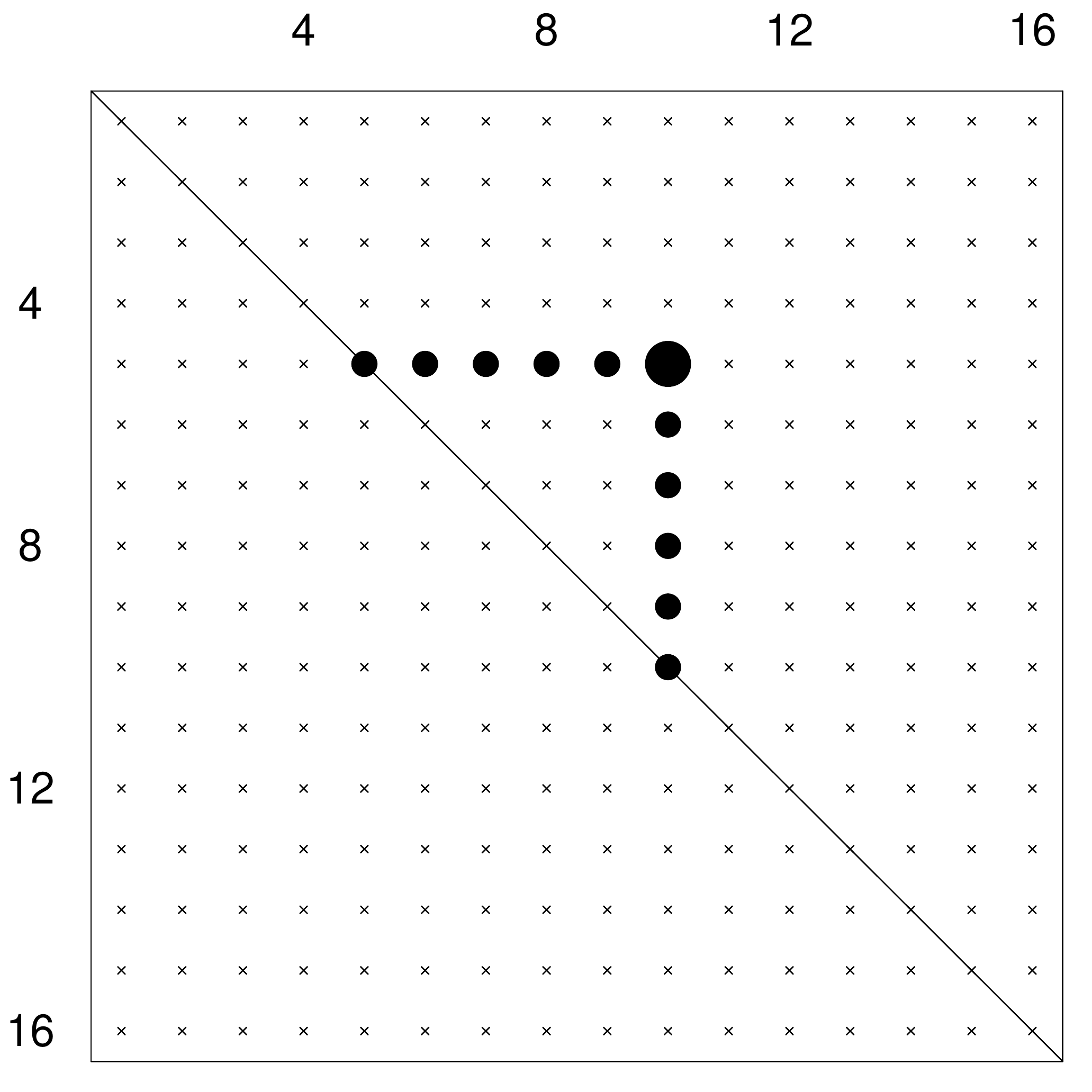}
  \includegraphics[width=0.25\textwidth]{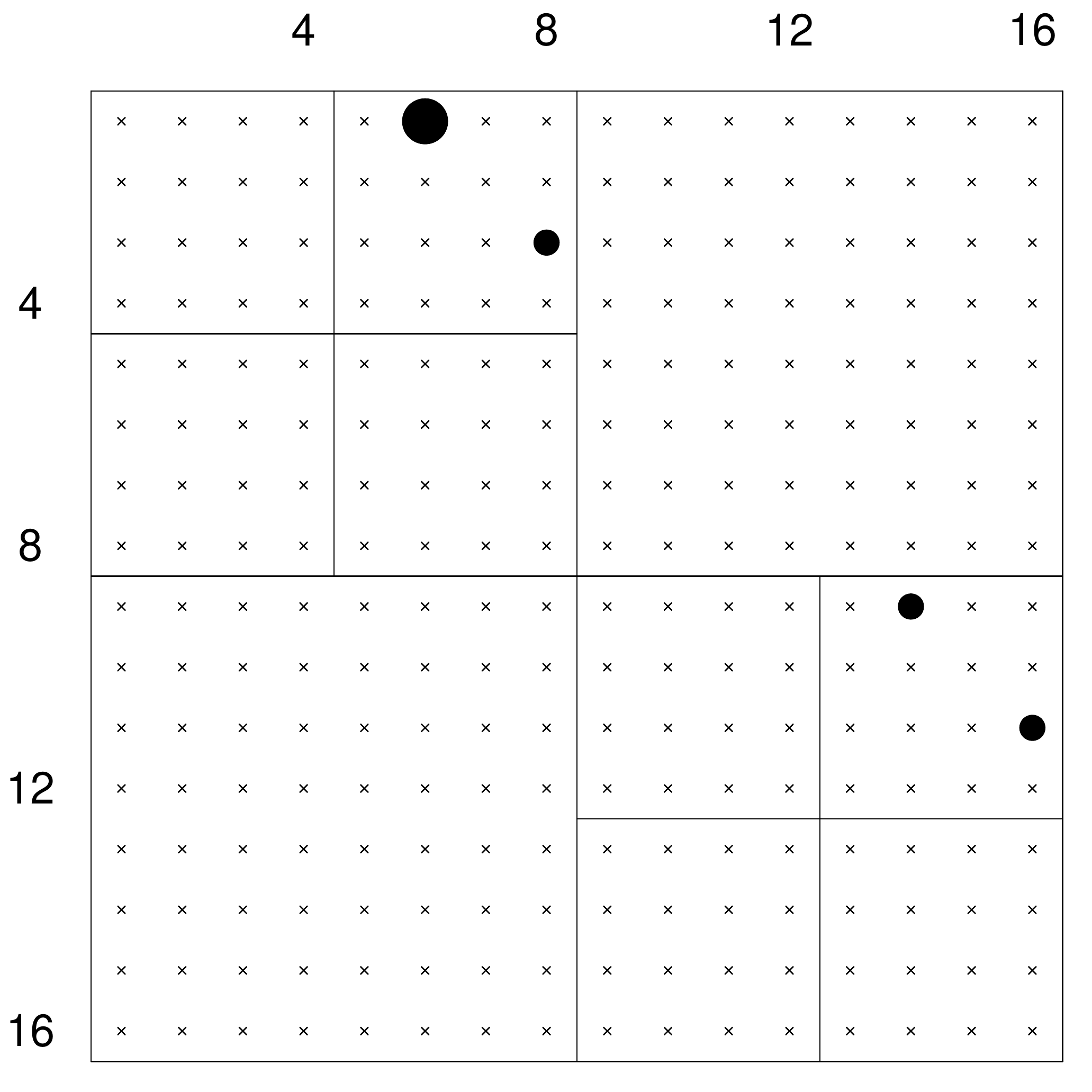}
  \includegraphics[width=0.25\textwidth]{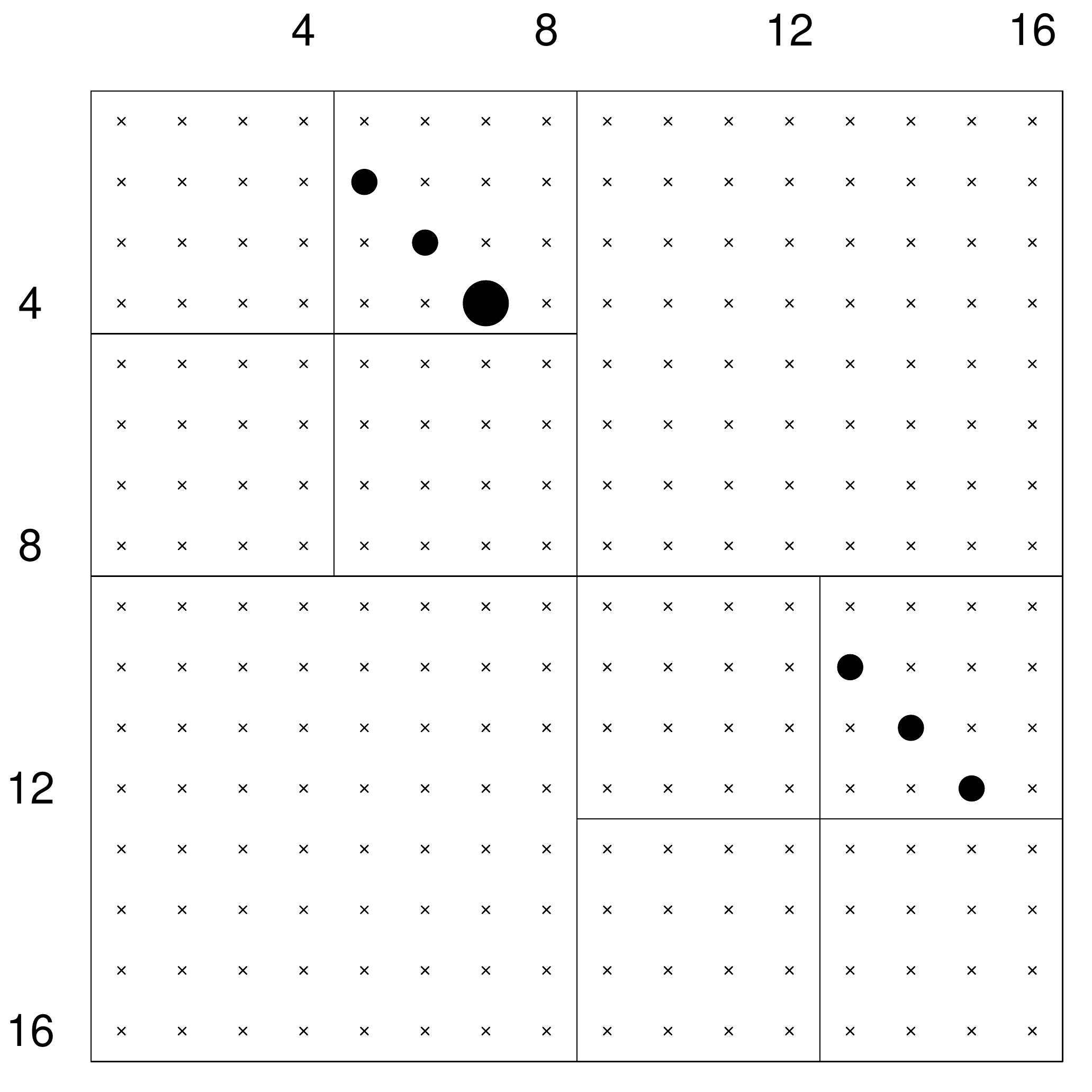}
	\captionsetup{singlelinecheck=off}
	\caption[Patterns in distance matrices]{Bigger white dots, respectively black ones, represent the initial value for patterns, so do smaller dots as consequent values. From the left to the right, we display:
	\begin{itemize}
		\item patterns $A_0$, $B_0$, $C_0$ in the first row,
		\item patterns $A_1$, $B_1$, $C_1$ in the second row.
	\end{itemize}
	}
	\label{fig:patterny}
\end{figure}

The next lemma shows that a distance matrix contains all the patterns. Their importance is accentuated because of radical simplifying an estimate of the ratio of $\ve$--close pairs of intervals to all interval pairs.

\begin{lema}
\label{lema:postupnost_gulky}
The distance matrix $D_k(\ve)$ for each $k > 0$ contains the patterns $A_0$, $A_1$, $B_0$, $B_1$, $C_0$, $C_1$.
\end{lema}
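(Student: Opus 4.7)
Every entry of $D_k(\varepsilon)$ is determined, via formula~(\ref{eq:gap_seq_dis}), by the distance $|\gamma(u)-\gamma(v)|$ and the partial sum $A(\min\{\gamma(u),\gamma(v)\},|\gamma(u)-\gamma(v)|)$ of the gap sequence. Each of the six patterns asserts that, from a prescribed entry value, the values of certain related entries are forced; because the linear term $|\gamma(u)-\gamma(v)|\alpha^k$ depends only on the difference of the two indices and is constant along every diagonal of the matrix, each pattern reduces to a comparison of two gap-sums with the same length but different starting indices. The plan is therefore to rewrite every pattern as such a comparison and then invoke one of Lemma~\ref{lema:postupnost_najmensie}, Lemma~\ref{lema:postupnost_najvacsie}, or the periodicity identity $a_s=a_{h\cdot 2^m\pm s}$ established before.

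Patterns $A_0$ and $A_1$ are pure monotonicity. Pushing one index of $(i,j)$ further from the main diagonal enlarges both the linear term and the gap-sum (only positive $a_l$'s are added to the range of summation), so $M[i,j]=0$ propagates outward; the ranges $n\leq 2^k-i$ and $m\leq 2^k-j$ are exactly those that keep the moved index on the same side of the diagonal. Symmetrically, the constraints $n\leq i-j$ and $m\leq j-i$ in $A_1$ guarantee that an inward shift does not cross the diagonal, so the distance decreases and $M[i,j]=1$ propagates inward.

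Patterns $B_0$ and $B_1$ concern shifts that start in row~$1$. For a fixed column $j$ the relevant sum is $A(1,j-1)$; after moving along the diagonal to $(1+n,j+n)$ it becomes $A(1+n,j-1)$. Lemma~\ref{lema:postupnost_najmensie} says exactly that $A(1,j-1)\leq A(1+n,j-1)$, which gives $B_0$. For $B_1$ the length $j-1$ satisfies $j-1<2^{s_j+1}$, so the periodicity $a_{l+h\cdot 2^{s_j+1}}=a_l$ (valid for $l<2^{s_j+1}$) yields $A(1+h\cdot 2^{s_j+1},j-1)=A(1,j-1)$, while its ``minus'' version $a_{h\cdot 2^m-q}=a_q$ handles the mirrored position $\bigl(1+(h+1)2^{s_j+1}-j,\,(h+1)2^{s_j+1}\bigr)$. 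In both cases the gap-sum coincides with the original, so the distance, and therefore the entry, is preserved.

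Patterns $C_0$ and $C_1$ generalise these shifts from row~$1$ to row~$2^m$ with $m\leq s_j$, still using $2^m<j\leq 2^{s_j+1}$. The same periodicity gives $A(2^m+h\cdot 2^{s_j+1},j-2^m)=A(2^m,j-2^m)$ together with its mirror, proving $C_0$. For $C_1$ I would apply Lemma~\ref{lema:postupnost_najvacsie} with parameters matched to $n_L=j$, $s=s_j+1$, $t=h$, $h_L=n$; its conclusion $A(2^m,j-2^m)\geq A(2^m+h\cdot 2^{s_j+1}-n,j-2^m)$ is precisely what is needed to show that the distance at the shifted position is no larger than at $(2^m,j)$, so the entry remains $1$. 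The hypotheses of that lemma, together with the requirement that the shifted start be a valid index in the appropriate block, translate exactly into the pattern's conditions $m\leq s_j$, $0\leq h<2^{k-s_j-1}$, and $n<\min\{2^m,j-2^{s_j}\}$. The main obstacle is bookkeeping: six patterns, each with several index constraints expressed through $s_j$ and reflections $\pm$, so the bulk of the work is building a clean dictionary between the pattern parameters and the parameters of the two gap-sum lemmas and the periodicity identity; once this dictionary is in place every individual inequality is immediate.
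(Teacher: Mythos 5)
Your proposal is correct and follows essentially the same route as the paper: reduce each pattern via~(\ref{eq:gap_seq_dis}) to a comparison of gap-sums of equal length, then handle $A_0$, $A_1$ by monotonicity of the distance away from the diagonal, $B_0$ by Lemma~\ref{lema:postupnost_najmensie}, $B_1$ and $C_0$ by the periodicity identity $a_s = a_{h\cdot 2^m \pm s}$, and $C_1$ by Lemma~\ref{lema:postupnost_najvacsie}. The remaining ``dictionary'' you defer is exactly the $\gamma(u \pp n) - \gamma(v \pp n) = \gamma(u) - \gamma(v)$ bookkeeping the paper carries out at the end of its proof.
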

\begin{proof}
The patterns $A_0$ and $A_1$ follow from the definition of distance matrix and the geometry of $I(u)$ with $u\in\Sigma^k$ on $[0, 1]$. Choose $u\in\Sigma^k$ and let $s = s_{\gamma(u)}$, $1\leq m < k$ be as in the definition of patterns. For the patterns $C_0$, $C_1$, assume that $\gamma(u)\geq 2^m$. Then:
\begin{itemize}
    \item[($B_0$)] $A(n + 1, \gamma(u) - 1) \geq A(1, \gamma(u) - 1)$. It is always true by Lemma~\ref{lema:postupnost_najmensie}.
    \item[($B_1$)] $A(n 2^{s + 1} + 1, \gamma(u) - 1) = A((n + 1) 2^{s + 1} + 1 - \gamma(u), \gamma(u) - 1) = A(1, \gamma(u) - 1)$. That follows from~(\ref{lema:a_s}) since $\gamma(u) - 1 < 2^{s + 1}$.
    \item[($C_0$)] $A(2^m + h 2^{s + 1}, \gamma(u) - 2^m) = A((h + 1) 2^{s + 1} - \gamma(u) + 1, \gamma(u) - 2^m) = A(2^m, \gamma(u) - 2^m)$. This case is analogous to the previous one.
    \item[($C_1$)] $A(2^m + h 2^{s + 1} - n, \gamma(u) - 2^m) \leq A(2^m, \gamma(u) - 2^m)$. Since $2^m\leq 2^s < \gamma(u) \leq 2^{s + 1}$ and $n < 2^m$, the inequality follows from Lemma~\ref{lema:postupnost_najvacsie} where $n' = \gamma(u)$, $s' = s + 1$ and $h' = n$.
    \end{itemize}

\noindent
From~(\ref{eq:gamma_scitanie}), we have $\gamma(u\pp n) - \gamma(v\pp n) = \gamma(u) - \gamma(v)$. Therefore
\begin{align*}
\gamma(0^{k - s - 1}1^{s + 1} \pp& n\cdot 2^{s + 1}) - \gamma(0^{k - s - 2}10^{s + 1}\pminus \gamma(u)\pp n\cdot 2^{s + 1}) = \\
&= 2^{s + 1} + n\cdot 2^{s + 1} - (2^{s + 1} + 1 - \gamma(u) + n\cdot 2^{s + 1}) = \gamma(u) - 1 = \\
&= \gamma(u) - \gamma(0^k),
\end{align*}
and
\begin{align*}
\gamma(0^{k - s - 1}&1^{s + 1}\pminus 2^m \pp 1 \pp h\cdot 2^{s + 1}) - \gamma(0^{k - s - 1}1^{s + 1}\pminus \gamma(u)\pp 1\pp h\cdot 2^{s + 1}) = \\
&= 2^{s + 1} - 2^m + 1 + h\cdot 2^{s + 1} - (2^{s + 1} - \gamma(u) + 1 + h\cdot 2^{s + 1}) = \gamma(u) - 2^m = \\
&= \gamma(u) - \gamma(0^{k - m}1^m).
\end{align*}

Using definitions of patterns together with~(\ref{eq:gap_seq_dis}) we proved the lemma.
\end{proof}

\section{Correlation integrals and asymptotic determinisms of maps \texorpdfstring{$f_\alpha$}{fa} and \texorpdfstring{$f_{\alpha, k}$}{fak}}
\label{del:cor_int}
\label{kap_cor_int}

Let $1\leq \ell\leq\infty$ and $k > 0$ be given. Then $\sup\{|f_{k'}(x) - f(x)|\ |\ x\in [0, 1]\} \leq \alpha^k$ for all $k'\geq k$. By Definition~\ref{df:del} and the definition of intervals~(\ref{eq:interval}), $f^i(I(u)) = f_k^i(I(u)) = f_{k'}^i(I(u))$ for every $u\in\Sigma^k$, $i\geq 0$, and $k'\geq k$. Fix $x\in X_k$, then $\dis(f^i(x), f_{k'}^i(x))\leq\alpha^k$ for all $k'\geq k$ and $i \geq 0$. If $\dis(f^i(x), f^j(x))\leq \ve$ where $i, j\geq 0$, then we have
\begin{align*}
\dis(f_{k'}^i(x), f_{k'}^j(x)) \leq \dis(f_{k'}^i(x), f^i(x)) + \dis(f^i(x), f^j(x)) + \dis(f^j(x), f_{k'}^j(x))\leq \ve + 2\cdot\alpha^k
\end{align*}
for every $k'\geq k$. Similarly, $\dis(f_{k'}^i(x), f_{k'}^j(x))\leq \ve - 2\cdot\alpha^k$ implies $\dis(f^i(x), f^j(x))\leq \ve$. From the definition of $\dis_\ell$,
\begin{align*}
& \dis_{\ell}^{f_{k'}}(f_{k'}^i(x), f_{k'}^j(x)) \leq \ve - 2\cdot\alpha^k \Rightarrow \dis_{\ell}^f(f^i(x), f^j(x)) \leq \ve\\
& \dis_{\ell}^f(f^i(x), f^j(x)) \leq \ve \Rightarrow \dis_{\ell}^{f_{k'}}(f_{k'}^i(x), f_{k'}^j(x)) > \ve + 2\cdot\alpha^k.
\end{align*}

Then, by~(\ref{eq:cor_sum}), for all $n > 0$, $k'\geq k$, and $x\in X_k$,
\begin{equation}
\label{eq:dvaja_policajti}
\tC{\ell}{f_{k'}}(x, n, \ve - 2\cdot\alpha^k) \leq \tC{\ell}{f}(x, n, \ve) \leq \tC{\ell}{f_{k'}}(x, n, \ve + 2\cdot\alpha^k).
\end{equation}

\begin{lema}
\label{lema:periodic}
Let $(M, \dis)$ be a metric space and $g$ a map on $M$. Suppose that $x$ is a periodic point with the period $p$. Then $\lim_{n\to\infty}\tC{\ell}{g}(x, n, \ve)$ exists and is equal to $\tC{\ell}{g}(x, p, \ve) = \mu_x\times\mu_x\{(x, y)\in M^2\ |\ \dis_{\ell}(x, y)\leq\ve\}$ where $\mu_x$ is a uniform measure with the support on the orbit of $x$.
\end{lema}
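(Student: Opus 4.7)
The plan is to exploit the periodicity of $x$ to show that the numerator in the definition of $\tC{\ell}{g}(x, n, \ve)$ grows like $n^2$ up to lower order terms, which gives the limit, and then to identify the limit value with both $\tC{\ell}{g}(x, p, \ve)$ and the measure-theoretic quantity.

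First I would record the key periodicity: since $g^{i+p}(x) = g^i(x)$ for every $i \geq 0$, the sequence $(g^i(x))_{i \geq 0}$ is $p$-periodic, so for every $k \geq 0$ and every $i, j$ we have $\dis(g^{i+p+k}(x), g^{j+p+k}(x)) = \dis(g^{i+k}(x), g^{j+k}(x))$. Taking the supremum over $0 \leq k < \ell$ (which is a supremum over a finite set of values even when $\ell = \infty$, because the orbit is finite), this gives
\begin{align*}
\dis_{\ell}(g^{i+p}(x), g^{j+p}(x)) = \dis_{\ell}(g^{i}(x), g^{j}(x)).
\end{align*}
Hence the indicator $\eta(i,j) = \mathbf{1}[\dis_{\ell}(g^i(x), g^j(x)) \leq \ve]$ is $p$-periodic in each coordinate. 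Denoting $C = \card\{(i,j) : 0 \leq i,j < p,\ \eta(i,j) = 1\}$, any $p \times p$ block of $(i,j)$'s contributes exactly $C$ to the count.

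For $n = mp$ with $m \geq 1$, partitioning $\{0, \dots, n-1\}^2$ into $m^2$ blocks of size $p \times p$ gives a total count of $m^2 C$, so $\tC{\ell}{g}(x, mp, \ve) = m^2C/(mp)^2 = C/p^2 = \tC{\ell}{g}(x, p, \ve)$. For general $n$, write $n = mp + r$ with $0 \leq r < p$. Sandwiching $\{0, \dots, n-1\}^2$ between $\{0, \dots, mp - 1\}^2$ and $\{0, \dots, (m+1)p - 1\}^2$ yields
\begin{align*}
\frac{m^2 C}{n^2} \;\leq\; \tC{\ell}{g}(x, n, \ve) \;\leq\; \frac{(m+1)^2 C}{n^2}.
\end{align*}
As $n \to \infty$ we have $m/n \to 1/p$ and $(m+1)/n \to 1/p$, so both bounds tend to $C/p^2$. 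This proves existence of the limit and its equality with $\tC{\ell}{g}(x, p, \ve)$.

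Finally, since $\mu_x = \frac{1}{p}\sum_{i=0}^{p-1}\delta_{g^i(x)}$ is the uniform measure on the orbit, the product measure gives
\begin{align*}
\mu_x \times \mu_x\{(y,z) \in M^2 : \dis_{\ell}(y,z) \leq \ve\} = \frac{1}{p^2}\sum_{i=0}^{p-1}\sum_{j=0}^{p-1} \eta(i,j) = \frac{C}{p^2},
\end{align*}
matching the two previous expressions. The only subtlety is the case $\ell = \infty$, where I need to verify that $\dis_{\infty}$ is finite and respects the $p$-periodicity; both follow immediately from the fact that the orbit of $x$ consists of finitely many points, so the supremum defining $\dis_{\infty}$ is achieved on the finite set $\{0, 1, \ldots, p-1\}$. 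No other obstacle arises.
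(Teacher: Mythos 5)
Your proposal is correct and follows essentially the same route as the paper: both exploit the $p$-periodicity of $\dis_{\ell}(g^i(x),g^j(x))$ in each coordinate, reduce the count over $\{0,\dots,n-1\}^2$ to $m^2$ full $p\times p$ blocks plus a boundary contribution that is $o(n^2)$ (you handle the remainder by sandwiching between $m^2C$ and $(m+1)^2C$, the paper by bounding the L-shaped leftover by $2nq_n-q_n^2$, which is the same estimate), and then identify $C/p^2$ with the product-measure expression for the uniform orbit measure. Your explicit remark that for $\ell=\infty$ the supremum is attained on $\{0,\dots,p-1\}$ is a point the paper leaves implicit.
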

\begin{proof}
By assumptions, $\dis_{\ell}(g^i(x), g^j(x)) = \dis_{\ell}(g^{i + m_1\cdot p}(x), g^{j + m_2\cdot p}(x))$ for every $i, j \geq 0$ and $m_1, m_2 \geq 0$. Every $n\in\NNN$ can be uniquely written as $n = m_n\cdot p + q_n$ where $m_n \geq 0$ and $0\leq q_n < p$. It follows that
\begin{align*}
\card\{(i, j)\                  &|\   0\leq i, j < n, \dis_{\ell}(g^i(x), g^j(x))\leq \ve\} = \\
                                & = m_n^2\card\{(i, j)\ |\ 0\leq i, j < p, \dis_{\ell}(g^i(x), g^j(x))\leq \ve\} + \\
                                & + \card\{(i, j)\ |\ 0\leq i, j < n, \max\{i, j\} > m_n\cdot p, \dis_{\ell}(g^i(x), g^j(x))\leq \ve\}.
\end{align*}
Thus
\begin{align*}
  (m_np)^2\cdot \tC{\ell}{g}(x, p, \ve) \leq (m_n\cdot p + q_n)^2 \cdot \tC{\ell}{g}(x, n, \ve) \leq (m_np)^2\cdot \tC{\ell}{g}(x, p, \ve) + 2nq_n - q_n^2.
\end{align*}
The first statement follows after multiplying the inequality by $1/n^2$.

Since $\mu_x$ is uniform, and non-zero only on the orbit of $x$, we have
\begin{align*}
\mu_x\times\mu_x\{(x, y)\in M^2\ |\ \dis_{\ell}(x, y) \leq\ve\} = \frac{1}{p^2}\card\{(x, y)\in\Orb_g^2(x)\ |\ \dis_{\ell}(x, y)\leq\ve\}
\end{align*}
which is the same as $\tC{\ell}{g}(x, p, \ve)$.
\end{proof}

Each $f_{k}$ is periodic on $X_k$, hence $\lim_{n\to\infty} \tC{\ell}{f_{k}}(x, n, \ve)$ exists for every $x\in X_k$ and is equal to $\tC{\ell}{f_{k}}(x, 2^k, \ve)$. Left points of intervals $I(u)$, here being $u\in\Sigma^k$, form the orbit $\Orb_{f_k}(0)$. Using the distance matrix $D_{k, \ell}(\ve)$ by the previous lemma we can compute the $\ell$--correlation sum for $x = 0$.

\begin{lema}
\label{lema:per_det_one}
Let $g$ be a continuous map on a metric space $(M, \dis)$. Let $x\in M$ be periodic under $g$. Then there is $\ve_0 > 0$ such that for every $\ve < \ve_0$, $n > 0$ and every $1\leq\ell\leq\infty$ the determinism $\tDET{\ell}{g}(x, n, \ve)$ equals $1$.
\end{lema}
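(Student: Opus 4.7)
The plan is to exploit that a periodic orbit is a finite set, so small-enough balls around distinct orbit points are disjoint and distances between iterates take only finitely many values. Let $p_0$ denote the minimal period of $x$ under $g$, so that $\Orb_g(x) = \{x, g(x), \ldots, g^{p_0-1}(x)\}$ consists of $p_0$ distinct points. Set
\begin{align*}
\ve_0 = \min\{\dis(g^i(x), g^j(x)) \mid 0 \leq i < j < p_0\},
\end{align*}
(with $\ve_0 = +\infty$ if $p_0 = 1$). Since the minimum is taken over finitely many strictly positive numbers, $\ve_0 > 0$.

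First I would establish the key dichotomy: for $\ve < \ve_0$ and any $i, j \geq 0$,
\begin{align*}
\dis(g^i(x), g^j(x)) \leq \ve \iff g^i(x) = g^j(x) \iff i \equiv j \pmod{p_0}.
\end{align*}
The implication $(\Rightarrow)$ uses that if $g^i(x) \neq g^j(x)$, then by periodicity their distance coincides with $\dis(g^{i \bmod p_0}(x), g^{j \bmod p_0}(x)) \geq \ve_0 > \ve$; the reverse is trivial.

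Next I would upgrade this to the metric $\dis_\ell$. If $g^i(x) = g^j(x)$, periodicity yields $g^{i+k}(x) = g^{j+k}(x)$ for all $k \geq 0$, so $\dis_\ell(g^i(x), g^j(x)) = 0 \leq \ve$; conversely, if $g^i(x) \neq g^j(x)$, then already $\dis(g^i(x), g^j(x)) > \ve$, so the supremum defining $\dis_\ell$ also exceeds $\ve$. Hence, for $\ve < \ve_0$ the indicator in~\eqref{eq:cor_sum} does not depend on $\ell$, which gives
\begin{align*}
\tC{1}{g}(x, n, \ve) = \tC{\ell}{g}(x, n, \ve) = \tC{\ell + 1}{g}(x, n, \ve) = \tC{\infty}{g}(x, n, \ve)
\end{align*}
for every $n > 0$ and every $1 \leq \ell \leq \infty$.

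Finally I would substitute this common value into~\eqref{eq:det_nekonecno}. For $1 \leq \ell < \infty$ the numerator collapses to $\ell\cdot \tC{1}{g}(x, n, \ve) - (\ell-1)\cdot \tC{1}{g}(x, n, \ve) = \tC{1}{g}(x, n, \ve)$, and for $\ell = \infty$ Lemma~\ref{lema:korelacny_rekurent} identifies numerator and denominator directly. The diagonal pairs ensure $\tC{1}{g}(x, n, \ve) \geq 1/n > 0$, so the quotient is well-defined and equals $1$. The only delicate point is confirming that $\ve_0$ depends solely on the orbit (not on $n$ or $\ell$), which is built into the definition since the orbit is finite and periodicity transports equalities along all forward iterates.
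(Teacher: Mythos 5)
Your proof is correct and follows essentially the same route as the paper: both define $\ve_0$ as the minimum distance between distinct points of the finite orbit, observe that for $\ve<\ve_0$ the $\ve$-closeness of $\dis_\ell(g^i(x),g^j(x))$ is independent of $\ell$ and equivalent to $i\equiv j\pmod{p_0}$, and conclude from the definition of determinism. Your write-up is merely more explicit (and correctly restricts the minimum to $i\neq j$, which the paper's formula omits).
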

\begin{proof}
Suppose that $p$ is the period of $x$. Set $\ve_0 = \min\{\dis(f^i(x), f^j(x))\ |\ 0\leq i, j < p\}$. Thus for every $0 < \ve < \ve_0$, if $\dis_{\ell}(f^i(x), f^j(x))\leq\ve$, we have $j = q\cdot p + i$ for some $q\geq 0$.

The statement follows from the definition of determinism.
\end{proof}

Let $(M, \dis)$ be a metric space and $g: M\to M$ be a continuous map. Recall that we say that a measure $\nu$ is \emph{$g$--ergodic} if every subset $A\subset M$ with the property $g^{-1}(A) = A$ satisfies $\nu(A) = 0$ or $\nu(M\setminus A) = 0$. In addition, let $\nu$ be a probability measure, that is $\nu(M) = 1$.

The \emph{$\ell$--correlation integral} is defined by
\begin{equation}
\label{def:kor-int}
\tc{\ell}{g}(\nu, \ve) = \nu\times\nu\{(x, y)\in M\times M\ |\ \dis_{\ell}(x, y) \leq \ve\}
\end{equation}
and the \emph{$\ell$--asymptotic determinism} is defined by
\begin{equation}
\label{def:as-det}
\tdet{\ell}{g}(\nu, \ve) = \frac{\ell\cdot \tc{\ell}{g}(\nu, \ve) - (\ell - 1)\cdot \tc{\ell + 1}{g}(\nu, \ve)}{\tc{1}{g}(\nu, \ve)}\qquad\text{and especially}\qquad
\tdet{\infty}{g}(\nu, \ve) = \frac{\tc{\infty}{g}(\nu, \ve)}{\tc{1}{g}(\nu, \ve)}.
\end{equation}
The $\ell$--correlation integral has originally been defined in~\cite{GP-cor-sum} as the limit of $(\tC{\ell}{g}(x, i, \ve))_i$ where $x\in M$. Definitions above follow from the results in~\cite{pesin} and~\cite{pesin-tempelman} and from Lemma~\ref{lema:korelacny_rekurent}. Later, ergodic measures will be fixed, therefore we can leave them from notation and write $\tc{\ell}{g}(\ve)$ instead of $\tc{\ell}{g}(\nu, \ve)$. Similarly for determinisms.

Define
\begin{align*}
\mu_{\alpha, k}(A) = 1/2^k\cdot\card\{\Orb_{f_{\alpha, k}}(0)\cap A\}
\end{align*}
for a subset $A\subset [0, 1]$. The set function $\mu_{\alpha, k}$ is a measure. Similarly, set $\mu_{\alpha}$ to be a unique ergodic measure on $[0, 1]$ with the support on the set $X_{\alpha}$, i.e.~$\mu_{\alpha}(I_{\alpha}(u^{(k)})) = 1/2^k$ for all $k > 0$ and $u^{(k)}\in\Sigma^k$, and the measure of a measurable $B\subset [0, 1]\setminus X_{\alpha}$ is zero.

\begin{lema}
\label{lema:hranice_rr}
Let $k \geq 1$, $h\geq 0$ and $\ve\in(\alpha^{h + 1}, \alpha^h]$ be given. Suppose that $u_m\in\Sigma^{k + h}$ where $1\leq m\leq h + 1$ satisfy $2^{k - 1} < \gamma(u_1) \leq 2^k$ and $2^{k + m - 2} \leq \gamma(u_m) < 2^{k + m - 1}$ for $m > 1$. Set $j_1 = 2^k - \gamma(u_1)$ and $j_m = \gamma(u_m) - 2^{k + m - 2}$ for $2\leq m \leq h + 1$.
Assume that the pair of $I(0^{k + h})$ and $I(u_1)$ is $\ve$--close, and $I(0^{k + h})$ and $I(u_1 \pp 1)$ are $\ve$--distant. Similarly, require that $I(0^{h - m + 2}1^{k + m - 2})$ and $I(u_{m})$ are $\ve$--close, and $I(0^{h - m + 2}1^{k + m - 2})$ and $I(u_{m} \pp 1)$ are $\ve$--distant. Then
\begin{align*}
\left(\frac{1}{2^{k + h}}\right)^2 \left[2^{h + 1}(2^{2k - 1} - j_1^2) + \sum_{m = 2}^{h + 1}2^{h - m + 1}j_{m}^2\right]&\leq \tc{1}{f_{k}}(\ve) \leq\\
                                                                            \leq \left(\frac{1}{2^{k + h}}\right)^2 &\left[2^h(2^{2k} - j_1^2) + \sum_{m = 2}^{h + 1}2^{h - m + 2}j_{m}^2\right].
\end{align*}
\end{lema}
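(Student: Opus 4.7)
Since $\dis_1 = \dis$ does not depend on the map, $\tc{1}{f_k}(\ve)$ is the $\mu_{\alpha, k+h} \times \mu_{\alpha, k+h}$--measure of the set of $\ve$--close pairs, which by Lemma~\ref{lema:periodic} applied to the $f_{k+h}$--orbit of $0$ reduces to $(1/2^{k+h})^2$ times the number of $1$--entries in the distance matrix $D_{k+h, \alpha}(\ve)$. So the task is to estimate this count. By formula~(\ref{eq:gap_seq_dis}) the distance $\dis(u, v)$ is strictly monotone in $\gamma(v)$ for fixed $\gamma(u)$; hence the $1$--entries of each row of the matrix form a contiguous column interval containing the diagonal. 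The hypothesis pins down row $1$ (reference $\cores(0^{k+h}) = 0$): its $1$--entries are precisely columns $[1, \gamma(u_1)] = [1, 2^k - j_1]$. Similarly, row $2^{k+m-2}$ (reference $\cores(0^{h-m+2} 1^{k+m-2})$) has $1$--entries ending at column $\gamma(u_m) = 2^{k+m-2} + j_m$ on the right of the diagonal.

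I would then use the patterns of Lemma~\ref{lema:postupnost_gulky} to propagate the known row structure to the whole matrix. Pattern $B_1$ takes each $1$ at $M[1, j]$ and reproduces it at $2^{k+h-s_j-1}$ positions along diagonals shifted by multiples of $2^{s_j + 1}$, plus a reflected copy; pattern $C_1$ does the same from the rows $2^{k+m-2}$. Summing these guaranteed contributions will give the lower bound. Dually, patterns $B_0$ and $C_0$ propagate the $0$--entries of the same rows --- columns $(\gamma(u_1), 2^k]$ of row $1$, and columns $(\gamma(u_m), 2^{k+m-1})$ of row $2^{k+m-2}$ --- to force $0$--entries elsewhere, yielding the upper bound.

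The specific expressions arise from a stratum--by--stratum count. Stratifying columns by $s_j = s$, each full stratum $s < k - 1$ contributes $2^s$ columns, each reproduced in $2^{k+h-s-1}$ diagonal copies by pattern $B_1$; summing these contributions produces the leading term $2^{2k+h}$. The truncation at $u_1$ within the partial stratum $s = k-1$ removes a count proportional to $j_1^2$, giving the $-2^{h+1} j_1^2$ correction; the "excess" of $1$--columns $j_m$ in row $2^{k+m-2}$ contributes $+2^{h-m+1} j_m^2$ for each $m \geq 2$. The principal obstacle will be the combinatorial bookkeeping: one must verify that the $1$--entries produced by $B_1$, $C_1$ from distinct reference rows are disjoint (no double counting) and that the $0$--forbidden regions from $B_0$, $C_0$ tile the complement up to the factor--of--two slack between the upper and lower bounds. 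The key observation for disjointness is that the shift $2^{s_j + 1}$ in pattern $B_1$ matches the natural block size at stratum $s_j$, so each matrix entry is reached from a unique minimal reference.
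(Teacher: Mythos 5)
Your overall strategy is exactly the paper's: reduce $\tc{1}{}(\ve)$ to counting the $1$--entries of the distance matrix $D_{k+h,\alpha}(\ve)$ (via Lemma~\ref{lema:periodic} and the orbit of $0$), seed the matrix with the rows fixed by the hypothesis, propagate with the patterns of Lemma~\ref{lema:postupnost_gulky} ($A_1,B_1,C_1$ forcing ones for the lower bound, $A_0,B_0,C_0$ forcing zeros for the upper bound), and count; the paper's own proof is in fact nothing more than this sketch plus a reference to Figure~\ref{fig:vysvetlenie_veta}, so in that sense you are not behind it.

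One concrete piece of your accounting does not add up, though, and since it is the part that is supposed to produce the main term you should repair it before trusting the sketch. You attribute the leading term $2^{2k+h}$ to the $B_1$ diagonal copies alone: a stratum $s$ has $2^s$ columns, each copied to $2\cdot 2^{(k+h)-s-1}$ positions, so each stratum contributes only $2^{k+h}$ translated entries and summing over the $O(k)$ strata gives $O\bigl((k+h)\,2^{k+h}\bigr)$ entries --- far short of $2^{2k+h}$. The bulk of the count is not the translates themselves but what pattern $A_1$ forces from them: a translated or reflected seed sitting at horizontal distance $d$ from the diagonal forces the entire run of $d$ consecutive ones between it and the diagonal in its row, and it is these runs (of length up to roughly $\gamma(u_1)\approx 2^k$, summed over all $2^{k+h}$ rows and doubled by symmetry) that yield the $2^{h+1}(2^{2k-1}-j_1^2)$ term, with the $C_1$ seeds in rows $2^{k+m-2}$ similarly generating the $2^{h-m+1}j_m^2$ runs. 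This also changes the disjointness issue you flag: within a single row the runs coming from different strata are nested rather than disjoint, so the correct bookkeeping keeps only the longest run per row instead of summing contributions over strata. With that correction your argument lines up with the paper's; without it the stated formulas cannot be recovered from the count you describe.
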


\begin{figure}[!ht]
    \centering
	\includegraphics[width = 0.3\textwidth]{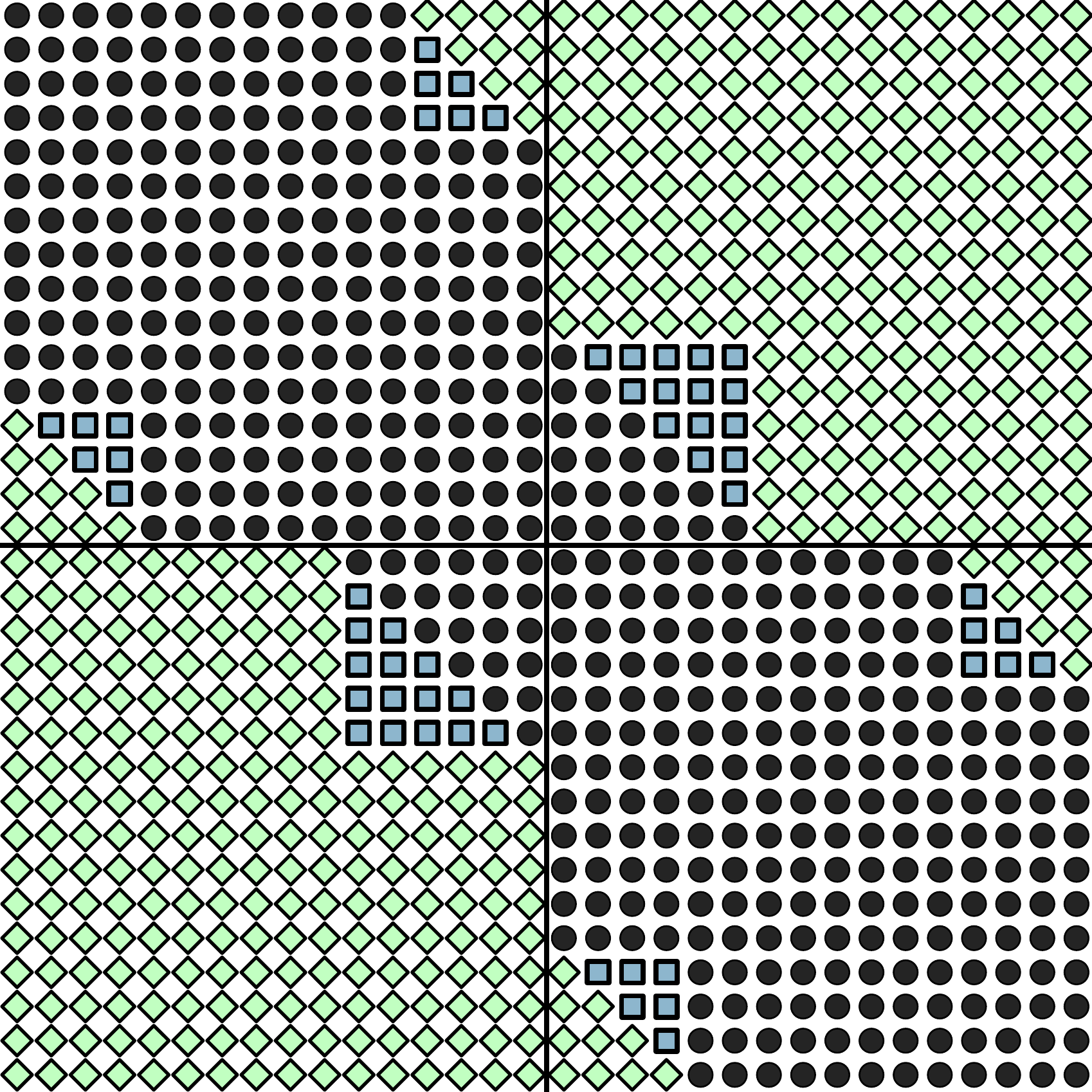}
	\caption[Example of a distance matrix]{Lemma~\ref{lema:hranice_rr} for $k = 4$ and $h = 1$. Here $u_1 = 01011$ and $u_2 = 10101$. Black dots are pairs of intervals which are $\ve$--close. Green diamonds are pairs of intervals which are $\ve$--distant. Blue squares are pairs of intervals which may or may not be $\ve$--close.}
	\label{fig:vysvetlenie_veta}
\end{figure}

\begin{proof}
We can fill almost the whole upper triangle of distance matrix by using patterns $A_0$, $A_1$, $B_0$, $B_1$, $C_0$, $C_1$ (Lemma~\ref{lema:postupnost_gulky}). Since a metric is commutative, $D_{k + h}(\ve)[i, j] = D_{k + h}[j, i]$. We can imagine all matrices being similar to the one in Figure~\ref{fig:vysvetlenie_veta} where 
\begin{itemize}
	\item black dots are given by patterns $A_1$, $B_1$, $C_1$, 
	\item green diamonds are given by patterns $A_0$, $B_0$, $C_0$ and 
	\item blue squares are in positions in the distance matrix which are not determined by the patterns. 
\end{itemize}
Then, the correlation integral is at least $1/2^{2(k + h)}\times$ ''the number of black dots in such a figure'' and is less than or equal to $1/2^{2(k + h)}\times$ ''the number of black dots plus blue squares''.
\end{proof}

Note that the length of the above-mentioned interval covering $\tc{\ell}{f_k}(\ve)$ need not converge to zero with $k\to\infty$. However, there are special cases for which the interval is degenerate.

Obviously, the correlation integral as a function of radius $\ve$ is non-decreasing. The next lemma shows a similar relation between correlation integrals for the map $f$ and its approximations $f_k$.

\begin{lema}
\label{lema:c_funkcia}
Let $g:\NNN\to [0, 1]$ be a function such that $g(k)\geq \alpha^k$. For all $1\leq \ell \leq \infty$ and large enough $k$,
\begin{align*}
\tc{\ell}{f_k}(\ve - g(k))\leq \tc{\ell}{f}(\ve)\leq \tc{\ell}{f_k}(\ve + g(k)).
\end{align*}
\end{lema}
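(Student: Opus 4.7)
The plan is to set up a natural bijection between the atoms $\{\cores(u)\}$ of $\mu_{\alpha,k}$ and the intervals $I(u)$ supporting $\mu_\alpha$, for $u\in\Sigma^k$, and to show that on corresponding blocks the two $\dis_\ell$--distances agree up to $\alpha^k$. The integrated comparison of measures will then follow by summing indicator inequalities over these $2^{2k}$ blocks, using that $\mu_\alpha(I(u))=\mu_{\alpha,k}(\{\cores(u)\})=2^{-k}$.

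The key estimate I would prove first is: for every $u,v\in\Sigma^k$ and every $x\in I(u)\cap X$, $y\in I(v)\cap X$,
\[
\bigl|\dis_\ell^f(x,y)-\dis_\ell^{f_k}(\cores(u),\cores(v))\bigr|\le\alpha^k.
\]
Since $X$ is $f$--invariant, (\ref{eq:iter_int}) yields $f^i(x)\in I(u\lp i)$ for every $i\ge 0$; analogously $f^i(y)\in I(v\lp i)$. Each such interval has length $\alpha^k$ and, by (\ref{eq:fk_map}), left endpoint $\cores(u\lp i)=f_k^i(\cores(u))$, respectively $\cores(v\lp i)=f_k^i(\cores(v))$. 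Therefore $f^i(x)-\cores(u\lp i)$ and $f^i(y)-\cores(v\lp i)$ both lie in $[0,\alpha^k]$, hence their difference lies in $[-\alpha^k,\alpha^k]$, so
\[
\bigl||f^i(x)-f^i(y)|-|\cores(u\lp i)-\cores(v\lp i)|\bigr|\le\alpha^k.
\]
Taking the supremum over $0\le i<\ell$, including $\ell=\infty$, gives the claim.

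Using $g(k)\ge\alpha^k$, the key estimate immediately delivers the pointwise implications
\[
\dis_\ell^{f_k}(\cores(u),\cores(v))\le\ve-g(k)\implies\dis_\ell^f(x,y)\le\ve,
\]
\[
\dis_\ell^f(x,y)\le\ve\implies\dis_\ell^{f_k}(\cores(u),\cores(v))\le\ve+g(k),
\]
valid for every such $(x,y)\in(I(u)\cap X)\times(I(v)\cap X)$. I would then sum the block contributions: the two product measures both assign mass $2^{-2k}$ to $I(u)\times I(v)$ and $\{\cores(u)\}\times\{\cores(v)\}$ respectively, and the implications above translate these per-block contributions into the two inclusions of $\{\dis_\ell\le\cdot\}$ sets at the level of measures.

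The main obstacle is producing the sharp constant $\alpha^k$ rather than $2\alpha^k$ in the key estimate: a naive triangle inequality chain $f^i(x)\to\cores(u\lp i)\to\cores(v\lp i)\to f^i(y)$ loses a factor of two, and only the observation that both offsets live in intervals of the \emph{same} length $\alpha^k$ allows the additive cancellation. A secondary bookkeeping point is that (\ref{eq:iter_int}) holds uniformly for all $u\in\Sigma^k$, including $u=1^k$, so no special case is required at the ``wrap-around'' where $f$ and $f_k$ may disagree pointwise on $I(1^k)$.
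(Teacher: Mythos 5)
Your proposal is correct and follows essentially the same route as the paper: both reduce the comparison of $\mu\times\mu$ and $\mu_k\times\mu_k$ to per-block implications over the $2^{2k}$ rectangles $I(u)\times I(v)$, and both obtain the sharp $\alpha^k$ bound from the fact that $f^i(x)$ and $f_k^i(\cores(u))=\cores(u\lp i)$ stay in the same interval of length $\alpha^k$ for every $i$ (the paper phrases this as its inequality~(\ref{eq:stvorce}) propagated through iterates). Your explicit remark that~(\ref{eq:iter_int}) covers $u=1^k$ so no wrap-around case is needed is a nice touch, but the argument is the same.
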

\begin{proof}
We have to prove that
\begin{align*}
\mu_k\times\mu_k\{(x, y)\ |\ \dis_{\ell}^{f_k}(x, y) \leq \ve - g(k)\}  &\leq \mu\times\mu\{(x, y)\ |\ \dis_{\ell}^f(x, y) \leq \ve\} \leq \\
                                                                        &\leq \mu_k\times\mu_k\{(x, y)\ |\ \dis_{\ell}^{f_k}(x, y) \leq \ve + g(k)\}.
\end{align*}

Denote the left point of $I(u)$ by $x_u$ and the right one by $y_u$. By definitions of measures, $\mu_k\times\mu_k((x_u, x_v)) = \mu\times\mu(I(u)\times I(v))$ and $\mu_k$ is zero on the set $X_0\setminus\{(x_u, x_v)\ |\ u, v\in\Sigma^k\}$. It is sufficient to show that
\begin{align*}
& (x_u, x_v)\in \{(x, y)\ |\ \dis^{f_k}_{\ell}(x, y) \leq \ve - \alpha^k\} \Rightarrow I(u)\times I(v) \subset \{(x, y)\ |\ \dis^f_{\ell}(x, y) \leq \ve\},\\
& I(u)\times I(v) \cap \{(x, y)\ |\ \dis^f_{\ell}(x, y) \leq \ve\}\neq \emptyset \Rightarrow (x_u, x_v)\in \{(x, y)\ |\ \dis^{f_k}_{\ell}(x, y) \leq \ve + \alpha^k\}.
\end{align*}
The biggest distance of points from the intervals $I(u)$ and $I(v)$, where $u \leq v$, is for $x_u$ and $y_v$. Since the length of an interval is $\alpha^k$, we have $\dis(x_u, y_v) = \dis(x_u, x_v) + \alpha^k$. Similarly, points with the smallest distance are represented by the right lower corner of $I(u)\times I(v)$, i.e.~$(y_u, x_v)$. Then $\dis(x_u, x_v) = \dis(y_u, x_v) + \alpha^k$. If $(x, y)\in I(u)\times I(v)$, we thus have
\begin{equation}
\label{eq:stvorce}
\dis(x_u, x_v) - \alpha^k\leq \dis(x, y)\leq \dis(x_u, x_v) + \alpha^k.
\end{equation}

Suppose that $\sup\{\dis(f_k^i(x_u), f_k^i(x_v))\ |\ 0\leq i < \ell\} \leq \ve - \alpha^k$. Since $f_k\times f_k(x, y)$ and $f\times f(x, y)$ belong to the same square and $f_k\times f_k(x_u, x_v) = (x_{u\lp 1}, x_{v\lp 1})$, from~(\ref{eq:stvorce}) we have that $\sup\{\dis(f^i(x), f^i(y))\ | \ 0\leq i < \ell\} \leq \ve$ for every $(x, y)\in I(u)\times I(v)$.

Conversely, suppose that $\sup\{\dis(f^i(x), f^i(y))\ |\ 0\leq i < \ell\} \leq \ve$ for some $(x, y)\in I(u)\times I(v)$. Thus similarly as before $\sup\{\dis(f_k^i(x_u), f_k^i(x_v))\ |\ 0\leq i < \ell\} \leq \ve + \alpha^k$.

Since $g(k)\geq \alpha^k$ for every $k$ and $\tc{\ell}{f_k}$ is a non-decreasing function, the proof is complete.
\end{proof}

Denote the smallest integer not less than $x\in\RRR$ by $\left\lceil x\right\rceil$. Let $J\subset [0, 1]$ be a subinterval of length $\beta$. Then there are at most $h_{J} = \left\lceil \beta/\alpha^k\right\rceil + 1$ words $u\in \Sigma^k$ such that $I(u)\cap J\neq\emptyset$. Let $h_{\beta}$ be the maximal number of words $u\in\Sigma^k$ such that $I(u)\cap J'\neq \emptyset$ where $J'\subset [0, 1]$ is an interval of length $\beta$. Let $u_1 < u_2 < \ldots < u_{h_{\beta}}$ be these words. Clearly, $h_{\beta}\leq h_J$ and $u_i = u_1\pp (i - 1)$.

\begin{lema}
\label{lema:f_k_prechod_f}
Let $g:\NNN\to [0, 1]$ satisfy $g(k)\geq \alpha^k$. If $\lim_{i\to\infty} h_{g(i)}/2^i$ exists and is equal to zero, then $\tc{\ell}{f_k}(\ve + g(k)) - \tc{\ell}{f_k}(\ve - g(k))\to 0$ as $k\to\infty$.
\end{lema}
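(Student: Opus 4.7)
The plan is to pass to a counting problem on $\Sigma^k \times \Sigma^k$. Since $\mu_k$ is uniform on the orbit $\Orb_{f_k}(0) = \{\cores(u) : u \in \Sigma^k\}$ with mass $1/2^k$ per point, Lemma~\ref{lema:periodic} yields
\begin{equation*}
\tc{\ell}{f_k}(\ve + g(k)) - \tc{\ell}{f_k}(\ve - g(k)) = \frac{N_{\ell}(k)}{2^{2k}},
\end{equation*}
where $N_{\ell}(k) = \card\{(u,v) \in \Sigma^k \times \Sigma^k : \dis_{\ell}(u,v) \in (\ve - g(k), \ve + g(k)]\}$. It thus suffices to prove $N_{\ell}(k) = o(2^{2k})$ for every $1 \leq \ell \leq \infty$.

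For $\ell = 1$, each fixed $u \in \Sigma^k$ admits at most $2 h_{2g(k)}$ partners $v$, since $\cores(v)$ must lie in one of two intervals of length $2g(k)$ centred at $\cores(u) \pm \ve$ and the definition of $h_{2g(k)}$ bounds the number of left endpoints in such an interval. Hence $N_{1}(k) \leq 2 \cdot 2^{k} h_{2g(k)}$. For $1 < \ell < \infty$ I would attach to every pair $(u,v)$ counted by $N_{\ell}(k)$ the smallest index $i^{\ast} \in \{0, \dots, \ell - 1\}$ at which $\dis(u \lp i^{\ast}, v \lp i^{\ast}) > \ve - g(k)$; such an index exists because $\dis_{\ell}$ is a maximum, and the inequality $\dis(u \lp i^{\ast}, v \lp i^{\ast}) \leq \dis_{\ell}(u,v) \leq \ve + g(k)$ forces the image pair to be counted by $N_{1}(k)$. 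Each image has at most $\ell$ preimages, one for each admissible value of $i^{\ast}$, so $N_{\ell}(k) \leq \ell \, N_{1}(k) \leq 2 \ell \cdot 2^{k} h_{2g(k)}$.

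The main obstacle is the case $\ell = \infty$, where the preimage factor in the previous argument becomes the orbit period $2^{k}$ and is too weak. Here the plan is to invoke Lemma~\ref{lema:najvacsia_vzdialenost_cyklus_intervalov}: every pair $(u,v)$ whose common prefix has length $h \leq k - 2$ satisfies $\dis_{\infty}(u,v) = \dis(0^{k}, 0^{h} 1 1 w)$ for a canonical word $0^{h} 1 1 w \in \Sigma^{k}$ depending only on $(u,v)$, and this canonical configuration is attained at a unique $n$ modulo $2^{k}$ (up to swapping the two coordinates). Consequently each canonical word is shared by at most $2 \cdot 2^{k}$ ordered pairs, while the canonical words whose $\cores$-value falls into the annulus $(\ve - g(k), \ve + g(k)]$ are among the points of $\cores(\Sigma^{k})$ inside a single interval of length $2g(k)$ and therefore number at most $h_{2g(k)}$. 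Pairs with common prefix of length $k - 1$ or $k$ have $\dis_{\infty}(u,v) \leq \alpha^{k-1}$ by Lemma~\ref{lema:trajektorie_dvoch_bodov_rozsah} and so fall below the annulus for all large $k$, giving $N_{\infty}(k) \leq 2 \cdot 2^{k} h_{2g(k)}$ eventually in $k$. Finally, a direct inspection of $h_{\beta} = \lceil \beta/\alpha^{k} \rceil + 1$ shows $h_{2g(k)} \leq 2 h_{g(k)} + 2$, so the hypothesis $h_{g(k)}/2^{k} \to 0$ propagates to $h_{2g(k)}/2^{k} \to 0$, and dividing the bounds by $2^{2k}$ completes the argument in both the finite and infinite cases.
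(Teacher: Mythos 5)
Your argument is correct, and its overall architecture matches the paper's: reduce the difference of correlation integrals to counting pairs of words whose $\dis_{\ell}$-distance lands in an annulus of width $2g(k)$, bound the $\ell=1$ and $\ell=\infty$ counts by $O(2^{k}h_{g(k)})$ using the interval structure of $\cores(\Sigma^k)$ (your canonical-word argument via Lemma~\ref{lema:najvacsia_vzdialenost_cyklus_intervalov} is exactly what the paper extracts from the first row of the distance matrix together with Corollary~\ref{dosledok:pocet_blizke_orbity}), and reduce finite $\ell$ to $\ell=1$ at the cost of a factor $\ell$. The one step where you genuinely diverge is that reduction for $1<\ell<\infty$: the paper follows Manning--Simon and argues geometrically, drawing the lines $y=x\pm\ve\pm g(k)$ in $[0,1]^2$, counting the at most $8h_{g(k)}$ squares $I(u)\times I(v)$ per column that the strips can meet, and pushing this forward through $f_k$ for each of the $\ell$ iterates; you instead assign to each pair the first index $i^{\ast}<\ell$ at which the distance exceeds $\ve-g(k)$ and observe that $(u,v)\mapsto(u\lp i^{\ast},v\lp i^{\ast})$ is an at most $\ell$-to-one map into the $\ell=1$ annulus. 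Your version is cleaner and purely combinatorial, exploiting that $\lp i$ is a bijection of $\Sigma^k$, whereas the paper's geometric version has the advantage of applying verbatim at the level of the measures $\mu_k\times\mu_k$ without first discretising to word pairs. Your closing observation that $h_{2g(k)}\leq 2h_{g(k)}+2$ (or simply $h_{2g(k)}\leq 2h_{g(k)}$, covering an interval of length $2g(k)$ by two of length $g(k)$) correctly transfers the hypothesis; the paper sidesteps this by working with width-$g(k)$ annuli around each of $\ve\pm g(k)$ directly, but the discrepancy is only a bounded constant and does not affect the conclusion.
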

\begin{proof}
In the first row of $D_k(\ve + g(k))$ there are at most $2\cdot h_{g(k)}$ ones that are not in the first row of $D_k(\ve - g(k))$. Therefore, from Corollary~\ref{dosledok:pocet_blizke_orbity}, there are at most $2^k\cdot 2\cdot h_{g(k)}$ pairs of words $u, v\in\Sigma^k$ such that $\ve - g(k) < \dis_{\infty}(u, v)\leq \ve + g(k)$. Thus
\begin{align*}
\tc{\infty}{f_{k}}(\ve + g(k)) - \tc{\infty}{f_{k}}(\ve - g(k)) \leq \frac{2\cdot h_{g(k)}}{2^k}.
\end{align*}
Analogously,
\begin{align*}
\tc{1}{f_{k}}(\ve + g(k)) - \tc{1}{f_{k}}(\ve - g(k)) \leq \frac{4\cdot h_{g(k)}}{2^k}.
\end{align*}

Now, let $\ell < \infty$. We prove this case from more geometrical point of view motivated by~\cite{Manning}.

Consider a picture of the square $[0, 1]^2$ together with $2^{2k}$ smaller squares $I(u)\times I(v)$ where $u, v\in\Sigma^k$. For every $u, v\in\Sigma^k$, the measures $\mu\times\mu(I(u)\times I(v)) = 1/2^{2k} = \mu_k\times\mu_k(I(u)\times I(v))$. Let the lines $y = x + \ve$ and $y = x - \ve$ be drawn in the picture. Each line intersects at most two squares in every column, that is $\mu_k\times\mu_k\{(x, y)\in X\times X\ |\ \dis_{1}^{f_k}(x, y) = \ve\} \leq 4/2^k$. The function $f_k$ maps the picture on the same one and therefore $\mu_k\times\mu_k\{(x, y)\in X\times X\ |\ \dis_{1}^{f_k}(f_k(x), f_k(y)) = \ve\} \leq 4/2^k$. This gives
\begin{align*}
\mu_k\times\mu_k\{(x, y)\in X^2\  &|\ \dis_{2}^{f_k}(x, y) = \ve\} \leq \mu_k\times\mu_k\{(x, y)\in X^2\ |\ \dis_{1}^{f_k}(x, y) = \ve\} + \\
										& + \mu_k\times\mu_k\{(x, y)\in X^2\ |\ \dis_{1}^{f_k}(f_k(x), f_k(y)) = \ve\} \leq 2\cdot \frac{4}{2^k}.
\end{align*}
Similarly, $\mu_k\times\mu_k\{(x, y)\in X^2\ |\ \dis_{\ell}^{f_k}(x, y) = \ve\} \leq \ell\cdot 4/2^k$.

By the definition of correlation integrals,
\begin{align*}
\tc{\ell}{f_k}(\ve + g(k)) - \tc{\ell}{f_k}(\ve - g(k)) = \mu_k\times\mu_k\{(x, y)\in X^2\ |\ \ve - g(k) < \dis_{\ell}^{f_k}(x, y)\leq\ve + g(k)\}.
\end{align*}

We again consider the picture as before, but instead of lines $y = x \pm \ve$, we do consider lines $y = x + \ve \pm g(k)$ and $y = x - \ve \pm g(k)$. Let $k$ be large enough that $\ve - g(k) > g(k) - \ve$. The number of squares with points lying between the lines $y = x + \ve + g(k)$ and $y = x + \ve - g(k)$, respectively between $y = x - \ve + g(k)$ and $y = x - \ve - g(k)$, is at most $8\cdot h_{g(k)}$. Therefore, similarly to the above-mentioned case,
\begin{align*}
\mu_k\times\mu_k\{(x, y)\in X\times X\ |\ \ve - g(k) < \dis_{\ell}^{f_k}(x, y)\leq\ve + g(k)\} \leq 8\cdot h_{g(k)}\cdot\ell/2^k.
\end{align*}
Now, the statement is proved.
\end{proof}

From previous lemmas, we can use the correlation integral $\tc{\ell}{f_k}$ for approximation of the correlation integral $\tc{\ell}{f}$. The choice of $k$ depends on neither $\ve$ nor $\alpha$ which is clear from the previous proof. However, the choice of $k > 0$ satisfying $|\tdet{\infty}{f}(\ve) - \tdet{\infty}{f_k}(\ve)| < \varepsilon$ does depend on $\ve$.

\begin{dosledok}
\label{cor:rozdiel_ck_c}
For every $k \geq 1$ such that $\ve > \alpha^{k}$, the maximal errors of approximation are
\begin{eqnarray*}
\begin{array}{ll}
|\tc{1}{f_k}(\ve) - \tc{1}{f}(\ve)|            	&< 8/2^k\\
|\tc{\ell}{f_k}(\ve) - \tc{\ell}{f}(\ve)|    		&< 16\cdot\ell/2^k\\
|\tc{\infty}{f_k}(\ve) - \tc{\infty}{f}(\ve)|  	&< 4/2^k,
\end{array}
\end{eqnarray*}
where $1 < \ell < \infty$. If $\ve\in(\alpha^{h + 1}, \alpha^h]$ where $h\geq 0$, then
\begin{align*}
\left|\tdet{\infty}{f_{\alpha}}(\ve) - \tdet{\infty}{f_{\alpha, k}}(\ve)\right| \leq \frac{24}{2^{k - h - 1} - 8}.
\end{align*}
\end{dosledok}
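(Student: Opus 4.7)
The three correlation--integral bounds follow by combining Lemma~\ref{lema:c_funkcia} with the intermediate estimates inside the proof of Lemma~\ref{lema:f_k_prechod_f}, both applied with $g(k)=\alpha^k$. Since the hypothesis $\ve>\alpha^k$ gives $g(k)\ge\alpha^k$, Lemma~\ref{lema:c_funkcia} supplies $\tc{\ell}{f_{\alpha,k}}(\ve-\alpha^k)\le \tc{\ell}{f_\alpha}(\ve)\le \tc{\ell}{f_{\alpha,k}}(\ve+\alpha^k)$ for every $1\le\ell\le\infty$, and monotonicity of $\tc{\ell}{f_{\alpha,k}}$ in its second argument then forces
\begin{equation*}
|\tc{\ell}{f_{\alpha,k}}(\ve)-\tc{\ell}{f_\alpha}(\ve)|\le \tc{\ell}{f_{\alpha,k}}(\ve+\alpha^k)-\tc{\ell}{f_{\alpha,k}}(\ve-\alpha^k).
\end{equation*}
The right--hand side was bounded in the proof of Lemma~\ref{lema:f_k_prechod_f} by $2h_{g(k)}/2^k$, $4h_{g(k)}/2^k$, and $8\ell\cdot h_{g(k)}/2^k$ for $\ell=\infty$, $\ell=1$, and $1<\ell<\infty$ respectively. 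Finally the definition of $h_\beta$ gives $h_{\alpha^k}\le\lceil\alpha^k/\alpha^k\rceil+1=2$, and the three stated constants drop out.

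For the asymptotic determinism in the infinite horizon, write $A=\tc{\infty}{f_\alpha}(\ve)$, $B=\tc{1}{f_\alpha}(\ve)$, $A_k=\tc{\infty}{f_{\alpha,k}}(\ve)$, $B_k=\tc{1}{f_{\alpha,k}}(\ve)$ and decompose
\begin{equation*}
\frac{A}{B}-\frac{A_k}{B_k}=\frac{A-A_k}{B}+\frac{A_k}{B_k}\cdot\frac{B_k-B}{B}.
\end{equation*}
Since $\dis_\infty\ge\dis_1$ we have $A_k/B_k\le 1$, whence
\begin{equation*}
\left|\tdet{\infty}{f_\alpha}(\ve)-\tdet{\infty}{f_{\alpha,k}}(\ve)\right|\le \frac{|A-A_k|+|B_k-B|}{B}.
\end{equation*}
The numerator is already controlled by the first part of the corollary, so everything hinges on a lower bound for $B$.

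The required lower bound uses the hypothesis $\ve>\alpha^{h+1}$. Any two points lying in a common interval $I(u)$ with $u\in\Sigma^{h+1}$ are at distance at most $\alpha^{h+1}<\ve$, and the same reasoning applies to $\mu_{\alpha,k}$ via $\mu_{\alpha,k}(I(u))=1/2^{h+1}$ for $u\in\Sigma^{h+1}$ (valid because $\ve>\alpha^k$ together with $\ve\le\alpha^h$ forces $k\ge h+1$). Summing over the $2^{h+1}$ words of length $h+1$ yields $B_k\ge 1/2^{h+1}$, and the triangle inequality then gives
\begin{equation*}
B\ge B_k-|B_k-B|\ge \frac{1}{2^{h+1}}-\frac{8}{2^k}=\frac{2^{k-h-1}-8}{2^k},
\end{equation*}
which is precisely the expression appearing in the denominator of the stated bound. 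Substituting this lower bound and the numerator estimate into the ratio above finishes the argument.

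The only genuinely delicate step is the last bookkeeping move: I keep $B$ in the denominator of the split and retreat to the lower bound through $B\ge B_k-8/2^k$, rather than using the direct inequality $B\ge 1/2^{h+1}$. That detour is what manufactures the form $2^{k-h-1}-8$ rather than the cruder $2^{k-h-1}$. Everything else is substitution of the constants already computed.
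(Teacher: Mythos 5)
Your argument is correct, and it splits naturally into a part that coincides with the paper and a part that does not. The three correlation-integral bounds are obtained exactly as in the paper: Lemma~\ref{lema:c_funkcia} with $g(k)=\alpha^k$ sandwiches $\tc{\ell}{f}(\ve)$ between $\tc{\ell}{f_k}(\ve\pm\alpha^k)$, the increments $2h_{g(k)}/2^k$, $4h_{g(k)}/2^k$, $8\ell h_{g(k)}/2^k$ are read off from the proof of Lemma~\ref{lema:f_k_prechod_f}, and $h_{\alpha^k}\le 2$ gives the stated constants. For the determinism bound you take a genuinely different route. The paper sandwiches the quotient directly, bounding $\tdet{\infty}{f}(\ve)$ between $(\tc{\infty}{f_k}(\ve)-4/2^k)/(\tc{1}{f_k}(\ve)+8/2^k)$ and $(\tc{\infty}{f_k}(\ve)+4/2^k)/(\tc{1}{f_k}(\ve)-8/2^k)$ and then rewriting each side via partial fractions as $\tdet{\infty}{f_k}(\ve)\pm 12/(2^k\tc{1}{f_k}(\ve)\mp 8)$, which is where the constant $24$ comes from; it then bounds $\tc{1}{f_k}(\ve)\ge 2^{-h-1}$ using $\dis(0^k,0^h1^{k-h})<\alpha^{h+1}$ together with patterns $A_1$, $B_1$. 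You instead use the identity $A/B-A_k/B_k=(A-A_k)/B+(A_k/B_k)(B_k-B)/B$ with $A_k\le B_k$, and you obtain the lower bound $B_k\ge 2^{-h-1}$ from the more elementary observation that the $\ve$-close set contains the disjoint union $\bigcup_{u\in\Sigma^{h+1}}I(u)\times I(u)$, each factor of $\mu_{\alpha,k}$-measure $2^{-h-1}$ (using $k\ge h+1$). Your decomposition buys a sharper constant, $12/(2^{k-h-1}-8)$ in place of $24/(2^{k-h-1}-8)$, while still landing on the stated form because you deliberately pass through $B\ge B_k-8/2^k$; going straight to $B\ge 2^{-h-1}$ would give the cleaner $12/2^{k-h-1}$. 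One shared caveat, not a defect of your argument relative to the paper's: both proofs implicitly assume $2^{k-h-1}>8$, since otherwise the claimed right-hand side is nonpositive and the inequality is vacuous or false as written.
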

\begin{proof}
From Lemma~\ref{lema:c_funkcia}, the integral $\tc{\ell}{f}(\ve) \in (\tc{\ell}{f_k}(\ve - \alpha^k), \tc{\ell}{f_k}(\ve + \alpha^k))$. Clearly, $\tc{\ell}{f_k}(\ve)$ belongs into this interval, thus $|\tc{\ell}{f_k}(\ve) - \tc{\ell}{f}(\ve)| < \tc{\ell}{f_k}(\ve + \alpha^k) - \tc{\ell}{f_k}(\ve - \alpha^k)$. The length of each $I(u)$ where $u\in\Sigma^k$ is $\alpha^k$, therefore $h_{\alpha^k} = 2$. The statement about correlation integrals now follows from Proof of Lemma~\ref{lema:f_k_prechod_f}.

By an easy computation, 
\begin{itemize}
	\item $32/(2^k \tc{1}{f_k}(\ve)(2^k \tc{1}{f_k}(\ve) - 8)) = -4/(2^k \tc{1}{f_k}(\ve)) + 4/(2^k \tc{1}{f_k}(\ve) - 8)$ and similarly, 
	\item $32/(2^k \tc{1}{f_k}(\ve)(2^k \tc{1}{f_k}(\ve) + 8)) = 4/(2^k \tc{1}{f_k}(\ve)) - 4/(2^k \tc{1}{f_k}(\ve) - 8)$. 
\end{itemize}	
Therefore using the worst approximations of correlation integrals, we have
\begin{align*}
\tdet{\infty}{f_k}(\ve) - \frac{12}{2^k \tc{1}{f_k}(\ve) + 8} &= \tdet{\infty}{f_k}(\ve) - \frac{8}{2^k \tc{1}{f_k}(\ve) + 8} - \frac{4}{2^k \tc{1}{f_k}(\ve)} + \frac{32}{2^k \tc{1}{f_k}(\ve)\cdot(2^k \tc{1}{f_k}(\ve) + 8)}\leq \\
    &\leq \frac{\tc{\infty}{f_k}(\ve) - 4/2^k}{\tc{1}{f_k} + 8/2^k} \leq \tdet{\infty}{f}(\ve) \leq \frac{\tc{\infty}{f_k}(\ve) + 4/2^k}{\tc{1}{f_k} - 8/2^k}\leq \\
    \leq \tdet{\infty}{f_k}(\ve) + \frac{8}{2^k \tc{1}{f_k}(\ve) - 8} &+ \frac{4}{2^k \tc{1}{f_k}(\ve)} + \frac{32}{2^k \tc{1}{f_k}(\ve)\cdot(2^k \tc{1}{f_k}(\ve) - 8)} = \tdet{\infty}{f_k}(\ve) + \frac{12}{2^k \tc{1}{f_k}(\ve) - 8}.
\end{align*}
Thus $|\tdet{\infty}{f}(\ve) - \tdet{\infty}{f_k}(\ve)| \leq 24/(2^k \tc{1}{f_k}(\ve) - 8)$. The integral $\tc{1}{f_k}(\ve) \geq 2^{- h - 1}$ for $\alpha^{h + 1} < \ve \leq \alpha^h$ which follows from $\dis(0^k, 0^h 1^{k - h}) < \alpha^{h + 1}$  and from $A_1$, $B_1$.
\end{proof}

By the previous corollary, we can use the distance matrix not only for $f_k$ but for $f$ as well. Next theorem shows that the matrix with a small change of $\ve$ cannot change much.

\begin{thm}
\label{thm:prechod}
For every $1\leq\ell\leq\infty$, functions $\tc{\ell}{f}$, $\tdet{\ell}{f}$ are continuous at $\ve$, and $\tc{\ell}{f}(\ve) = \lim_{k\to\infty}\tc{\ell}{f_k}(\ve)$ and $\tdet{\ell}{f}(\ve) = \lim_{k\to\infty}\tdet{\ell}{f_{k}}(\ve)$.
\end{thm}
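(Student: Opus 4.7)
The plan is to couple the approximation level $k$ and the perturbation $\delta$ so that Lemma \ref{lema:c_funkcia} and Lemma \ref{lema:f_k_prechod_f} can be applied in tandem. Throughout, observe that each $\tc{\ell}{f_k}(\cdot)$ is non-decreasing in $\ve$.

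First I would prove the pointwise convergence $\tc{\ell}{f_k}(\ve) \to \tc{\ell}{f}(\ve)$. This is essentially already in Corollary~\ref{cor:rozdiel_ck_c}: fix $\ve > 0$, take $k$ so large that $\alpha^k < \ve$, and then the bounds $|\tc{\ell}{f_k}(\ve) - \tc{\ell}{f}(\ve)| < 16\ell/2^k$ (and the sharper variants for $\ell \in \{1,\infty\}$) force the limit. The convergence $\tdet{\ell}{f_k}(\ve) \to \tdet{\ell}{f}(\ve)$ then follows, for finite $\ell$, from the defining formula~(\ref{def:as-det}) and the convergence of each of $\tc{1}{f_k}, \tc{\ell}{f_k}, \tc{\ell + 1}{f_k}$; for $\ell = \infty$ it is already the second bound in Corollary~\ref{cor:rozdiel_ck_c}. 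A standard fact to invoke en passant is that $\tc{1}{f}(\ve) > 0$ for every $\ve > 0$, because $\{(x, y) \in X \times X : \dis(x, y) \leq \ve\}$ contains a relative neighbourhood of the diagonal, and hence has positive $\mu \times \mu$ measure.

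The continuity of $\tc{\ell}{f}$ at a fixed $\ve > 0$ is the main step. Given $\delta > 0$ small, I would pick $k = k(\delta)$ satisfying $\alpha^k \leq \delta < \alpha^{k - 1}$, so $k(\delta) \to \infty$ as $\delta \to 0^+$. Applying Lemma~\ref{lema:c_funkcia} with $g(k) = \alpha^k$ and then using $\alpha^k \leq \delta$ together with monotonicity of $\tc{\ell}{f_k}$ gives
\begin{align*}
\tc{\ell}{f}(\ve + \delta) - \tc{\ell}{f}(\ve - \delta) \leq \tc{\ell}{f_k}(\ve + 2\delta) - \tc{\ell}{f_k}(\ve - 2\delta).
\end{align*}
The right-hand side is exactly the quantity controlled inside the proof of Lemma~\ref{lema:f_k_prechod_f} with $g(k) = 2\delta$, and is bounded above by $8\ell\, h_{2\delta}/2^k$ for finite $\ell$ (with the corresponding $2 h_{2\delta}/2^k$ and $4 h_{2\delta}/2^k$ bounds for $\ell = \infty$ and $\ell = 1$). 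By the choice $\alpha^k \leq \delta \leq \alpha^{k - 1}$, we have $2 \leq 2\delta/\alpha^k \leq 2/\alpha$, so $h_{2\delta} = \lceil 2\delta/\alpha^k \rceil + 1 \leq 2/\alpha + 2$ is bounded independently of $\delta$. The estimate therefore collapses to $O_{\alpha,\ell}(1)/2^{k(\delta)} \to 0$, giving the desired continuity of $\tc{\ell}{f}$.

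Finally, continuity of $\tdet{\ell}{f}$ at $\ve$ follows by plugging the continuity of $\tc{1}{f}, \tc{\ell}{f}, \tc{\ell + 1}{f}$ (or $\tc{\infty}{f}$ when $\ell = \infty$) into formula~(\ref{def:as-det}); the denominator $\tc{1}{f}(\ve)$ is bounded away from zero on any small neighbourhood of the fixed $\ve$, again because $\tc{1}{f}$ is non-decreasing and strictly positive at $\ve$. The main obstacle is the coupled choice of $k$ and $\delta$ in the continuity step: a uniform $k$ will not do (because the step functions $\tc{\ell}{f_k}$ may genuinely jump near $\ve$), and a uniform $\delta$ will not do either (because $h_{g(k)}/2^k$ does not vanish for fixed $g$); the condition $\alpha^k \leq \delta \leq \alpha^{k - 1}$ is the exact calibration that keeps $h$ bounded while $2^k$ diverges.
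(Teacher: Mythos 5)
Your proof is correct and follows essentially the same route as the paper: the limit statements come from Corollary~\ref{cor:rozdiel_ck_c}, and continuity comes from combining Lemma~\ref{lema:c_funkcia} with the quantitative bound inside Lemma~\ref{lema:f_k_prechod_f}, then dividing by $\tc{1}{f}(\ve)>0$ for the determinism. The only (immaterial) difference is the calibration: the paper fixes $\delta=\alpha^{k}$ along a sequence and invokes monotonicity of $\tc{\ell}{f}$, while you choose $k(\delta)$ with $\alpha^{k}\leq\delta<\alpha^{k-1}$ to bound $h_{2\delta}$ by a constant, which handles arbitrary $\delta$ directly.
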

\begin{proof}
The latter statements follow immediately from Corollary~\ref{cor:rozdiel_ck_c}.

Put $k_i = k_0 + i - 1$ with $k_0 > 0$ such that $\ve > \alpha^{k_0}$. Set $\ve_{2i} = \ve + \alpha^{k_i}$ and $\ve_{2i - 1} = \ve - \alpha^{k_i}$. Then $\ve_i\to\ve$ as $i\to\infty$, and for all $i\in\NNN$ we have that $\ve_{2i - 1} < \ve < \ve_{2i}$. From Lemma~\ref{lema:f_k_prechod_f}, the difference $\tc{\ell}{f_{k_i}}(\ve_{2i}) - \tc{\ell}{f_{k_i}}(\ve_{2i - 1})\to 0$ as $i\to\infty$ and from Corollary~\ref{cor:rozdiel_ck_c}, $\tc{\ell}{f}(\ve') - \tc{\ell}{f_{k'}}(\ve')\to 0$ as $k'\to\infty$ for all $\ve' > 0$. Thus $\tc{\ell}{f}(\ve_{2i}) - \tc{\ell}{f}(\ve_{2i - 1})\to 0$ as $i\to\infty$ and the monotonicity of $\tc{\ell}{f}$ completes the proof.
\end{proof}

Denote the set of all limit points of $(\tC{\ell}{f}(x, i, \ve))_i$ by $\tc{\ell}{f}(x, \ve)$ and the set of limit points of $(\tDET{\ell}{f}(x, i, \ve))_i$ by $\tdet{\ell}{f}(x, \ve)$. If $\tc{\ell}{f}(x, \ve)$, respectively~$\tdet{\ell}{f}(x, \ve)$, is a singleton, we identify the set with the value of its unique element. Since $\tc{\ell}{f}$ is continuous at every $\ve$, we have $\tc{\ell}{f}(x, \ve) = \tc{\ell}{f}(\ve)$ for $\mu$--a.e.~$x\in [0, 1]$, cf.~\cite{pesin-tempelman}. We show that the limit of $(\tC{\ell}{f}(x, i, \ve))_i$ exists for all $x\in[0, 1]$. First, we will prove the convergence for $x = 0$.

\begin{lema}
\label{lema:c_l_limita}
Let $1\leq\ell\leq\infty$. Then $\tc{\ell}{f}(0, \ve) = \tc{\ell}{f}(\ve)$.
\end{lema}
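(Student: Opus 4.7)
The plan is to leverage the approximating maps $f_k$ together with the inequality~(\ref{eq:dvaja_policajti}) and the periodicity of $0$ under $f_k$. Since $0 \in X_k$ for every $k \geq 1$ and $0$ is $2^k$-periodic under $f_k$ (its orbit consisting precisely of the left endpoints of the intervals $I(u)$, $u \in \Sigma^k$), Lemma~\ref{lema:periodic} applied to $g = f_k$ gives
\begin{align*}
\lim_{n \to \infty} \tC{\ell}{f_k}(0, n, \ve') = \tC{\ell}{f_k}(0, 2^k, \ve') = \tc{\ell}{f_k}(\ve')
\end{align*}
for every $\ve' > 0$ and every $1 \leq \ell \leq \infty$.

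First I would instantiate~(\ref{eq:dvaja_policajti}) with $k' = k$ and $x = 0 \in X_k$, obtaining
\begin{align*}
\tC{\ell}{f_k}(0, n, \ve - 2\alpha^k) \;\leq\; \tC{\ell}{f}(0, n, \ve) \;\leq\; \tC{\ell}{f_k}(0, n, \ve + 2\alpha^k)
\end{align*}
for every $n$ and every $k$ large enough that $\ve > 2\alpha^k$. Passing to $\liminf_n$ on the left and $\limsup_n$ on the right, the previous observation collapses both outer terms to values of $\tc{\ell}{f_k}$, yielding
\begin{align*}
\tc{\ell}{f_k}(\ve - 2\alpha^k) \;\leq\; \liminf_{n \to \infty} \tC{\ell}{f}(0, n, \ve) \;\leq\; \limsup_{n \to \infty} \tC{\ell}{f}(0, n, \ve) \;\leq\; \tc{\ell}{f_k}(\ve + 2\alpha^k).
\end{align*}

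Next I would let $k \to \infty$ and show that both outer bounds converge to $\tc{\ell}{f}(\ve)$, which forces $\tc{\ell}{f}(0, \ve)$ to be the singleton $\{\tc{\ell}{f}(\ve)\}$. Set $g(k) = 2\alpha^k$; then $g(k) \geq \alpha^k$ and $h_{g(k)} \leq \lceil 2 \rceil + 1 = 3$, so $h_{g(k)}/2^k \to 0$. Lemma~\ref{lema:f_k_prechod_f} then gives $\tc{\ell}{f_k}(\ve + 2\alpha^k) - \tc{\ell}{f_k}(\ve - 2\alpha^k) \to 0$. Combining this with Theorem~\ref{thm:prechod}, which provides $\tc{\ell}{f_k}(\ve) \to \tc{\ell}{f}(\ve)$, both $\tc{\ell}{f_k}(\ve \pm 2\alpha^k)$ converge to the same limit $\tc{\ell}{f}(\ve)$, completing the proof.

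The argument is essentially a squeeze, and there is no real obstacle: the work has already been done in~(\ref{eq:dvaja_policajti}) (which produces the periodic $f_k$-trajectory from the $f$-trajectory of $0$ up to an $\alpha^k$ error), in Lemma~\ref{lema:periodic} (which converts periodic correlation sums to a single value), in Lemma~\ref{lema:f_k_prechod_f} (which rules out a $\ve$-jump in $\tc{\ell}{f_k}$ on a window of width $2\alpha^k$), and in Theorem~\ref{thm:prechod} (which identifies the limit). The one place requiring a small check is that the hypothesis $\ve > 2\alpha^k$ becomes eventually true, so the bound~(\ref{eq:dvaja_policajti}) and Lemma~\ref{lema:c_funkcia} (implicit in Lemma~\ref{lema:f_k_prechod_f}) are applicable for all sufficiently large $k$.
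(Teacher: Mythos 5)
Your proposal is correct and follows essentially the same route as the paper: apply~(\ref{eq:dvaja_policajti}) at $x=0\in X_k$, use Lemma~\ref{lema:periodic} (periodicity of $0$ under $f_k$) to collapse the outer correlation sums to $\tc{\ell}{f_k}(\ve\pm 2\alpha^k)$, squeeze via Lemma~\ref{lema:f_k_prechod_f} with $g(k)=2\alpha^k$, and identify the limit by Theorem~\ref{thm:prechod}. The only (immaterial) discrepancy is the value of $h_{g(k)}$ --- you bound it by $3$ while the paper uses $4$ --- but either way $h_{g(k)}/2^k\to 0$, which is all that is needed.
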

\begin{proof}
We can use bounds~(\ref{eq:dvaja_policajti}) for every $k\geq 1$ since $0$ belongs to every $X_k$. Thus
\begin{align*}
\tC{\ell}{f_{k}}(0, n, \ve - 2\cdot\alpha^k) \leq \tC{\ell}{f}(0, n, \ve) \leq \tC{\ell}{f_{k}}(0, n, \ve + 2\cdot\alpha^k).
\end{align*}
The point $0\in X$ is periodic for every $f_k$, then from Lemma~\ref{lema:periodic} and the definition of $\tc{\ell}{f_k}(\ve)$,
\begin{align*}
\tc{\ell}{f}(0, \ve)\subset \bigcap_{k\geq 0} \left[\tc{\ell}{f_k}(\ve - 2\cdot\alpha^k), \tc{\ell}{f_k}(\ve + 2\cdot\alpha^k)\right].
\end{align*}
Using Lemma~\ref{lema:f_k_prechod_f}, where $g(k) = 2\alpha^k$ and $h_{g(k)} = 4$, $\tc{\ell}{f}(0, \ve)$ is a singleton. By Theorem~\ref{thm:prechod}, its value is $\tc{\ell}{f}(\ve)$.
\end{proof}

Denote by $\eper(f)$ the set of \emph{eventually periodic points} of the map $f$, i.e.~if $x\in\eper(f)$, then there are $i\geq 0$ and $p > 0$ with the property that $f^{m\cdot p}(f^i(x)) = f^i(x)$ for every $m \geq 0$. We call such smallest $p > 0$ the period of $x$. 

Let $x\in [0, 1]$ be a $2^k$--periodic point under $f_k$, that is $x\in X_k$. Since 
\begin{itemize}
	\item $\restr{f_k}{I(u)}$ is linear and its slope is $1$ and
	\item $f_k^i(I(u)) = I(u \lp i)$
\end{itemize}
for each $u\in\Sigma^k$ and $i \geq 0$, it follows that $\restr{f_k^i}{I(u)}$ is an isometry. Hence for every $x\in I(0^k \lp n)$ with $0\leq n < 2^k$, $1\leq\ell\leq\infty$ and $0\leq i, j < \infty$
\begin{equation}
\label{eq:rovnost-miery}
\dis_{\ell}(f_k^i(x), f_k^j(x)) = \dis_{\ell}(f_k^{i + n}(0), f_k^{j + n}(0))
\end{equation}
Obviously, $\dis_\ell(f_k^i(x), f_k^j(x)\leq \ve$ if and only if $\dis_\ell(f_k^{i + n}(0), f_k^{j + n}(0)) \leq \ve$ for every $\ve > 0$. 

Let $\mu_{k, x}$ be the uniform ergodic measure on $\Orb_{f_k}(x)$, consequently $\mu_{k, x}$ is positive on the $f_k$--orbit of $x$. The measure $\mu_k$ is uniform and positive on $\Orb_{f_k}(0)$. By~(\ref{eq:rovnost-miery}),
\begin{align}
\label{eq:per_pts}
\mu_k\times\mu_k\{(x, y)\in[0, 1]^2\ |\ \dis_\ell(x, y)\leq \ve\} = \mu_{k, z}\times\mu_{k, z}\{(x, y)\in[0, 1]^2\ |\ \dis_{\ell}(x, y)\leq\ve\}.
\end{align}

\begin{thm}
\label{thm:ostatok_body}
For all $x\in [0, 1]$ and $1\leq\ell\leq\infty$, the limits of $(\tC{\ell}{f_{\alpha}}(x, i, \ve))_i$ and $(\tDET{\ell}{f_{\alpha}}(x, i, \ve))_i$ exist. Moreover, 
\begin{itemize}
	\item $\tc{\ell}{f}(x, \ve) = \tc{\ell}{f}(\ve)$ and $\tdet{\ell}{f}(x, \ve) = \tdet{\ell}{f}(\ve)$ for all $x\notin \eper(f)$ and 
	\item $\tc{\ell}{f}(x, \ve) = \tc{\ell}{f_{k}}(\ve)$ and $\tdet{\ell}{f}(x, \ve) = \tdet{\ell}{f_k}(\ve)$ for $x\in\eper(f)$ of period $2^k$ where $k\geq 0$.
\end{itemize}
\end{thm}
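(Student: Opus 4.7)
The plan is to split according to whether $x\in\eper(f)$, and in each case reduce to a correlation sum of some $f_k$ at a point that is $2^k$-periodic under $f_k$, where Lemma~\ref{lema:periodic} applies directly. Equation~(\ref{eq:per_pts}) then allows me to replace $\mu_{k,x}$ by $\mu_{\alpha,k}$, turning the periodic-orbit integral into $\tc{\ell}{f_k}(\ve)$.

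For $x\in\eper(f)$ of period $2^k$, I would let $N$ be minimal with $y:=f^N(x)$ periodic. By Corollary~\ref{cor:per_vyskyt}, $y\in X_k\setminus X_{k+1}$, and Lemma~\ref{lema:per_del} yields $f^i(y)=f_k^i(y)$ for every $i\ge 0$. The first $N$ iterates of $x$ contribute at most $O(Nn)$ pairs to $n^2\tC{\ell}{f}(x,n,\ve)$, so dividing by $n^2$ absorbs the transient and gives
\[
\lim_{n\to\infty}\tC{\ell}{f}(x,n,\ve)=\lim_{n\to\infty}\tC{\ell}{f_k}(y,n,\ve).
\]
Applying Lemma~\ref{lema:periodic} to $(f_k,y)$ evaluates the right-hand side as $\mu_{k,y}\times\mu_{k,y}\{(u,v):\dis_\ell(u,v)\le\ve\}$, and~(\ref{eq:per_pts}) identifies this with $\tc{\ell}{f_k}(\ve)$.

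For $x\notin\eper(f)$, the remark following Lemma~\ref{lema:trajektorie_dvoch_bodov_rozsah} tells me the orbit enters $X$ after finitely many iterations, so the same transient argument lets me assume $x\in X$. Then $x\in X_k$ for every $k$, and since $f_k^{2^k}|_{X_k}=\id$ (noted in the text preceding the theorem), $x$ is $2^k$-periodic under each $f_k$. Lemma~\ref{lema:periodic} together with~(\ref{eq:per_pts}) gives $\lim_{n\to\infty}\tC{\ell}{f_k}(x,n,\ve')=\tc{\ell}{f_k}(\ve')$ for every $k$ and every $\ve'>0$. Applying the sandwich~(\ref{eq:dvaja_policajti}) with $k'=k$ and passing to the limit in $n$, I obtain
\[
\tc{\ell}{f_k}(\ve-2\alpha^k)\le\liminf_{n\to\infty}\tC{\ell}{f}(x,n,\ve)\le\limsup_{n\to\infty}\tC{\ell}{f}(x,n,\ve)\le\tc{\ell}{f_k}(\ve+2\alpha^k).
\]
Letting $k\to\infty$, Lemma~\ref{lema:f_k_prechod_f} with $g(k)=2\alpha^k$ (and $h_{g(k)}\le 3$) shrinks the length of this bracket to zero, while Theorem~\ref{thm:prechod} pins down the common value as $\tc{\ell}{f}(\ve)$. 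Hence the limit exists and equals $\tc{\ell}{f}(\ve)$.

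The determinism statements follow mechanically from~(\ref{eq:det_nekonecno}): convergence of each $\tC{j}{f}(x,n,\ve)$, together with the strictly positive lower bound on $\tc{1}{f_k}(\ve)$ (respectively $\tc{1}{f}(\ve)$) noted at the end of the proof of Corollary~\ref{cor:rozdiel_ck_c}, permits passing to the limit in the ratio. I expect the main obstacle to be Case~2: the approximation machinery (Lemma~\ref{lema:c_l_limita}, Lemma~\ref{lema:f_k_prechod_f}) was written around the basepoint $x=0$, and one must recognize that its only genuine inputs are $f_k$-periodicity of the orbit and the coincidence~(\ref{eq:per_pts}) of the two correlation integrals, both of which remain available for every $x\in X$. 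Once that observation is isolated, the sandwich together with Lemma~\ref{lema:f_k_prechod_f} closes the argument.
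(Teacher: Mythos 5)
Your Case 1 ($x\in\eper(f)$) and your limiting machinery (the sandwich~(\ref{eq:dvaja_policajti}), Lemma~\ref{lema:periodic} plus~(\ref{eq:per_pts}), then Lemma~\ref{lema:f_k_prechod_f} and Theorem~\ref{thm:prechod}) match the paper's proof. The problem is the reduction at the start of Case 2. The remark after Lemma~\ref{lema:trajektorie_dvoch_bodov_rozsah} does \emph{not} say that the orbit of a non--eventually--periodic point enters $X=\bigcap_k X_k$ after finitely many iterations; it says that for each $k$ the orbit enters $X_k$ after finitely many iterations, and that entry time genuinely depends on $k$. Indeed, the set of points whose orbit ever meets $X$ is $\bigcup_{N\geq 0}f^{-N}(X)$, a Lebesgue--null set (as is the countable set $\eper(f)$), so for Lebesgue--almost every $x$ you are in Case 2 and yet the orbit of $x$ never reaches $X$. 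For such $x$ the statement ``the transient argument lets me assume $x\in X$'' is false, and with it the claim that $x$ is $2^k$--periodic under every $f_k$, which is what you need to invoke Lemma~\ref{lema:periodic} and the $k'=k$ sandwich. You also misidentify the obstacle: it is not that the machinery was ``written around $x=0$'' (it transfers to any point of $X_k$ via~(\ref{eq:per_pts})), but that your basepoint typically is not in $X_k$ for large $k$ without a $k$--dependent shift along the orbit.

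The gap is repairable along your own lines: for each fixed $k$ choose $N_k$ with $y_k:=f^{N_k}(x)\in X_k$, discard the ($k$--dependent) transient of length $N_k$, apply~(\ref{eq:dvaja_policajti}) at $y_k$, and use that $y_k$ is $2^k$--periodic under $f_k$ together with~(\ref{eq:per_pts}) to evaluate the two bounding limits as $\tc{\ell}{f_k}(\ve\mp 2\alpha^k)$; then let $k\to\infty$ exactly as you do. The paper takes a different route that avoids moving the basepoint at all: it keeps $x$ fixed, uses that $x$ is eventually periodic under $f_k$ (so $\lim_n\tC{\ell}{f_k}(x,n,\ve')=\tc{\ell}{f_k}(\ve')$ by~(\ref{eq:per_pts})), and then compares the $f$--orbit and the $f_k$--orbit of the \emph{same} point $x$: after the first entry of the orbit into $I(1^{k-1})$ the two iterates remain in a common interval $f^j(I(1^{k-1}))$ of length $\alpha^{k-1}$, giving a sandwich with perturbation $2\alpha^{k-1}$ instead of $2\alpha^k$. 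Either repair closes the argument; as written, yours does not.
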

\begin{proof}
Let $x\in\eper(f)$. Since the beginnings of trajectories are not important for the limits, we can assume that $x$ is periodic. From Lemma~\ref{lema:per_del}, $f_k^i(x) = f^i(x)$ for $i\in\ZZZ$ and from Corollary~\ref{cor:per_vyskyt}, $x\in X_k$. Then the latter statement follows from~(\ref{eq:per_pts}).

Now let $x\notin\eper(f)$. All such points are eventually periodic for every $f_k$. Since all of measures $\mu_{k, z}$ on these points are zero, we can ignore the beginning of the orbit. From~(\ref{eq:per_pts}),
\begin{align}
\label{eq:every_x}
\lim_{n\to\infty}\tC{\ell}{f_k}(x, n, \ve) = \lim_{n\to\infty}\tC{\ell}{f_k}(0, n, \ve) = \tc{\ell}{f_k}(\ve).
\end{align}

The map $f_k$ is the same as $f$ on $[0, 1 - \alpha^{k - 1} + \alpha^k]$. The functions are different on $(1 - \alpha^{k - 1} + \alpha^k, 1]$. Consider $I(1^{k - 1}) \supset (1 - \alpha^{k - 1} + \alpha^k, 1]$ which is periodic for all of $f, f_{k - 1}$ and $f_k$.

If $x = 1$, then $f(x) = 0$ and $\lim_{n\to\infty}\tC{\ell}{f}(1, n, \ve) = \lim_{n\to\infty}\tC{\ell}{f}(0, n, \ve) = \tc{\ell}{f}(\ve)$. Therefore we can assume that $x\neq 1$. Let $k > 0$ be large enough that there exists $h\geq 0$ such that the iterations $f^i(x)\notin I(1^{k - 1})$ for $0\leq i\leq h$. Suppose that $h$ is maximal such integer, i.e.~$f^{h + 1}(x)\in I(1^{k - 1})$. Then $f^i(x) = f_k^i(x)$ for all $0\leq i \leq h$, but $f^j(x)$ need not be the same as $f_k^j(x)$ for $j > h$. For every $j \geq 0$, the points $f^{h + j + 1}(x), f_k^{h + j + 1}(x)\in f^j(I(1^{k - 1})) = f_k^j(I(1^{k - 1}))$. From that, there is $u\in\Sigma^{k - 1}$ such that $f^i(x), f_k^i(x)\in f^i(I(u))$ and $\dis(f^i(x), f_k^i(x))\leq\alpha^{k - 1}$ for every $i > h$.

Let $\dis(f^i(x), f^j(x)) \leq \ve$. Therefore
\begin{align*}
\dis(f_k^i(x), f_k^j(x))\leq \dis(f_k^i(x), f^i(x)) + \dis(f^i(x), f^j(x)) + \dis(f^j(x), f_k^j(x))\leq \ve + 2\cdot\alpha^{k - 1}.
\end{align*}

Similarly, if $\dis(f_k^i(x), f_k^j(x))\leq \ve - 2\cdot \alpha^{k - 1}$, then $\dis(f^i(x), f^j(x))\leq \ve$. From~(\ref{eq:dvaja_policajti}),
\begin{align*}
\tC{\ell}{f_k}(x, n, \ve - 2\cdot\alpha^{k - 1}) \leq \tC{\ell}{f}(x, n, \ve)\leq \tC{\ell}{f_k}(x, n, \ve + 2\cdot \alpha^{k - 1})
\end{align*}
and from~(\ref{eq:every_x}),
\begin{align*}
&\lim_{n\to\infty}\tC{\ell}{f_k}(x, n, \ve - 2\cdot\alpha^{k - 1}) = \tc{\ell}{f_k}(\ve - 2\cdot\alpha^{k - 1})\qquad\text{and}\qquad\\
&\lim_{n\to\infty}\tC{\ell}{f_k}(x, n, \ve + 2\cdot\alpha^{k - 1}) = \tc{\ell}{f_k}(\ve + 2\cdot\alpha^{k - 1}).
\end{align*}
Thus $\tc{\ell}{f}(x, \ve)\subset [\tc{\ell}{f_k}(\ve - 2\cdot\alpha^{k - 1}), \tc{\ell}{f_k}(\ve + 2\cdot\alpha^{k - 1})]$ for every large enough $k$.

We finish the proof by using Lemma~\ref{lema:f_k_prechod_f} where $g(k) = 2\cdot\alpha^{k - 1}$. In an interval $J\subset [0, 1]$ of length $\alpha^{k - 1}$, there are at most three words $u, u\pp 1, u \pp 2\in\Sigma^k$ satisfying $J\cap I(v)\neq\emptyset$ where $v\in\{u, u\pp 1, u\pp 2\}$. Then $h_{g(k)} = 6$ for each $k$ and clearly $6/2^k\to 0$ as $k\to\infty$. Using Lemma~\ref{lema:f_k_prechod_f} and Lemma~\ref{lema:c_funkcia}, we then actually have
\begin{align*}
\bigcap_{k > 0}\left[\tc{\ell}{f_k}(\ve - 2\cdot\alpha^{k - 1}), \tc{\ell}{f_k}(\ve + 2\cdot\alpha^{k - 1})\right] = \left\{\tc{\ell}{f}(\ve)\right\}
\end{align*}
which finishes the proof.
\end{proof}

The correlation integrals and asymptotic determinisms change continuously not only with respect to $\ve$ but for a fixed $\ve$ these functions are continuous at every $\alpha\in(0, 1/2)$ as well.

\begin{lema}
\label{lema:int_conti}
For every $1\leq\ell\leq\infty$, the correlation integral $\tc{\ell}{f_{\alpha}}(\ve)$ and asymptotic determinism $\tdet{\ell}{f_{\alpha}}(\ve)$ change continuously with respect to $\alpha$.
\end{lema}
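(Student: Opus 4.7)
The plan is to establish continuity of $\tc{\ell}{f_{\alpha}}(\ve)$ in $\alpha$ by a triangle-inequality sandwich through the finite approximations $f_{\alpha,k}$, and then deduce continuity of $\tdet{\ell}{f_{\alpha}}(\ve)$ from the defining formula~(\ref{def:as-det}). Fix $\alpha_0\in(0,1/2)$ and $\ve\in(0,1]$. First I would restrict $\alpha$ to a compact neighbourhood $I\subset(0,1/2)$ of $\alpha_0$ and pick $k_0\geq 1$ with $\ve>\alpha^{k_0}$ for every $\alpha\in I$. Given $\delta>0$, Theorem~\ref{thm:prechod} supplies $\eta>0$ with $\tc{\ell}{f_{\alpha_0}}(\ve+\eta)-\tc{\ell}{f_{\alpha_0}}(\ve-\eta)<\delta$, and Corollary~\ref{cor:rozdiel_ck_c} then lets me choose $k\geq k_0$ large enough that $|\tc{\ell}{f_{\alpha}}(\ve')-\tc{\ell}{f_{\alpha,k}}(\ve')|<\delta$ for every $\alpha\in I$ and every $\ve'\in[\ve-\eta,\ve+\eta]$.

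The core of the argument is the comparison of $\tc{\ell}{f_{\alpha,k}}(\ve)$ with $\tc{\ell}{f_{\alpha_0,k}}(\ve)$ for this fixed $k$. Each correlation integral equals $2^{-2k}\cdot\card\{(u,v)\in\Sigma^k\times\Sigma^k:\dis_{\ell}^{\alpha}(u,v)\leq\ve\}$. Since $f_{\alpha,k}$ is $2^k$-periodic on $X_k$, the quantity $\dis_{\ell}^{\alpha}(u,v)$ is the maximum of at most $\min(\ell,2^k)$ expressions of the form $(1-\alpha)|\sum_{j=1}^{k}c_j\alpha^{j-1}|$ with $c_j\in\{-1,0,1\}$, so it is a continuous (piecewise polynomial) function of $\alpha$ for each fixed pair $(u,v)$. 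The number of pairs with $\dis_{\ell}^{\alpha_0}(u,v)\in(\ve-\eta,\ve+\eta]$ equals $2^{2k}\bigl[\tc{\ell}{f_{\alpha_0,k}}(\ve+\eta)-\tc{\ell}{f_{\alpha_0,k}}(\ve-\eta)\bigr]\leq 2^{2k}\cdot 3\delta$, by combining the two estimates of the previous paragraph at $\ve\pm\eta$. Next, using uniform continuity on $I$ of the finite family $\{\alpha\mapsto\dis_{\ell}^{\alpha}(u,v):(u,v)\in\Sigma^k\times\Sigma^k\}$, I would pick $\alpha$ so close to $\alpha_0$ that $|\dis_{\ell}^{\alpha}(u,v)-\dis_{\ell}^{\alpha_0}(u,v)|<\eta$ for every pair; then any entry of $D_{k,\alpha,\ell}(\ve)$ that differs from the corresponding entry of $D_{k,\alpha_0,\ell}(\ve)$ necessarily comes from a pair in this borderline set. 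Hence $|\tc{\ell}{f_{\alpha,k}}(\ve)-\tc{\ell}{f_{\alpha_0,k}}(\ve)|\leq 3\delta$, and a final triangle inequality with the approximation bounds yields $|\tc{\ell}{f_{\alpha}}(\ve)-\tc{\ell}{f_{\alpha_0}}(\ve)|\leq 5\delta$, giving continuity of the correlation integral in $\alpha$.

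For the asymptotic determinism, the end of the proof of Corollary~\ref{cor:rozdiel_ck_c} gives $\tc{1}{f_{\alpha,k}}(\ve)\geq 2^{-h-1}$ whenever $\alpha^{h+1}<\ve\leq\alpha^{h}$, and since the approximation error is $O(2^{-k})$, the quantity $\tc{1}{f_{\alpha}}(\ve)$ remains uniformly bounded away from zero on a small neighbourhood of $\alpha_0$ (on which $h$ is constant or jumps by at most one). The formula~(\ref{def:as-det}) then expresses $\tdet{\ell}{f_{\alpha}}(\ve)$ as a rational combination of the continuous functions $\tc{1}{f_{\alpha}}(\ve)$, $\tc{\ell}{f_{\alpha}}(\ve)$, and $\tc{\ell+1}{f_{\alpha}}(\ve)$ with denominator bounded away from zero, so continuity follows. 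I expect the hard part to be the middle step: the distance matrix $D_{k,\alpha,\ell}(\ve)$ is a discrete object whose entries could in principle jump discontinuously as $\alpha$ varies, but the previously established continuity of $\tc{\ell}{f_{\alpha_0}}$ in $\ve$ controls the density of pairs whose $f_{\alpha_0,k}$-distance lies near $\ve$, and uniform continuity of the finitely many polynomial-like distance functions ensures that only these borderline pairs can actually flip for $\alpha$ sufficiently close to $\alpha_0$.
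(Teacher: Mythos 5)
Your proposal is correct and follows essentially the same route as the paper: a triangle-inequality sandwich through the finite approximations $f_{\alpha,k}$, with the outer terms controlled by Corollary~\ref{cor:rozdiel_ck_c} and the $\ve$--continuity estimates, and the middle term controlled by the continuity in $\alpha$ of the finitely many piecewise-polynomial distance functions $\alpha\mapsto\dis_{\ell}^{\alpha}(u,v)$. The only (harmless) variation is in that middle step --- the paper perturbs $\ve$ to a value $\ve'$ attained by no pair and shows the distance matrix is then locally \emph{constant} in $\alpha$, whereas you let entries flip but bound the number of flipping pairs by the measure of the borderline annulus $(\ve-\eta,\ve+\eta]$; you are in fact slightly more explicit than the paper in justifying why the determinism inherits continuity (denominator bounded below by $2^{-h-1}$).
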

\begin{proof}
Let $1\leq\ell\leq\infty$ and $\varepsilon > 0$. From the proof of Lemma~\ref{lema:f_k_prechod_f}, there is $k$ such that $|\tc{\ell}{f_{\alpha, k}}(\ve) - \tc{\ell}{f_{\alpha, k}}(\ve')| < \varepsilon/3$ for every $\alpha\in (0, 1/2)$ and $\ve'\in (\ve - \alpha^k, \ve + \alpha^k)$. Then from Corollary~\ref{cor:rozdiel_ck_c}, $|\tc{\ell}{f_{\alpha}}(\ve) - \tc{\ell}{f_{\alpha, k}}(\ve)| < \varepsilon/3$ for all $\ve > 0$ and $0 < \alpha < 1/2$.

Fix $\alpha$ and $\ve$. If there are $u, v\in\Sigma^k$ such that $\dis^{\alpha}(u, v) = \ve$, choose $\ve < \ve' < \ve + (\alpha/2)^k$ to satisfy $D_{k, \alpha}(\ve) = D_{k, \alpha}(\ve')$ and $\dis^{\alpha}(u, v)\neq \ve'$ for every pair $u, v\in\Sigma^k$. Hence $\tc{\ell}{f_{\alpha, k}}(\ve) = \tc{\ell}{f_{\alpha, k}}(\ve')$.

By~(\ref{eq:gap_seq_dis}), for every $u \neq v\in\Sigma^k$ there are $a, b\in\NNN\cup \{0\}$ such that $\dis^{\alpha}(u, v) = \alpha^k\cdot a + \alpha^{k - 1}\cdot(1 - 2\alpha)\cdot b$ for every $\alpha$. The function $g_{u, v}:\RRR\to\RRR$, where $g_{u, v}(x) = x^k\cdot a + x^{k - 1}\cdot(1 - 2x)\cdot b$, is continuous, thus there is some $\delta_{u, v} > 0$ such that for every $\alpha'\in(\alpha - \delta_{u, v}, \alpha + \delta_{u, v})$ the product $(g_{u, v}(\alpha) - \ve')\cdot(g_{u, v}(\alpha') - \ve') > 0$.
Set $\delta = \min\{\min\{\delta_{u, v}\ |\ u\neq v\in\Sigma^k\}, \alpha/2\} > 0$. Then, $\tc{\ell}{f_{\alpha, k}}(\ve) = \tc{\ell}{f_{\alpha, k}}(\ve') = \tc{\ell}{f_{\alpha', k}}(\ve')$ for every $\alpha'\in (\alpha - \delta, \alpha + \delta)$. The radius $\ve' < \ve + (\alpha/2)^k < \ve + (\alpha')^k$ and therefore $\tc{\ell}{f_{\alpha', k}}(\ve') - \tc{\ell}{f_{\alpha', k}}(\ve) = \tc{\ell}{f_{\alpha, k}}(\ve) - \tc{\ell}{f_{\alpha', k}}(\ve) < \varepsilon/3$.

Hence for every $\varepsilon > 0$, $\alpha\in (0, 1/2)$ and $\ve\in (0, 1)$, there are $k > 0$, $\delta > 0$ such that for every $\alpha'\in (\alpha - \delta, \alpha + \delta)$,
\begin{align*}
\left|\tc{\ell}{f_{\alpha'}}(\ve) - \tc{\ell}{f_{\alpha}}(\ve)\right| &\leq \left|\tc{\ell}{f_{\alpha'}}(\ve) - \tc{\ell}{f_{\alpha', k}}(\ve)\right| + \left|\tc{\ell}{f_{\alpha', k}}(\ve) - \tc{\ell}{f_{\alpha, k}}(\ve)\right| + \left|\tc{\ell}{f_{\alpha, k}}(\ve) - \tc{\ell}{f_{\alpha}}(\ve)\right| < \\
& < \frac{\varepsilon}{3} + \frac{\varepsilon}{3} + \frac{\varepsilon}{3} = \varepsilon.
\end{align*}
Thus the function $\tc{\ell}{f_{\alpha}}(\ve)$ is for fixed $\ell, \ve$ continuous in every $\alpha \in (0, 1/2)$. Therefore, from the definition of asymptotic determinism, $\tdet{\ell}{f_{\alpha}}(\ve)$ is continuous as well.
\end{proof}

Even though all $\tc{\ell}{f_{\cdot}}(\ve)$ are continuous, $\tc{\ell}{f_{\cdot, k}}(\ve)$ are not continuous in general. Let $k > 0$, $\alpha\in(0, 1/2)$ and $\ve\in(0, 1)$ be such that for some $u, v\in\Sigma^k$ the distance $\dis^\alpha(u, v) = \ve$. Now, let $\varepsilon < 1/2^{2k}$. If $\sum_{1\leq i, j \leq 2^k}D_{k, \alpha}[i, j](\ve)\neq \sum_{1\leq i, j\leq 2^k}D_{k, \alpha'}(\ve)[i, j]$, then $|\tc{1}{f_{\alpha, k}}(\ve) - \tc{1}{f_{\alpha', k}}(\ve)| > \varepsilon$. The above-mentioned function $g_{u, v}$ is continuous, nowhere constant and increasing for $x \geq 0$. Hence if $\dis^{\alpha}(u, v) = \ve$, then for every sufficiently small neighbourhood of $\alpha$ there is some $\alpha'$ satisfying $\sum_{1\leq i, j \leq 2^k}D_{k, \alpha}[i, j](\ve)\neq \sum_{1\leq i, j\leq 2^k}D_{k, \alpha'}(\ve)[i, j]$. Let $\ell\leq\infty$, and $\ve > 0$, next, let $u, v\in\Sigma^k$ be such that $\dis_{\ell}^{\alpha}(u, v) = \ve$. The map $g_{u, v, \ell} = \max\{g_{u\lp i, v\lp i}\ |\ 0\leq i < \ell\}$ is continuous. Since $g_{u, v}$ is nowhere constant for all $u, v\in\Sigma^k$, each $g_{u, v, \ell}$ is nowhere constant and increasing for $x\geq 0$. Then the proof for discontinuity of $\tc{\ell}{f_{\cdot, k}}(\ve)$ follows the case $\ell = 1$ where we use $D_{k, \alpha, \ell}$ instead of $D_{k, \alpha}$.
\newline

The next lemma is very helpful for computations of $\lim\inf_{\ve\to 0^+}\tdet{\ell}{f}(\ve)$ and $\lim\sup_{\ve\to 0^+}\tdet{\ell}{f}(\ve)$ which is the main purpose of this paper.

\begin{lema}
\label{lema:det_epsilon}
For each $\ve \in (0, 1)$,
\begin{align*}
\tdet{\infty}{f_{\alpha}}(\alpha\cdot\ve)\leq \tdet{\infty}{f_{\alpha}}(\ve).
\end{align*}
Moreover, for $\ve \leq (1 - 2\alpha)/\alpha$, the equality holds.
\end{lema}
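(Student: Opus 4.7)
The plan is to decompose each correlation integral $\tc{\ell}{f_\alpha}(\alpha\ve)$ into contributions from pairs lying in the same one of $I(0) = [0, \alpha]$, $I(1) = [1-\alpha, 1]$ versus pairs straddling them, and then to observe that the straddling pairs can never realise $\dis_\infty^{f_\alpha} \leq \alpha\ve$ for $\ve$ in the stated range. Combined with the width $1-2\alpha$ of the gap between $I(0)$ and $I(1)$, this yields both halves of the lemma.

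The first step is the scaling identity $\dis_\ell^{f_\alpha}(x, y) = \alpha\cdot\dis_\ell^{f_\alpha}(\hat x, \hat y)$ for $\ell \in \{1, \infty\}$ whenever $(x, y) \in I(c)\times I(c)$, where $\hat x$, $\hat y \in [0, 1]$ are the preimages under the affine map $t \mapsto c(1-\alpha) + \alpha t$. The case $\ell = 1$ is immediate from the formula for $\cores$; for $\ell = \infty$ I would use that no carry ever enters position $1$ in $(\Sigma^\infty, \lp)$, so $(c\hat u)\lp n$ and $(c\hat v)\lp n$ share their first bit for every $n$, the orbits $f_\alpha^n(x), f_\alpha^n(y)$ always lie in a common $I(c_n)$, and the inner iterate indices $m_n$ (namely $\lfloor n/2\rfloor$ if $c=0$ and $\lceil n/2\rceil$ if $c=1$) sweep out all of $\NNN$. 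Combined with $\mu_\alpha|_{I(c)}$ being the $\tfrac{1}{2}$-scaled pushforward of $\mu_\alpha$ under that same affine map, this yields
\begin{equation*}
\tc{\ell}{f_\alpha}(\alpha\ve) = \tfrac{1}{2}\tc{\ell}{f_\alpha}(\ve) + T_\ell(\alpha\ve),
\end{equation*}
where $T_\ell(\delta) = (\mu_\alpha\times\mu_\alpha)\bigl\{(x, y) \in (I(0)\times I(1))\cup(I(1)\times I(0)) : \dis_\ell^{f_\alpha}(x, y) \leq \delta\bigr\}$ collects the cross-straddling pairs.

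Second, I will show $T_\infty(\alpha\ve) = 0$ for every $\ve \in (0, 1)$. Applying Lemma~\ref{lema:najvacsia_vzdialenost_cyklus_intervalov} at truncation level $k$ to a cross pair produces an iterate $n_k$ sending the truncated pair to $(0^k, 11 w^{(k-2)})$; evaluating the same iterate on the untruncated pair gives
\begin{equation*}
\dis(f_\alpha^{n_k}x, f_\alpha^{n_k}y) = 1 - \alpha^2 + \alpha^2\cores(w^{(k-2)}) + O(\alpha^k) \geq 1 - \alpha^2 - \alpha^k,
\end{equation*}
so letting $k \to \infty$ yields $\dis_\infty^{f_\alpha}(x, y) \geq 1 - \alpha^2$. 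Because $\alpha < 1/2$ forces $\alpha(1+\alpha) < 1$, i.e., $\alpha < 1-\alpha^2$, we have $\alpha\ve < 1 - \alpha^2$ throughout $\ve \in (0, 1)$, and therefore $T_\infty(\alpha\ve) = 0$. Substituting into the decomposition,
\begin{equation*}
\tdet{\infty}{f_\alpha}(\alpha\ve) = \frac{\tfrac{1}{2}\tc{\infty}{f_\alpha}(\ve)}{\tfrac{1}{2}\tc{1}{f_\alpha}(\ve) + T_1(\alpha\ve)} \leq \frac{\tc{\infty}{f_\alpha}(\ve)}{\tc{1}{f_\alpha}(\ve)} = \tdet{\infty}{f_\alpha}(\ve),
\end{equation*}
with equality precisely when $T_1(\alpha\ve) = 0$. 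The gap between $I(0)$ and $I(1)$ having length $1-2\alpha$, every cross pair satisfies $\dis \geq 1-2\alpha$, so $T_1(\alpha\ve) = 0$ if and only if $\alpha\ve \leq 1-2\alpha$, equivalently $\ve \leq (1-2\alpha)/\alpha$.

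The step I expect to be most delicate is the cross-pair lower bound $\dis_\infty^{f_\alpha} \geq 1-\alpha^2$ on the full attractor $X_\alpha$, since Lemma~\ref{lema:najvacsia_vzdialenost_cyklus_intervalov} is stated only for finite-length words. The remedy sketched above---applying the finite-level iterate $n_k$ to the infinite pair and absorbing the mismatch between truncated and untruncated orbits into an $O(\alpha^k)$ error before passing to the limit---requires some care in tracking how carries beyond position $k$ propagate, but once it is in hand the remainder of the argument reduces to direct arithmetic in the self-similar decomposition.
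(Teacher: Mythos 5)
Your argument is correct. The scaling identity $\dis_\ell(\phi_c \hat x,\phi_c \hat y)=\alpha\,\dis_\ell(\hat x,\hat y)$ checks out for $\ell=\infty$ exactly as you describe (the first symbol of $(c\hat u)\lp n$ is $(c+n)\bmod 2$ for both words, and the inner indices sweep $\NNN$), the measure identity $\restr{\mu_\alpha}{I(c)}=\tfrac12\,\phi_{c*}\mu_\alpha$ follows from $\mu_\alpha(I(cu^{(k)}))=\tfrac12\mu_\alpha(I(u^{(k)}))$, and the cross-pair bound $\dis_\infty(x,y)\geq 1-\alpha^2>\alpha>\alpha\ve$ (via Lemma~\ref{lema:najvacsia_vzdialenost_cyklus_intervalov} with $h=0$ plus the $\alpha^k$ truncation error) is the right estimate --- note that the cruder bound $1-2\alpha$ from Lemma~\ref{lema:trajektorie_dvoch_bodov_rozsah} would not suffice when $\alpha>1/3$, so invoking the stronger lemma is genuinely necessary. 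The core idea is the same renormalisation the paper uses --- the paper's equation $\dis(0u,0v)=\alpha\dis(u,v)=\dis(1u,1v)$ is your scaling identity at the symbolic level, and its observation that prepending a symbol halves the correlation integrals up to cross terms is your decomposition $\tc{\ell}{f_\alpha}(\alpha\ve)=\tfrac12\tc{\ell}{f_\alpha}(\ve)+T_\ell(\alpha\ve)$ --- but the execution differs: the paper works entirely with the finite approximations $f_{\alpha,k}$ and their distance matrices, disposes of the $\infty$-cross-term by reducing to the first row of $D_k$ via Corollary~\ref{dosledok:pocet_blizke_orbity}, and then passes to the limit with Theorem~\ref{thm:prechod}, whereas you work directly with $\mu_\alpha$ on the attractor and kill $T_\infty$ by a pointwise lower bound on cross pairs. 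Your route avoids the limiting machinery entirely and makes the self-similar structure explicit; it also yields, as a by-product, the converse statement that equality fails whenever $\ve>(1-2\alpha)/\alpha$ (since then $T_1(\alpha\ve)>0$ because $[\alpha-\eta,\alpha]\cap X$ and $[1-\alpha,1-\alpha+\eta]\cap X$ carry positive measure), which the paper does not state. The only cosmetic caveat is that Lemma~\ref{lema:najvacsia_vzdialenost_cyklus_intervalov} is formally stated for $h,k>0$, so you should remark that its proof goes through verbatim for $h=0$ (the paper itself uses it that way elsewhere).
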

\begin{proof}
Let $u\in\Sigma^k$ be given. By an easy computation, $\gamma(0u) = \gamma(u)$ and $\gamma(1 u) = \gamma(u) + 2^k$. From~(\ref{eq:gap_seq_dis}), for every $u, v\in\Sigma^k$,
\begin{align*}
\dis(0u, 0v) &= (\gamma(v) - \gamma(u))\cdot\alpha^{k + 1} + \alpha^k\cdot(1 - 2\alpha)\cdot A(\gamma(u), \gamma(v) - \gamma(u)) = \alpha\cdot\rho(u, v)\qquad\text{and}\\
\dis(1u, 1v) &= (\gamma(v) - \gamma(u))\cdot\alpha^{k + 1} + \alpha^k\cdot(1 - 2\alpha)\cdot A(\gamma(u) + 2^k, \gamma(v) - \gamma(u)).
\end{align*}
By~(\ref{lema:a_s}), we get
\begin{align*}
A(\gamma(u), \gamma(v) - \gamma(u)) = A(\gamma(u) + 2^k, \gamma(v) - \gamma(u)).
\end{align*}
Therefore
\begin{equation}
\label{eq:new_dist}
\dis(u, v) = \alpha\dis(0u, 0v) = \alpha\dis(1u, 1v)
\end{equation}
and $\tc{1}{f_{\alpha, k + 1}}(\alpha\ve) \geq 1/2\cdot \tc{1}{f_{\alpha, k}}(\ve)$.

Let $D_{k + 1}(\alpha\cdot\ve)[2^k, 2^k + 1] = 0$, i.e.~$\dis(0 1^k, 1 0^k) = \alpha^{k + 1} + (1 + 2\alpha) > \alpha\cdot\ve$. Then, from the pattern $A_0$ and from~(\ref{eq:new_dist}), the distance $\dis(w u, w' v)$ with $w, w'\in\{0, 1\}$, is for $\alpha\cdot\ve \leq 1 - 2\alpha$ less than or equal to $\alpha\cdot\ve$ if and only if $w = w'$ and $\dis(u, v)\leq \ve$.

By Lemma~\ref{dosledok:pocet_blizke_orbity}, for computing $\tc{\infty}{f_{\alpha, k}}(\ve)$ it is sufficient to study only the first row in the distance matrix $D_k(\ve)$. If $0^h 1 1 u$ where $u\in\Sigma^{k - h - 2}$ is the word from the lemma for $k$ and $\ve$, then by~(\ref{eq:new_dist}), $0^{h + 1} 1 1 u$ is the word for $k + 1$ and $\alpha\cdot\ve$. Thus $\tc{\infty}{f_{\alpha, k + 1}}(\alpha\ve) = 1/2\cdot \tc{\infty}{f_{\alpha, k}}(\ve)$ for every $\ve > 0$ and $k\geq 1$.

Therefore
\begin{align*}
\tdet{\infty}{f_{\alpha, k + 1}}(\alpha\ve) = \frac{\tc{\infty}{f_{\alpha, k + 1}}(\alpha\ve)}{\tc{1}{f_{\alpha, k + 1}}(\alpha\ve)} \leq \frac{\tc{\infty}{f_{\alpha, k}}(\ve)}{\tc{1}{f_{\alpha, k}}(\ve)} = \tdet{\infty}{f_{\alpha, k}}(\ve),
\end{align*}
and the equality holds if $\alpha\ve \leq 1 - 2\alpha$. Applying Theorem~\ref{thm:prechod} we complete the proof.
\end{proof}

From Lemma~\ref{lema:det_epsilon}, if $\alpha \leq 1/3$, it is sufficient to compute correlation integrals only for $\ve\in (\alpha, 1]$.

\begin{figure}[!ht]
  \centering
  \includegraphics[width=1\textwidth]{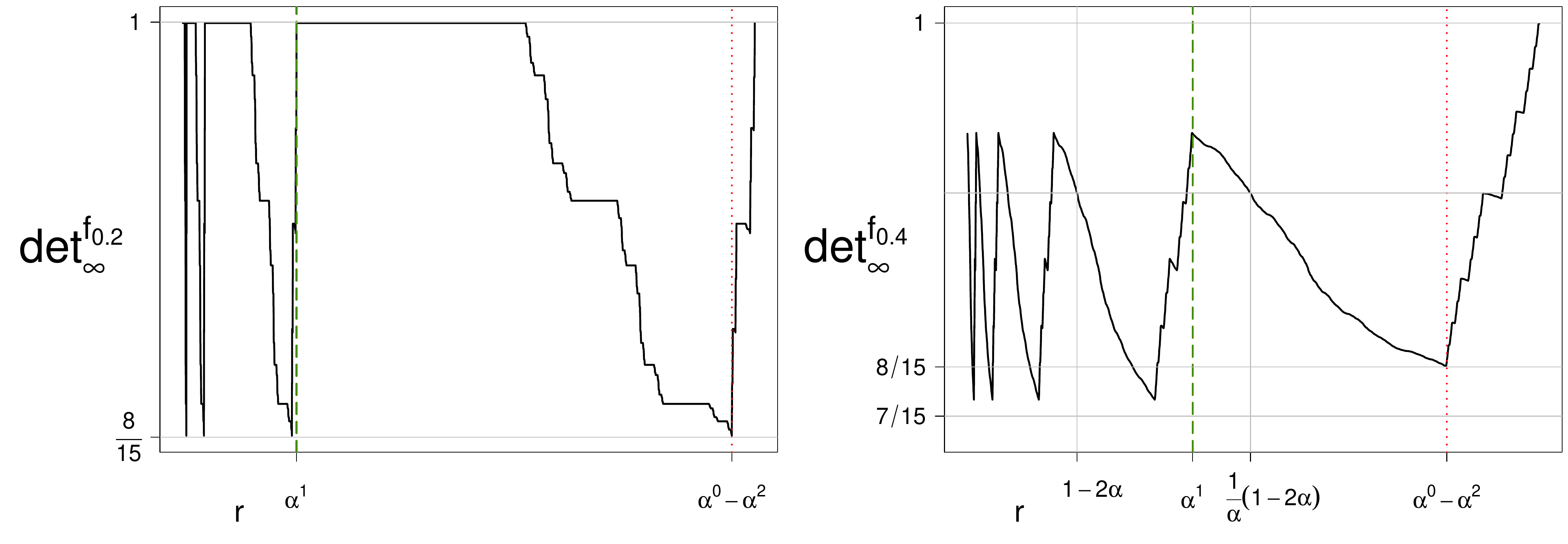}
  \caption[Determinism as a function of $\ve$]{Examples of determinism as a function of $\ve$ for $\alpha = 0.2$ and $\alpha = 0.4$.}
  \label{fig:determinism}
\end{figure}

Define the functions $\utdet, \otdet:(0, 1/2)\to (1/3, 1]$ by $\utdet(\alpha) = \lim\inf_{\ve\to 0^{+}}\tdet{\infty}{f_{\alpha}}(\ve)$ and $\otdet(\alpha) = \lim\sup_{\ve\to 0^{+}}\tdet{\infty}{f_{\alpha}}(\ve)$. The next theorem gives some bounds for these.

\begin{thm}
\label{thm:determinizmus_ohranicenie_tretina}
The local minima of infinite determinism are located at points $\alpha^{h} - \alpha^{h + 2}$ where $h\geq 0$. Then $\utdet(\alpha)\in [1/3, 8/15]$ for all $\alpha$. The maxima $\otdet(\alpha) = 1$ for $\alpha \leq 1/3$ and $\otdet(\alpha) < 1$ for $\alpha > 1/3$. If $\alpha\leq 1/3$, the extremes are $8/15$ and $1$ and maxima are achieved at $\ve = \alpha^h$.
\end{thm}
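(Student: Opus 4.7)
The plan is to exploit the scaling symmetry from Lemma~\ref{lema:det_epsilon} to confine attention to a single fundamental interval of $\ve$, then to use the gap sequence formula~(\ref{eq:gap_seq_dis}) together with the patterns from Lemma~\ref{lema:postupnost_gulky} to enumerate the finitely many candidate positions for extrema of $\tdet{\infty}{f_\alpha}$ there, and finally to compute the values of $\tc{\infty}{f_\alpha}$ and $\tc{1}{f_\alpha}$ at each candidate.

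First, I would reduce the problem to a fundamental interval. Lemma~\ref{lema:det_epsilon} gives $\tdet{\infty}{f_\alpha}(\alpha \ve) = \tdet{\infty}{f_\alpha}(\ve)$ whenever $\alpha \ve \leq 1 - 2\alpha$, and for $\alpha \leq 1/3$ one has $(1-2\alpha)/\alpha \geq 1$, so this scaling is an exact symmetry on all of $(0,1]$; for larger $\alpha$ only finitely many scales near $\ve = 1$ need separate handling. Consequently, the asymptotic behaviour as $\ve \to 0^+$ is entirely determined by the values of the determinism on one fundamental period $(\alpha^{h_0+1}, \alpha^{h_0}]$, and the local extrema on $(0,1]$ are the $\alpha$-scaled copies of the extrema found there.

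Second, I would locate the critical thresholds in the fundamental interval. A direct computation from~(\ref{eq:gap_seq_dis}) gives $\dis(0^\infty, 0^h 1 1 0^\infty) = \alpha^h - \alpha^{h+2}$, which is precisely the distance controlling $\tc{\infty}{f_\alpha}$ via the words appearing in Corollary~\ref{dosledok:pocet_blizke_orbity} and Lemma~\ref{lema:najvacsia_vzdialenost_cyklus_intervalov}. The companion value $\ve = \alpha^h = \dis(0^\infty, 0^h 1^\infty)$ marks the natural boundary threshold for $\tc{1}{f_\alpha}$. Between two consecutive such thresholds, both $\tc{\infty}{f_\alpha}$ and $\tc{1}{f_\alpha}$ are constant by Theorem~\ref{thm:prechod} combined with Lemma~\ref{lema:hranice_rr}, so local extrema of $\tdet{\infty}{f_\alpha}$ can only occur at these threshold values, and there are only finitely many to check on the fundamental interval.

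Third, I would carry out the explicit evaluation. At $\ve = \alpha^h$ with $\alpha \leq 1/3$, a direct check using the six patterns from Lemma~\ref{lema:postupnost_gulky} shows that every $\dis$-close pair of cylinders is also $\dis_\infty$-close, so $\tdet{\infty}{f_\alpha}(\alpha^h) = 1$ and hence $\otdet(\alpha) = 1$. Just below $\ve = \alpha^h - \alpha^{h+2}$, the word $0^h 1 1 0^\infty$ is expelled from the $\dis_\infty$-close set by Lemma~\ref{lema:najvacsia_vzdialenost_cyklus_intervalov} while remaining $\dis$-close together with its pattern-propagated images; tracking these via a geometric summation across the scaling hierarchy yields the bounds $\utdet(\alpha) \in [1/3, 8/15]$. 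For $\alpha > 1/3$, the strict inequality $\otdet(\alpha) < 1$ follows more softly: Lemma~\ref{lema:existence_pair} produces, at every sufficiently small $\ve$, a pair $(u, v)$ with $\dis(u, v) \leq \ve < \dis_\infty(u, v)$, and Lemma~\ref{lema:pocet_daleke_bodky} propagates this to a positive density of such pairs at every longer length, forcing $\tc{\infty}{f_\alpha}/\tc{1}{f_\alpha}$ uniformly below $1$.

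The main obstacle I anticipate is the combinatorial bookkeeping in the third step: verifying that the local minimum indeed sits at $\alpha^h - \alpha^{h+2}$ rather than at some nearby threshold, and extracting the sharp constants $1/3$ and $8/15$. Both require a careful propagation of a small collection of manually verified entries of the distance matrices $D_{k,\alpha}(\ve)$ and $D_{k,\alpha,\infty}(\ve)$ via all six patterns, followed by geometric summations that account for the scaling hierarchy. By contrast, the reduction step and the $\otdet(\alpha)=1$ claim for $\alpha \leq 1/3$ are more structural and should be routine given Section~\ref{del:dis_matrix}.
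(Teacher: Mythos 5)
Your overall strategy mirrors the paper's: reduce to one fundamental interval of $\ve$ via Lemma~\ref{lema:det_epsilon}, evaluate the correlation integrals at distinguished radii using the patterns of Lemma~\ref{lema:postupnost_gulky}, and obtain $\otdet(\alpha)<1$ for $\alpha>1/3$ from Lemma~\ref{lema:existence_pair} together with Lemma~\ref{lema:pocet_daleke_bodky}. That last part, and the computation of the values $1$ and $8/15$ at $\ve=\alpha^h$ and $\ve=\alpha^h-\alpha^{h+2}$, are essentially the paper's argument.

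There is, however, a genuine gap in your second step. The claim that between two consecutive thresholds ``both $\tc{\infty}{f_\alpha}$ and $\tc{1}{f_\alpha}$ are constant'' is false for $\tc{1}{f_\alpha}$: on the single fundamental interval $(\alpha,1]$ with $\alpha\leq 1/3$ the paper computes $\tc{1}{f}(\alpha)=1/2$, $\tc{1}{f}(1-\alpha)=3/4$, $\tc{1}{f}(1-\alpha+\alpha^2)=7/8$, $\tc{1}{f}(1-\alpha^2)=15/16$, so $\tc{1}{f}$ passes through infinitely many values strictly between your two thresholds $\alpha$ and $1-\alpha^2$. Consequently you cannot conclude that extrema occur only at finitely many candidate radii; the correct localization argument is asymmetric. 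On $(\alpha^{h+1},\alpha^h-\alpha^{h+2}]$ only $\tc{\infty}{f}$ is constant while $\tc{1}{f}$ is non-decreasing, which makes the determinism non-increasing there and places a local minimum at the right endpoint; but on $(\alpha^h-\alpha^{h+2},\alpha^h]$ \emph{both} integrals increase, and one must separately prove that the ratio does not dip below the value attained at $\alpha^h-\alpha^{h+2}$. The paper does this with an explicit bound $\tdet{\infty}{f_k}(\ve)\geq 2^{k+1}(2^{k-1}-j_1)/((2^k-j_1)(2^k+j_1))$ followed by a monotonicity check in $j_1$; without some such quantitative estimate your enumeration of candidates is incomplete and the claim that the minima sit exactly at $\alpha^h-\alpha^{h+2}$ is unproved. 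Relatedly, the lower bound $\utdet(\alpha)\geq 1/3$ for $\alpha>1/3$ is not a ``geometric summation'' but a case analysis on the position $j=\gamma(u)$ of the boundary word relative to $2^{k-s-1}$ and $2^{k-s-1}+2^{k-s-2}$ (two regimes for $\tc{\infty}{f_k}$), each requiring its own differentiation argument; your proposal does not indicate how this would be carried out, and it is precisely where the constant $1/3$ comes from.
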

\begin{proof}
Assume that $\alpha\leq 1/3$ and $\ve\in (\alpha, 1]$, i.e.~$h = 0$. Then by Lemma~\ref{lema:det_epsilon}, the resulting minima and maxima on $(\alpha, 1]$ will be the maxima and minima on $(0, 1]$ as well. Moreover, if the minima for $\tdet{\infty}{f_{\alpha}}$ is achieved at $\ve_{\min}\in(\alpha, 1]$, then the minima are achieved at each $\alpha^m\cdot\ve_{\min}$, $m \geq 0$. The same holds for the maxima.

Let $k > 0$ and $u\in\Sigma^k$ be such that $\dis(0^k, u)\leq \ve$ and $\dis(0^k, u \pp 1) > \ve$. Let $v\in\Sigma^k$ satisfy $\dis(0^k, v)\leq \ve$ and either
\begin{itemize}
	\item $\dis(0^k, v \pp 1) > \ve$ or 
	\item $v = 0 1^{k + 1}$ and $\dis(0^k, 1 1 0^{k - 2}) > \ve$. 
\end{itemize}
Set $j = \gamma(u)$ and $j' = \gamma(v)$. From the definition of $v$, if $j\in\{2^{k - 1}, 2^{k - 1} + 2^{k - 2} + 1, 2^{k - 1} + 2^{k - 2} + 2, \ldots, 2^k\}$, then $j' = j$. Otherwise we have $j' = 2^{k - 1}$.

From Corollary~\ref{dosledok:pocet_blizke_orbity}, if $j' = 2^{k - 1}$, then the correlation integral $\tc{\infty}{f_k}(\ve) = 1/2$. If $j' = 2^{k - 1} + 2^{k - 2} + n$, $1\leq n \leq 2^{k - 2}$, then $\tc{\infty}{f_k}(\ve) = 1/2 + n/2^{k - 1}$.

Following notation from Lemma~\ref{lema:hranice_rr}, we have $j_1 = 2^k - j$ and
\begin{align*}
\frac{1}{2^{2k}}\left[2(2^{2k - 1} - j_1^2)\right]\leq \tc{1}{f_k}(\ve)\leq \frac{1}{2^{2k}}\left[2^{2k} - j_1^2\right].
\end{align*}

For the considered $\alpha$, $\dis(0^k, 0 1^{k - 1}) \leq \alpha < \dis(0 1^{k - 1}, 1 0^{k - 1})$ and $\dis(0^k, 1 0 1^{k - 2})\leq 1 - \alpha + \alpha^2 < \dis(0^2 1^{k - 2}, 1^2 0^{k - 2})$. From the patterns $A_0$ and $A_1$, we thus get $\tc{1}{f_k}(\alpha) = \tc{\infty}{f_k}(\alpha) = \tc{\infty}{f_k}(1 - \alpha + \alpha^2) = 1/2$ and $\tc{1}{f_k}(1 - \alpha + \alpha^2) = 1 - 2\cdot (2^{k - 2})^2/2^{2k}$.

Similarly, $\dis(0 1^{k - 1}, 1^k)\leq 1 - \alpha < \dis(0^k, 1 0^{k - 2} 1)$ and $\dis(0^2 1^{k - 2}, 1^k) \leq 1 - \alpha^2 < \dis(0^k, 1^2 0^{k - 3} 1)$. From the patterns $B_0$ and $C_1$, we thus have $\tc{1}{f_k}(1 - \alpha) = 1 - (2^{k - 1} - 1)^2/2^{2k}$, $\tc{\infty}{f_k}(1 - \alpha) = 1/2$, $\tc{1}{f_k}(1 - \alpha^2) = 1 - (2^{k - 2} - 1)^2/2^{2k}$ and $\tc{\infty}{f_k}(1 - \alpha^2) = 1/2 + 1/2^{k - 1}$.

For better visualization of distance matrices for these values see Figure~\ref{fig:border_cases}.

\begin{figure}[!ht]
	\centering
				\includegraphics[width=0.9\textwidth]{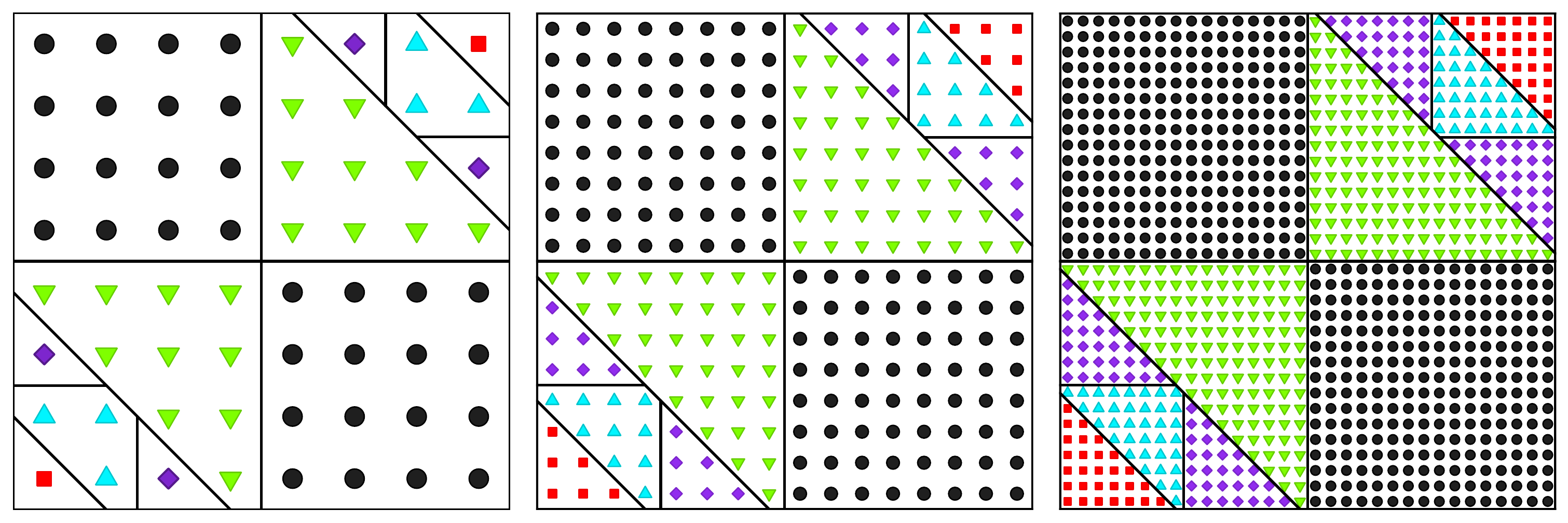}
  \caption[Examples of distance matrices for some special values]{Visualization of the distance matrix for $\alpha\leq 1/3$ and $\ve\in (\alpha, 1]$. There are black dots for $\ve = \alpha$, add green triangles pointed down for $\ve = 1 - \alpha$, add purple diamonds for $\ve = 1 - \alpha + \alpha^{2}$, add blue triangles pointed up for $\ve = 1 - \alpha^{2}$ and similarly as in the case of black dots, add red squares for $\ve = 1$. Matrices shown here are constructed for the cases $k = 3$, $k = 4$ and $k = 5$.}
    \label{fig:border_cases}
\end{figure}

From Lemma~\ref{lema:f_k_prechod_f}, we see that with $k\to\infty$,
\begin{eqnarray*}
\begin{array}{lll}
\tc{1}{f}(\alpha) = \frac{1}{2},                   &   \tc{\infty}{f}(\alpha) = \frac{1}{2},     			       				& \tdet{\infty}{f}(\alpha) = 1,\\
\tc{1}{f}(1 - \alpha) = \frac{3}{4},               &   \tc{\infty}{f}(1 - \alpha) = \frac{1}{2},    	           		& \tdet{\infty}{f}(1 - \alpha) = \frac{2}{3}, \\
\tc{1}{f}(1 - \alpha + \alpha^{2}) = \frac{7}{8},  &   \tc{\infty}{f}(1 - \alpha + \alpha^{2}) = \frac{1}{2},      	& \tdet{\infty}{f}(1 - \alpha + \alpha^{2}) = \frac{4}{7}, \\
\tc{1}{f}(1 - \alpha^{2}) = \frac{15}{16},         &   \tc{\infty}{f}(1 - \alpha^{2}) = \frac{1}{2},               	& \tdet{\infty}{f}(1 - \alpha^{2}) = \frac{8}{15}.
\end{array}
\end{eqnarray*}

The determinism cannot be larger than $1$, therefore we see that for $0 < \alpha\leq 1/3$ the maximum is at $\ve = \alpha^h$.

Since for $\ve\in[\alpha, 1 - \alpha^{2}]$ the integral $\tc{\infty}{f}(\ve)$ remains the same and $\tc{1}{f}(\ve)$ increases, determinism is decreasing for these $\ve$. It remains to show that $\tdet{\infty}{f}(\ve) > 8/15$ for every $\ve\in (1 - \alpha^{2}, 1]$. In these values $0\leq j_1 < 2^{k - 2}$, $\tc{\infty}{f_{k}}(\ve) = 1 - 2j_1/2^{k}$ and $\tc{1}{f_{k}}(\ve) \leq 1/2^{2k}\left[2^{2k} - j_1^2\right]$. Therefore
\begin{align*}
\tdet{\infty}{f_{k}}(\ve) \geq \frac{2^{k + 1}(2^{k - 1} - j_1)}{(2^k - j_1)(2^k + j_1)}.
\end{align*}
The $j_1$--differentiation and Lemma~\ref{lema:f_k_prechod_f} prove the case $\alpha\leq 1/3$.

Now, let $1/3 < \alpha < 1/2$, $0 < \ve \leq 1$, and $k > 0$ be large enough. First, we prove that $\tdet{\infty}{f}(\ve) \geq 1/3$. Let $s\geq 0$ be such that $\tdet{\infty}{f_\alpha}(\alpha^{s + 1}) = \tdet{\infty}{f_\alpha}(\alpha^{s})$, i.e.~$\alpha^s\leq (1 - 2\alpha)/\alpha$. Similarly as was noted before, by Lemma~\ref{lema:det_epsilon}, the resulting maxima and minima of $\tdet{\infty}{f_{\alpha}}$ on $(\alpha^{s + 1}, \alpha^s]$ is $\otdet(\alpha)$ and $\utdet(\alpha)$.

Let $u\in\Sigma^k$ be such that $I(0^k)$ and $I(u)$ are $\ve$--close while $I(0^k)$ and $I(u \pp 1)$ are $\ve$--distant. From the pattern $A_0$, the correlation integrals $\tc{1}{f_k}(\ve)$ are always less than or equal to $1 - (1 - j/2^k)^2 = (2^{k + 1}j - j^2)/2^{2k}$ where $j = \gamma(u)$. We show that $\tdet{\infty}{f_k}(\ve) > 1/3$.

Suppose that $2^{k - s - 1} < j\leq 2^{k - s - 1} + 2^{k - s - 2} < 2^k$. In this case $\tc{\infty}{f_{k}}(\ve) = 2^{2k - s - 1}/2^{2k} = 2^{- s - 1}$ and $\tdet{\infty}{f_k}(\ve) \geq 2^{2k - s - 1}/(2^{k + 1}j - j^2)$. Since $\tc{1}{f_{k}}(\ve)$ is increasing for these $j$, the smallest asymptotic determinism is for $j = 2^{k - s - 1} + 2^{k - s - 2}$. In this case $\tdet{\infty}{f_{k}}(\ve) \geq 2^{s + 3}/(5\cdot(2^{s + 2} - 5)) > 1/3$. For $2^{k - s - 1} + 2^{k - s - 2} < j \leq 2^{k - s}$, the integral $\tc{\infty}{f_k}(\ve) = (2j - 2^{k - s})/2^{k}$ and determinism $\tdet{\infty}{f_k}(\ve) \geq g(j) = 2^k(2j - 2^{k - s})/(2^{k + 1}j - j^2)$. A simple $j$--differentiation shows that $g$ is an increasing function of $j$. For $j = 2^{k - s - 1} + 2^{k - s - 2} + 1$, the value $g(j) > 1/3$. Therefore $\tdet{\infty}{f}(\ve) \geq 1/3$ for every $\alpha$ and every $\ve$.

Note that $j = 2^{k - s - 1} + 2^{k - s - 2}$ is obtained at $\ve\in [\alpha^{s} - \alpha^{s + 1} + \alpha^{s + 2} - \alpha^k, \alpha^{s} - \alpha^{s + 2})$. Obviously, the larger $\ve$, the larger $\tc{1}{f_k}(\ve)$. Since $\tc{\infty}{f_k}$ remains the same, the asymptotic determinism in the infinite horizon decreases for $\ve\to\alpha^{s} - \alpha^{s + 2}$. Similarly, $j = 2^{k - s - 1} + 2^{k - s - 2} + 1$ is obtained at $\ve\in[\alpha^{s} - \alpha^{s + 2}, \alpha^{s} - \alpha^{s + 2} + \alpha^{k - 1} - \alpha^k)$ and the determinism decreases for $\ve\to \alpha^{s} - \alpha^{s + 2} + \alpha^{k - 1} - \alpha^k$. The interval $(\alpha^{s} - \alpha^{s + 2} - \alpha^k, \alpha^{s} - \alpha^{s + 2} + \alpha^{k - 1} - \alpha^k)\subset (\alpha^{s} - \alpha^{s + 2} - \alpha^{k - 1}, \alpha^{s} - \alpha^{s + 2} + \alpha^{k + 1})$ for every $k$. Using Lemma~\ref{lema:f_k_prechod_f} we finish the proof that the determinism achieves its minimum at $\ve = \alpha^s - \alpha^{s + 2}$ and its value is at least $1/3$.

Now, let $h \geq 0$ and $k > 0$ be as in the proof of Lemma~\ref{lema:existence_pair}, moreover let $u, v\in\Sigma^k$ be the desired words from that lemma. Then $\tdet{\infty}{f_k} \leq 1 - 1/(2^{2k}\cdot \tc{1}{f_k}(\ve)) < 1 - 1/2^{2k}$. Let $m > 0$. By Lemma~\ref{lema:pocet_daleke_bodky}, there is at least $2^{m - 1}\cdot(1 + 2^m)$ words $u, v\in\Sigma^{k + m}$ satisfying Lemma~\ref{lema:existence_pair}. Then $\tdet{\infty}{f_{k + m}}(\ve)\leq 1 - (2^{m - 1}(1 + 2^m))/(2^{2(k + m)}\tc{\infty}{f_{k + m}}) < 1 - 1/2^{2k + 1}$. By Theorem~\ref{thm:prechod}, the determinism $\tdet{\infty}{f}(\ve) \leq 1 - 1/2^{2k + 1}$.
\end{proof}

In the proof of the next theorem it is more convenient to use the definition of determinism via recurrence rates.

\begin{dosledok}[of Theorem~\ref{thm:ostatok_body}]
\label{cor:rr_converge}
For every $x\in[0, 1]$ and $1\leq\ell\leq\infty$, the limit of $(\tRR{\ell}{f}(x, i, \ve))_i$ exists and we denote it by $\trr{\ell}{f}(x, \ve)$. Moreover, it is equal to 
\begin{itemize}
	\item $\ell\cdot \tc{\ell}{f}(x, \ve) - (\ell - 1)\cdot \tc{\ell + 1}{f}(x, \ve)$ for $\ell < \infty$ and 
	\item $\tc{\ell}{f}(x, \ve)$ for $\ell = \infty$. 
\end{itemize}
The asymptotic determinism $\tdet{\ell}{f}(x, \ve)$ equals $\trr{\ell}{f}(x, \ve)/\trr{1}{f}(x, \ve)$.
\end{dosledok}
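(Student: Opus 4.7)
The plan is essentially a one-step reduction to Theorem~\ref{thm:ostatok_body} via Lemma~\ref{lema:korelacny_rekurent}. The latter, applied pointwise in $n$, expresses the recurrence rate as a fixed linear combination of correlation sums:
\begin{equation*}
\tRR{\ell}{f}(x, n, \ve) = \ell\cdot \tC{\ell}{f}(x, n, \ve) - (\ell - 1)\cdot \tC{\ell + 1}{f}(x, n, \ve)
\qquad (1\le \ell < \infty),
\end{equation*}
and $\tRR{\infty}{f}(x, n, \ve) = \tC{\infty}{f}(x, n, \ve)$. The coefficients do not depend on $n$, so convergence of the correlation sums transfers directly to convergence of the recurrence rates.

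First I would invoke Theorem~\ref{thm:ostatok_body}, which gives, for every $x\in[0,1]$ and every $1\le\ell\le\infty$, the existence of $\lim_{n\to\infty} \tC{\ell}{f}(x, n, \ve)=\tc{\ell}{f}(x,\ve)$. Passing to the limit in the two displays above yields existence of $\trr{\ell}{f}(x,\ve)$ together with the desired formulas: for $\ell<\infty$,
\begin{equation*}
\trr{\ell}{f}(x, \ve) = \ell\cdot \tc{\ell}{f}(x, \ve) - (\ell - 1)\cdot \tc{\ell + 1}{f}(x, \ve),
\end{equation*}
and $\trr{\infty}{f}(x,\ve)=\tc{\infty}{f}(x,\ve)$.

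For the asymptotic determinism, I would use the defining identity~\eqref{eq:rec_sum}, namely $\tDET{\ell}{f}(x, n, \ve)=\tRR{\ell}{f}(x, n, \ve)/\tRR{1}{f}(x, n, \ve)$, together with the special case $\ell=1$ of Lemma~\ref{lema:korelacny_rekurent}, which gives $\tRR{1}{f}(x, n, \ve)=\tC{1}{f}(x, n, \ve)$. Since numerator and denominator both converge by the previous step, the ratio converges to $\trr{\ell}{f}(x,\ve)/\trr{1}{f}(x,\ve)$, provided the denominator's limit is positive.

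The only minor obstacle is verifying strict positivity of $\tc{1}{f}(x,\ve)$ for $\ve>0$, which is needed to take the limit of the ratio. This is handled by Theorem~\ref{thm:ostatok_body}: its two cases give $\tc{1}{f}(x,\ve)$ equal to either $\tc{1}{f}(\ve)$ or $\tc{1}{f_k}(\ve)$, and the bound $\tc{1}{f_k}(\ve)\ge 2^{-h-1}$ established in the proof of Corollary~\ref{cor:rozdiel_ck_c} (for $\alpha^{h+1}<\ve\le\alpha^h$) ensures positivity in the approximating case, which then propagates to $\tc{1}{f}(\ve)$ via Theorem~\ref{thm:prechod}.
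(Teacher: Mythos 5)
Your proposal is correct and follows exactly the route the paper intends: the corollary is stated without proof precisely because it is the combination of Lemma~\ref{lema:korelacny_rekurent} (recurrence rates as fixed linear combinations of correlation sums, with $\tRR{1}{f}=\tC{1}{f}$) and the convergence of correlation sums from Theorem~\ref{thm:ostatok_body}. Your extra check that the limit of the denominator $\tc{1}{f}(x,\ve)$ is strictly positive is a detail the paper leaves implicit, and your justification of it is sound.
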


By Corollary~\ref{thm:ostatok_body}, if $x_1\neq x_2\in X_0$, then $\tdet{\ell}{f}(x_1, \ve)$ need not be the same as $\tdet{\ell}{f}(x_2, \ve)$. By Lemma~\ref{lema:per_det_one}, Theorem~\ref{thm:determinizmus_ohranicenie_tretina} and Theorem~\ref{lema:per_det_one}, for $\ell = \infty$ there are in fact such pairs of points for which the equality does not hold. The next theorem shows that for $\ell < \infty$ such pairs may not exist.

\begin{thm}
\label{thm:det_jeden}
For every $0 < \ell < \infty$, there is $\ve_0$ satisfying $\tdet{\ell'}{f}(x, \ve) = 1$ for every $0 < \ve < \ve_0$, $0 < \ell'\leq\ell$ and $x\in X_0$.
\end{thm}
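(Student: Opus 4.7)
The plan is to reduce the theorem to showing that $\tdet{\ell'}{f_{\alpha, k}}(\ve) = 1$ for every $k \geq 0$, every $\ell' \leq \ell$, and every $\ve$ in a common $k$-independent interval $(0, \ve_0)$. This reduction is immediate from Theorem~\ref{thm:ostatok_body}, which gives $\tdet{\ell'}{f_\alpha}(x, \ve) = \tdet{\ell'}{f_{\alpha, k}}(\ve)$ for $x \in \eper(f_\alpha)$ of period $2^k$ and $\tdet{\ell'}{f_\alpha}(x, \ve) = \tdet{\ell'}{f_\alpha}(\ve)$ otherwise, combined with Theorem~\ref{thm:prechod} which handles the second case via $\tdet{\ell'}{f_\alpha}(\ve) = \lim_{k \to \infty} \tdet{\ell'}{f_{\alpha, k}}(\ve)$.

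I would take $h = \lceil \log_2 \ell \rceil$ so that $\ell \leq 2^h$, and set $\ve_0 = (1 - 2\alpha) \alpha^{h - 1}$. By Corollary~\ref{cor:rr_converge} together with a direct count of maximal diagonals in the recurrence plot, $\tdet{\ell'}{f_{\alpha, k}}(\ve) = 1$ is equivalent to the combinatorial statement that every pair $(u, v) \in \Sigma^k \times \Sigma^k$ with $\dis^\alpha(u, v) \leq \ve$ lies on some diagonal of length $\ell'$, i.e.~there is $s \in \{0, \ldots, \ell' - 1\}$ with $\dis^\alpha(u \lp (-s + m), v \lp (-s + m)) \leq \ve$ for all $m = 0, \ldots, \ell' - 1$.

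To verify this combinatorial statement, observe that if $u \neq v$ and $\dis^\alpha(u, v) \leq \ve \leq \ve_0$, then the first position $h'$ at which $u$ and $v$ differ must satisfy $h' \geq h + 1$, since Lemma~\ref{lema:trajektorie_dvoch_bodov_rozsah} yields $\dis^\alpha(u, v) > (1 - 2\alpha) \alpha^{h' - 1}$ and $h' \leq h$ would contradict $\dis^\alpha(u, v) \leq \ve_0$. So $u_1^h = v_1^h$, which forces the first $h$ coordinates of $u \lp t$ and $v \lp t$ to coincide for every $t$ and to depend on $t$ only through $t \bmod 2^h$. Consequently the carry from position $h$ into position $h+1$ in computing $u \lp t$ is identical to the one in computing $v \lp t$, so both tails evolve as $\tilde u \lp c(t)$ and $\tilde v \lp c(t)$ for a common step function $c(t)$ that is constant on epochs of length $2^h$, successive epochs being separated by a rollover of the first $h$ coordinates from $1^h$ to $0^h$. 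Throughout the epoch containing $t = 0$ one has $\dis^\alpha(u \lp t, v \lp t) = \dis^\alpha(u, v) \leq \ve$, and since $\ell' \leq 2^h$ an elementary pigeonhole places a window of length $\ell'$ containing $0$ inside this epoch, producing the required shift $s$.

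The main obstacle is the bookkeeping around the carry function $c(t)$: verifying that $c(t)$ is genuinely common to $u$ and $v$, that it increments by exactly $1$ at each rollover, and that every epoch has length precisely $2^h$ regardless of $u_1^h$. Two degenerate sub-cases are also worth noting, both trivial: $k < h$, where $\ve \leq \ve_0$ forces $u = v$, and $\ell = 1$, where the conclusion holds directly from the definition.
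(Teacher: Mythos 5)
Your proposal is correct and follows essentially the same route as the paper: the same threshold $\ve_0=(1-2\alpha)\alpha^{h-1}$ with $\ell\leq 2^h$, the same key observation that $\ve$--close orbit points share a length-$h$ prefix so that, by Lemma~\ref{lema:trajektorie_dvoch_bodov_rozsah}, their distance is constant on epochs of length $2^h$, and the same pigeonhole placement of a length-$\ell'$ window inside such an epoch. The only difference is organizational — you funnel every $x$ through the approximations $f_{\alpha,k}$ via Theorems~\ref{thm:ostatok_body} and~\ref{thm:prechod} and make the carry bookkeeping explicit, whereas the paper argues on the recurrence rates of $f$ directly, treating $x=0$, $x\notin\eper(f)$ and $x\in\eper(f)$ separately — and this does not change the substance of the argument.
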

\begin{proof}
From Corollary~\ref{cor:rr_converge}, it is sufficient to prove that for every $1\leq \ell < \infty$ there is $\ve_0 > 0$ with the property that $\tRR{\ell}{f}(x, n, \ve) = \tRR{1}{f}(x, n, \ve)$ for every $x\in [0, 1]$ and $0 < \ve \leq \ve_0$. Fix $\ell < \infty$. From the definition of $\ell$--recurrence rate, it is clear that $\tRR{\ell}{f}(x, n, r) \leq \tRR{1}{f}(x, n, \ve)$. Therefore we prove only the opposite inequality.

Let $h$ be such that $\ell\leq 2^h$ and $\ve \leq \ve_0 < (1 - 2\alpha)\cdot\alpha^{h - 1}$ be given.

First, consider $\tRR{\ell}{f}(0, n, \ve)$ and $\tRR{1}{f}(0, n, \ve)$. If we prove their equality, recurrence rates are equal for every $x\notin\eper(f)$.

Let $0\leq i, j < n$. Suppose that $d_E(f^i(0), f^j(0))\leq\ve < (1 - 2\alpha)\cdot\alpha^{h - 1}$. Thus, there are $u\in\Sigma^h$ and $v, w\in\Sigma^\infty$ such that $f^i(0) = \cores(u v)$ and $f^j(0) = \cores(u w)$. From Lemma~\ref{lema:trajektorie_dvoch_bodov_rozsah},
\begin{align*}
\dis_{k}(0^h v, 0^h w) = \dis_{2^h}(0^h v, 0^h w) = \dis(u v, u w) = d_E(f^i(0), f^j(0))
\end{align*}
for every $k\leq 2^h$. Hence there is $m\leq\min\{i, j, \ell - 1\}$ satisfying $\dis_{\ell}(u v \lm m, u w \lm m) = d_E(f^i(0), f^j(0))\leq\ve$. From the definition of $\ell$--recurrence rate~(\ref{eq:rec_sum}), we thus get $\tRR{1}{f}(0, n, \ve) = \tRR{\ell}{f}(0, n, \ve)$.

Now, let $x\in\eper(f)$ be of period $2^k$. By Corollary~\ref{cor:per_vyskyt}, $x\in\eper(f)\cap (X_k\setminus X_{k + 1})$. By Lemma~\ref{lema:periodic} and Lemma~\ref{lema:korelacny_rekurent}, we can assume that $n = 2^k$.  By Lemma~\ref{lema:per_del},
\begin{align*}
\min\{d_E(f^i(x), f^j(x))\ |\ 0\leq i\neq j < 2^k\} \geq \alpha^{k - 1}(1 - 2\alpha) + 2\alpha^{k + 1} > \alpha^{k - 1}(1 - 2\alpha).
\end{align*}
If $k \leq h$, then $\dis(f^i(x), f^j(x))\leq \ve$ implies $f^i(x) = f^j(x)$. Thus, $\tRR{\ell}{f}(x, n, \ve) = \tRR{1}{f}(x, n, \ve)$. Suppose that $k > h$. We can repeat the proof for $x = 0$, but we use the words $v, w$ of length $k - h$ instead of infinitely long words. Therefore, there are $u\in\Sigma^h$ and $v, w \in\Sigma^{k - h}$ such that $f^i(x)\in I(u v)$ and $f^j(x)\in I(u w)$ for  $0\leq i, j < n$.
\end{proof}

By results from this section, all of the functions
\begin{align*}
\mathrm{c}: (0, 1/2)\times[0, 1]\times(0, 1]\times(\NNN\cup\{\infty\})&\to [0, 1]\\
		(\alpha, x, \ve, \ell)&\mapsto \lim_{n\to\infty}\tC{\ell}{f_{\alpha}}(x, n, \ve)\\
\mathrm{det}: (0, 1/2)\times[0, 1]\times(0, 1]\times(\NNN\cup\{\infty\})&\to [0, 1]\\
		(\alpha, x, \ve, \ell)&\mapsto \lim_{n\to\infty}\tDET{\ell}{f_{\alpha}}(x, n, \ve)
\end{align*}
are well defined. None of $\mathrm{c}(\cdot, x, \ve, \ell)$, $\mathrm{c}(\alpha, \cdot, \ve, \ell)$, $\mathrm{c}(\alpha, x, \cdot, \ell)$, and $\mathrm{det}(\cdot, x, \ve, \ell)$, $\mathrm{det}(\alpha, \cdot, \ve, \ell)$, $\mathrm{det}(\alpha, x, \cdot, \ell)$ is continuous in general. If $\ve = 1$, all correlation integrals and asymptotic determinisms are equal to $1$ and therefore these are continuous. Suppose that $\ve\neq 1$. If the argument is 
\begin{itemize}
	\item $\alpha$, then the functions are continuous for $x\in\{0, 1\}$;
	\item $x$, then the functions are continuous for $\ell < \infty$ and $\ve$ small enough;
	\item $\ve$, then the functions are continuous for $x\notin\eper(f_{\alpha})$. 
\end{itemize}
Thus $\mathrm{c}(\cdot, 0, \cdot, \ell)$ and $\mathrm{det}(\cdot, 0, \cdot, \ell)$ are continuous functions for every $l\leq\infty$ on the set $(0, 1/2)\times (0, 1]$.

By the previous theorem, it is not interesting to study functions $\utdet_\ell(\alpha) = \lim\inf_{\ve\to 0^{+}}\tdet{\ell}{f_{\alpha}}(\ve)$ and $\otdet_\ell(\alpha) = \lim\sup_{\ve\to 0^{+}}\tdet{\ell}{f_{\alpha}}(\ve)$ for $\ell < \infty$. This corresponds to the original hypothesis about measures of predictability. We have already seen that the hypothesis is not true for $\ell = \infty$. The remainder of this paper will be devoted to the study of functions $\utdet$ and $\otdet$, that is we will show how wrong the hypothesis was.

\begin{lema}
\label{lema:liminf_conti}
Functions $\utdet, \otdet$ are continuous.
\end{lema}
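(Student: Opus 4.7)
The plan is to express $\utdet$ and $\otdet$ as parameterized extrema of a jointly continuous function over a continuously varying compact set, and then invoke the standard continuity of such extrema.

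The first step is a reduction via log-periodicity. From Lemma~\ref{lema:det_epsilon} the identity $\tdet{\infty}{f_\alpha}(\alpha\ve) = \tdet{\infty}{f_\alpha}(\ve)$ valid on $\ve \leq (1-2\alpha)/\alpha$ tiles the interval $(0,\,(1-2\alpha)/\alpha]$ by copies of the fundamental domain $J(\alpha) = [1-2\alpha,\,(1-2\alpha)/\alpha]$, and $\tdet{\infty}{f_\alpha}$ takes exactly the same set of values on each tile. Combined with continuity in $\ve$ from Theorem~\ref{thm:prechod} and compactness of $J(\alpha)$, the liminf/limsup at $0^+$ collapses into attained extrema:
\[
\utdet(\alpha) \;=\; \min_{\ve \in J(\alpha)} \tdet{\infty}{f_\alpha}(\ve), \qquad \otdet(\alpha) \;=\; \max_{\ve \in J(\alpha)} \tdet{\infty}{f_\alpha}(\ve).
\]
(When $\alpha < 1/3$ so that $(1-2\alpha)/\alpha > 1$, the portion of $J(\alpha)$ lying in $(1,\infty)$ trivially gives $\tdet{\infty}{f_\alpha} = 1$ and can be truncated; the effective interval $[1-2\alpha,\,\min(1, (1-2\alpha)/\alpha)]$ still has endpoints continuous in $\alpha$.)

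The core technical step is joint continuity of $F(\alpha,\ve) := \tdet{\infty}{f_\alpha}(\ve)$ on any relatively compact subset of $(0,1/2) \times (0,1)$. Fix $(\alpha_0,\ve_0)$ and $\varepsilon > 0$. By Corollary~\ref{cor:rozdiel_ck_c}, the approximation $|F - F_k| \leq 24/(2^{k-h-1}-8)$ (where $F_k := \tdet{\infty}{f_{\alpha,k}}$ and $\ve \in (\alpha^{h+1},\alpha^h]$) is uniform on a compact neighborhood of $(\alpha_0,\ve_0)$ since the integer $h$ remains bounded there; choose $k$ so this error is below $\varepsilon/5$. For this fixed $k$, the function $F_k$ is determined by the distance matrix $D_{k,\alpha,\infty}(\ve)$, whose entries change only across the finitely many ``threshold curves'' $\{\dis_\infty^\alpha(u,v) = \ve\}$ with $u,v \in \Sigma^k$, each being the graph of a polynomial in $\alpha$ by~(\ref{eq:gap_seq_dis}). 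The bump trick from the proof of Lemma~\ref{lema:int_conti} supplies $\ve_0' > \ve_0$ avoiding every threshold $\dis_\infty^{\alpha_0}(u,v)$ and satisfying $|F(\alpha_0,\ve_0') - F(\alpha_0,\ve_0)| < \varepsilon/5$ by Theorem~\ref{thm:prechod}; on a joint neighborhood of $(\alpha_0,\ve_0')$ the matrix is constant, so $F_k$ is constant equal to $F_k(\alpha_0,\ve_0')$ there. A symmetric choice of $\ve_0'' < \ve_0$ handles the lower side. Chaining triangle inequalities through $F_k$ and these bumped values yields $|F(\alpha,\ve) - F(\alpha_0,\ve_0)| < \varepsilon$ for all $(\alpha,\ve)$ sufficiently close to $(\alpha_0,\ve_0)$.

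With joint continuity in hand, continuity of $\utdet$ and $\otdet$ is routine: reparametrize the fundamental domain by a fixed compact interval via $\ve(\alpha,y) = (1-2\alpha)\alpha^{-y}$ for $y \in [0,1]$, set $H(\alpha,y) = F(\alpha,\ve(\alpha,y))$ (which is jointly continuous on $(0,1/2)\times[0,1]$), and note that $\utdet(\alpha) = \min_{y \in [0,1]} H(\alpha,y)$ and $\otdet(\alpha) = \max_{y \in [0,1]} H(\alpha,y)$ are continuous in $\alpha$ by the standard parameterized-extremum theorem. The main obstacle is the joint continuity itself: the approximants $F_k$ are discontinuous, jumping by $O(1/2^k)$ at the threshold curves, so one cannot simply take a uniform limit of continuous functions — the bump trick combined with the uniform $O(1/2^k)$ approximation is the delicate ingredient that circumvents these jumps.
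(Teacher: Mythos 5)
Your proof is essentially correct, but it reaches the conclusion by a genuinely different route than the paper. The paper's proof also starts from joint (uniform) continuity of $(\alpha, \ve)\mapsto\tdet{\infty}{f_{\alpha}}(\ve)$ on compact rectangles, but then finishes by invoking Theorem~\ref{thm:determinizmus_ohranicenie_tretina}: the infimum is \emph{attained} at the explicitly known points $\ve = \alpha^{h} - \alpha^{h+2}$, so $\utdet(\alpha) = \tdet{\infty}{f_{\alpha}}(\alpha^{h}-\alpha^{h+2})$ is a composition of continuous functions and continuity is immediate, with no appeal to a parameterized-extremum theorem. You instead represent $\utdet$ and $\otdet$ as $\min$/$\max$ of a jointly continuous function over a continuously varying compact fundamental domain supplied by Lemma~\ref{lema:det_epsilon}. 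Your route buys robustness: you never need to locate the extremizing $\ve$, which matters for $\otdet$ when $\alpha > 1/3$, where the paper's ``similarly'' is thin because Theorem~\ref{thm:determinizmus_ohranicenie_tretina} pins down the maximizer only for $\alpha\leq 1/3$. You also treat the joint continuity more honestly: the paper derives it from Theorem~\ref{thm:prechod} and Lemma~\ref{lema:int_conti} as if separate continuity implied joint uniform continuity, which is only justified because the \emph{proofs} of those results give locally uniform estimates; your explicit chaining through $F_k$, the uniform $O(2^{-k})$ bound of Corollary~\ref{cor:rozdiel_ck_c}, and the bump trick is exactly the missing justification. The price of your route is that each of these steps must be carried out in detail, and your sketch of the middle link ($|F_k(\alpha,\ve)-F_k(\alpha_0,\ve_0')|$ small when $(\alpha,\ve)$ is near $(\alpha_0,\ve_0)$ rather than near $(\alpha_0,\ve_0')$) still needs Lemma~\ref{lema:f_k_prechod_f} together with the lower bound $\tc{1}{f_k}(\ve)\geq 2^{-h-1}$ to control the ratio across the intervening threshold crossings.

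One local slip to fix: for $\alpha < 1/3$ you cannot simply truncate $J(\alpha)=[1-2\alpha,(1-2\alpha)/\alpha]$ at $1$, since $\tdet{\infty}{f_{\alpha}}$ is not defined (and certainly not identically $1$) beyond $\ve = 1$, and the truncated interval $[1-2\alpha,1]$ is a proper subset of a fundamental domain, so its minimum may miss values attained on $(\alpha, 1-2\alpha)$. The clean fix is to shift down by one factor of $\alpha$ and use $[\alpha(1-2\alpha),\,1-2\alpha]$, equivalently $\ve(\alpha,y)=(1-2\alpha)\alpha^{1-y}$ for $y\in[0,1]$; this interval lies in $(0,1)$ for every $\alpha\in(0,1/2)$, has endpoints continuous in $\alpha$, and is a genuine fundamental domain by Lemma~\ref{lema:det_epsilon}, after which the rest of your argument goes through unchanged.
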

\begin{proof}
From Theorem~\ref{thm:prechod} and Lemma~\ref{lema:int_conti}, the determinism is the uniformly continuous function of $\alpha$ and $\ve$ at $J_{\alpha}\times J_{\ve}$ where $J_{\ve}\subset (0, 1]$ and $J_{\alpha}\subset (0, 1/2)$ are compact intervals. Thus for $\varepsilon > 0$ there is $\delta > 0$ with the property that if $\diam(J_{\ve}) < \delta$ and if $\diam(J_{\alpha}) < \delta$, then for $(\alpha_1, \ve_1), (\alpha_2, \ve_2)\in J_{\alpha}\times J_{\ve}$ the difference of determinisms $|\tdet{\infty}{f_{\alpha_1}}(\ve_1) - \tdet{\infty}{f_{\alpha_2}}(\ve_2)|$ is less than $\varepsilon$.

Fix $\alpha$ and let $h\geq 0$ be such that $\alpha^h < 1/\alpha\cdot(1 - 2\alpha)$. Choose $0 < \delta_h < \delta/2$ to satisfy $(\alpha + \delta)^h < 1/(\alpha + \delta_h)\cdot(1 - 2(\alpha + \delta_h))$ and $(\alpha + \delta_h)^h - (\alpha - \delta_h)^{h + 1} < \delta$. Put $J_{\ve} = [(\alpha - \delta_h)^{h + 1}, (\alpha + \delta_h)^h]$ and $J_{\alpha} = [\alpha - \delta_h, \alpha + \delta_h]$. Let $\alpha_1, \alpha_2\in J_{\alpha}$ and $\ve_1 = \alpha^h_1 - \alpha^{h + 2}_1, \ve_2 = \alpha^h_2 - \alpha^{h + 2}_2\in J_{\ve}$. Then $|\ve_2 - \ve_1| < \delta$. Moreover, $\utdet(\alpha_1) = \tdet{\infty}{f_{\alpha_1}}(\ve_1)$ and $\utdet(\alpha_2) = \tdet{\infty}{f_{\alpha_2}}(\ve_2)$. From previous assumptions, $|\utdet(\alpha_1) - \utdet(\alpha_2)| < \varepsilon$. Similarly, $|\otdet(\alpha_1) - \otdet(\alpha_2)| < \varepsilon$.
\end{proof}

Let $\varepsilon > 0$. From Corollary~\ref{cor:rozdiel_ck_c}, choosing an even $k > 0$ such that for every $\alpha\in (0, 1/2)$, $h = k/2$ and $\ve \in (\alpha^{h + 1}, \alpha^h]$, we have
\begin{align*}
\left|\tdet{\infty}{f}(\ve) - \tdet{\infty}{f_k}(\ve)\right| \leq \frac{24}{2^{k - h - 1} - 8} = \frac{24}{2^{k/2 - 1} - 8} < \frac{\varepsilon}{2}.
\end{align*}
By an easy computation, $2^{-k/2 + 6} < \varepsilon$. Let $\alpha$ be such that $1 - 2\alpha < \alpha^k$. 

Let $\tdet{\infty}{f_k}(\ve) < d + \varepsilon/2$, then
\begin{align*}
\left|\tdet{\infty}{f}(\ve) - d\right|\leq \left|\tdet{\infty}{f}(\ve) - \tdet{\infty}{f_k}(\ve)\right| + \left|\tdet{\infty}{f_k}(\ve) - d\right| < \frac{\varepsilon}{2} + \frac{\varepsilon}{2} = \varepsilon.
\end{align*}
Hence, for the purpose of the next corollary it is sufficient to show that
\begin{itemize}
	\item $\tdet{\infty}{f_k}(\ve) < 1/2 + \varepsilon/2$ for all small enough $\ve$ and
	\item $\tdet{\infty}{f_k}(\ve) < 1/3 + \varepsilon/2$ for some $\ve > 0$.
\end{itemize}
For $\alpha$ considered here, we thus get $\otdet(\alpha) < 1/2 + \varepsilon$ and $\utdet(\alpha) < 1/3 + \varepsilon$.

Using~(\ref{eq:gap_seq_dis}), for every $u\in\Sigma^k$ where $u \leq 0 1^{k - 1}$, we get
\begin{align*}
\dis(0 1^{k - 1}, 0 1^{k - 1} \pp (\gamma(u) - 2)) \leq \dis(0^k, 0^k \pp (\gamma(u) - 2)) + \alpha^k < \dis(0^k, 0^k \pp (\gamma(u) - 1)).
\end{align*}
As $\alpha^{h - 1} < \alpha^{k - 1}$, we have $\dis(0^h 1^{k - h}, 0^h 1^{k - h} \pp \gamma(u))\geq \dis(0^k, 0^k \pp \gamma(u))$ for every $u\leq 0^h 1^{k - h}$.

Since $A(2^{m_1}, j) \leq A(2^{m_2}, j)$ where $m_1\leq m_2$ and $j \leq 2^{m_1}$, it follows that
\begin{align*}
\dis(0 1^{k - 1}, 0 1^{k - 1}\pp \gamma(u))	&\geq \dis(0^2 1^{k - 2}, 0^2 1^{k - 2} \pp \gamma(u)) \geq \ldots \geq \\
																						&\geq \dis(0^h 1^{k - h}, 0^h 1^{k - h} \pp \gamma(u))\geq\dis(0^k, 0^k\pp\gamma(u))
\end{align*}
for every $u \leq 0^h 1^{k - h}$. Put $j = \gamma(u)$. If $\dis(0^k, 0^k \pp (\gamma(u) - 1))\leq \ve$ and if $\dis(0^k, 0^k \pp \gamma(u)) > \ve$, then from patterns $A_0$, $A_1$, $B_1$, $C_1$ and Corollary~\ref{dosledok:pocet_blizke_orbity},
\begin{align*}
\tc{1}{f_k}(\ve)\geq \frac{1}{2^{2k}}&\cdot\left(2^{k + 1}j + 3j - j^2 - 3\cdot 2^k - 2\right) \geq 2^{- h - 1}\quad\text{and}\quad
\tc{\infty}{f_k}(\ve)\leq \frac{2^k j}{2^{2k}}.
\end{align*}
Now, we want to show that $\tdet{\infty}{f_k}(\ve) < 1/2 + \varepsilon/2$ for every $\ve\in (\alpha^{h + 1}, \alpha^h]$ (and by Lemma~\ref{lema:det_epsilon}, for all $\ve\leq\alpha^h$). From previous inequalities, we thus want to show that
\begin{align*}
\tdet{\infty}{f_k}(\ve)\leq\frac{2^k j}{2^{k + 1}j + 3j - j^2 - 3\cdot 2^k - 2} & < \frac{1}{2} + \frac{\varepsilon}{2}\\
j^2 + 3\cdot 2^k + 2 & < 3j + \varepsilon(2^{k + 1}j + 3j - j^2 - 3\cdot 2^k - 2).
\end{align*}
From assumptions, $\varepsilon > 2^{-k/2 + 6}$, $j \geq 2^{k/2 - 1}$, $2^{k + 1}j + 3j - j^2 - 3\cdot 2^k - 2 \geq 2^{3k/2 - 1}$, and $j^2 \leq 2^k$. Thus
\begin{align*}
3j + \varepsilon(2^{k + 1}j + 3j - j^2 - 3\cdot 2^k - 2) > 3\cdot 2^{k/2 - 1} + 2^{k + 5} > 2^{k + 2} + 2 > j^2 + 3\cdot 2^k + 2.
\end{align*}
\newline

We now show that $\utdet$ not only goes below $1$, but also can be arbitrarily close to $1/3$. Let $\ve = \alpha^h - \alpha^{h + 2} - \alpha^{h + 1}\cdot (1 - 2\alpha)$. Then,
\begin{align*}
&D_k(\ve)[1, 2^{k - h} + 2^{k - h - 2}] = D_k(\ve)[2^{k - 1}, 2^{k - 1} + 2^{k - h} - 2^{k - h - 2} - 1] = 1 \qquad and \\
&D_k(\ve)[1, 2^{k - h} + 2^{k - h - 2} + 1] = D_k(\ve)[2^{k - 1}, 2^{k - 1} + 2^{k - h} - 2^{k - h - 2}] = 0.
\end{align*}
Using patterns $A_1$, $B_1$, $C_1$, the correlation integral $\tc{1}{f_k}(\ve)$ is greater than or equal to $1/2^{2k}\cdot\left[10\cdot 2^{3/2k - 2} - 41\cdot 2^{k - 4} + 5\cdot 2^{k/2 - 2}\right]$. The correlation integral in the infinite horizon $\tc{\infty}{f_k}(\ve)$ is equal to $1/2^{2k}\cdot\left[2^{3/2k - 1}\right]$ (Corollary~\ref{dosledok:pocet_blizke_orbity}). We now show that
\begin{align*}
\tdet{\infty}{f_k}(\ve) \leq \frac{8\cdot 2^{3/2k}}{40\cdot 2^{3/2k} - 41\cdot 2^k + 20\cdot 2^{k/2}} < \frac{1}{3} + \frac{\varepsilon}{2}.
\end{align*}
By an easy computation, we have to prove that
\begin{align*}
2^k\cdot(41 + 123/2\varepsilon) < 16\cdot 2^{3/2k} + 20\cdot 2^{k/2} + 60\cdot 2^{3/2k}\varepsilon + 15/2\cdot2^{k/2}\varepsilon.
\end{align*}
Using $2^{-k/2 + 6} < \varepsilon \leq 2/3$, $82 < 2^7$, and $60 > 2^5$, the statement follows.

\begin{dosledok}
\label{cor:liminf}
For all $\overline{d}\in (1/2, 1], \underline{d}\in (1/3, 8/15]$ there are $\overline{\alpha}, \underline{\alpha}\in (0, 1/2)$ satisfying $\otdet(\overline{\alpha}) = \overline{d}$ and $\utdet(\underline{\alpha}) = \underline{d}$.
\end{dosledok}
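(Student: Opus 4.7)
The plan is a direct application of the intermediate value theorem to the continuous functions $\utdet$ and $\otdet$ on the open interval $(0,1/2)$, whose continuity is the content of Lemma~\ref{lema:liminf_conti}. It suffices to exhibit, for each of the two functions separately, two parameters in $(0,1/2)$ whose images straddle the prescribed value $\underline{d}$, respectively $\overline{d}$.

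For the \emph{upper} end of each target range, I would invoke Theorem~\ref{thm:determinizmus_ohranicenie_tretina}, which records that for every $\alpha \leq 1/3$ we have $\utdet(\alpha) = 8/15$ and $\otdet(\alpha) = 1$. Setting $\alpha_0 = 1/3$ thus gives $\utdet(\alpha_0) = 8/15 \geq \underline{d}$ for every $\underline{d} \in (1/3, 8/15]$ and $\otdet(\alpha_0) = 1 \geq \overline{d}$ for every $\overline{d} \in (1/2, 1]$.

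For the \emph{lower} end, the quantitative estimates carried out in the paragraphs immediately preceding the statement already do all the work. Given $\varepsilon > 0$ with $\varepsilon < \min\{\underline{d} - 1/3,\, \overline{d} - 1/2\}$, choose an even $k > 0$ large enough that $24/(2^{k/2 - 1} - 8) < \varepsilon/2$ and $2^{-k/2 + 6} < \varepsilon$, and then pick $\alpha_1 \in (1/3, 1/2)$ close enough to $1/2$ that $1 - 2\alpha_1 < \alpha_1^k$. The discussion preceding the corollary then yields $\utdet(\alpha_1) < 1/3 + \varepsilon < \underline{d}$ and $\otdet(\alpha_1) < 1/2 + \varepsilon < \overline{d}$.

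Finally, $\utdet$ and $\otdet$ are continuous on the compact interval $[\alpha_0, \alpha_1] \subset (0, 1/2)$, so the intermediate value theorem produces $\underline{\alpha},\, \overline{\alpha} \in (\alpha_0, \alpha_1)$ with $\utdet(\underline{\alpha}) = \underline{d}$ and $\otdet(\overline{\alpha}) = \overline{d}$. The main obstacle is not in this final step, which is routine, but in the quantitative estimates proving $\utdet(\alpha) \to 1/3$ and $\otdet(\alpha) \to 1/2$ as $\alpha \to 1/2^-$; these have already been settled by the distance-matrix computations leading up to the corollary, so the corollary itself is essentially an IVT wrap-up.
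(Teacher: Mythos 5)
Your proposal is correct and follows essentially the same route as the paper: the paper leaves the corollary without a separate proof precisely because the quantitative estimates in the preceding paragraphs give $\utdet(\alpha) < 1/3 + \varepsilon$ and $\otdet(\alpha) < 1/2 + \varepsilon$ for $\alpha$ near $1/2$, while Theorem~\ref{thm:determinizmus_ohranicenie_tretina} gives the values $8/15$ and $1$ for $\alpha \leq 1/3$, and Lemma~\ref{lema:liminf_conti} supplies the continuity needed for the intermediate value theorem. Your writeup simply makes this IVT wrap-up explicit, with the half-open target intervals handled correctly by the choice of $\varepsilon$.
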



\section*{Acknowledgements}
The author would like to thank Vladim\'ir \v Spitalsk\'y and Mari\'an Grend\'ar for their helpful suggestions
and comments during discussions.

The work was supported by Matej Bel University, the Slovak Grant Agency under the grant number VEGA~1/0786/15. Also, the author gratefully acknowledges support from Slovanet, a.s..


\end{document}